\documentclass[reqno]{amsart}

\usepackage{natbib}
\usepackage{amstext}
\usepackage{geometry}
\usepackage{ts_def_share}

\usepackage{enumerate}

\usepackage[colorlinks,urlcolor=red,citecolor=blue,linkcolor=red]{hyperref}
 
\DeclareMathOperator{\Real}{Re}
\DeclareMathOperator{\supp}{supp}
\DeclareMathOperator{\conv}{conv}
\newcommand{\Id}{\text{Id}}

\RequirePackage{colortbl}
\definecolor{tscolor}{rgb}{1.0,0.6,0.0}
\definecolor{bgcolor}{rgb}{0.5,0.2,0.7}

\usepackage[ulem=normalem,commandnameprefix=ifneeded]{changes}      % do not show them in final version 
\definechangesauthor[name = Thorsten Schmidt, color=tscolor]{TS}  %{green}
\definechangesauthor[name = Boris Günther, color=bgcolor]{BG}

\title{Path-dependent Affine Processes}
\date{\today}
\author{Boris G\"unther}
\address{Institute of Mathematics, University of Gie{\ss}en}

\author{Thomas Kruse}
\address{Department of Mathematics \& Informatics, University of Wuppertal}

\author{Ludger Overbeck}
\address{Institute of Mathematics, University of Gie{\ss}en}

\author{Thorsten Schmidt}
\address{Department of Mathematical Stochastics, University of Freiburg}

\begin{document}

\begin{abstract}
    We extend the classical theory
    of affine processes to a path-dependent setting by introducing path-dependent coefficients and provide analytic formulas for their Fourier--Laplace transform in terms of generalized Riccati-type equations.
    In the proposed framework, we define path-dependent affine processes through their exponential-affine Fourier--Laplace transform on the path space and establish a characterization theorem. 
    Conversely, for path-dependent stochastic differential equations with affine path-dependent coefficients, we also provide explicit exponential-affine representations of the Fourier--Laplace functional in terms of those Riccati equations. 
    Moreover, we derive a condition ensuring non-negativity of the path-dependent diffusion coefficient, guaranteeing well-posedness of the model. Finally, we apply these results to a path-dependent volatility model and a path-dependent extension of the Heston model, including a delayed Heston model as a special case.
\end{abstract}

\keywords{Affine processes, path-dependent semimartingales, generalized Riccati equations, non-Markovian volatility.}

\maketitle

\section*{Introduction}

Affine stochastic processes are one of the most widely used frameworks for constructing tractable multifactor models.
Their key feature is that, despite being non-Gaussian in general, they remain analytically tractable in the sense that their characteristic function is characterized by ordinary differential or integral equations of Riccati type. This analytical structure enables the efficient computation of many quantities of interest and is one of the core reasons why affine processes are used throughout mathematical finance.

The earliest applications of affine processes can be traced back to the early 1950s in the field of biology, where they appeared in the study of continuous state branching processes and Feller diffusions, \cite{feller_Diffusion_1951, lamperti_Continuous_1967}. Their first use in finance appeared in the modelling of interest rate term structures in \cite{cox_Theory_1985} and later also in stochastic volatility modeling, for example in the celebrated model of \cite{heston_ClosedForm_1993}.
A first systematic treatment of affine processes and their applications for finance was given in the seminal work of \cite{duffie_Affine_2003}.

Since then, the theory of affine processes has been extended in many directions, including
matrix valued processes on more general state spaces such as the cone of positive semidefinite matrices,
as well as infinite-dimensional affine processes and measure-valued extensions
(see e.g. \cite{keller-ressel_Affine_2019,cuchiero_Affine_2011,cuchiero_Affine_2016,schmidt_Infinite_2020, cuchiero_Measurevalued_2024}) to mention a few.

In these approaches,
the underlying process is often assumed to be Markovian. A notable exception %Most notably 
is the recent extension to affine stochastic Volterra equations, initiated in \cite{abijaber_Affine_2019} and extended to the inhomogeneous case in \cite{ackermann_Inhomogeneous_2022}.
In this  setting, which is in general neither Markovian nor a semimartingale, the authors manage to show that if the
coefficients of the underlying Volterra process are affine, then the process still admits a Fourier--Laplace transform of exponential-affine form. This transform is characterized by a system of Riccati-Volterra integral equations. Due
to the path-dependence induced by the Volterra kernel, the Fourier--Laplace transform is affine with respect to the past trajectory of the underlying process rather than only in its current state.

Motivated by these observations, the aim of this paper is to characterize a class of path-dependent processes whose Fourier--Laplace transforms depend on the past trajectory in an affine way. More precisely, we consider a $d$-dimensional path-dependent process of the form
\begin{equation}\label{eq:intro-path-dep}
    X(t) = X(0) + \int_{0}^{t} b(s,X_{[0,s]}) \, ds + \int_{0}^{t} \sigma(s,X_{[0,s]}) \, dW(s),
\end{equation}
where $W$ is an $m$-dimensional Brownian motion;
here $X(t)$ denotes the value of the process at time $t \in [0,T]$, while $X_{[0,t]} = \{X(s) : 0 \leq s \leq t\}$ denotes the past trajectory up to time $t$. In contrast to the classical affine setting, the coefficients $b$ and $\sigma$ are allowed to depend on the entire past path of the process rather than only on its current state. To capture this structure, we  formulate the affine property directly on the path space.

In Section \ref{sec:path-dependent-affine}, we study processes whose Fourier--Laplace transform is exponentially affine with respect to the past trajectory, i.e.
\begin{equation}\label{eq:intro-exp-affine}
    E\Bigl[\exp\Bigl(\int_{[0,T]} X(s) \, \mu(ds)\Bigr) \mid \ccF_t \Bigr] = \exp \Bigl(\phi(t) + \int_{[0,t]} X(s) \, \Psi(t,ds) \Bigr), \quad t \in [0,T],
\end{equation}
for admissible measures $\mu$ and sufficiently regular functions $\phi$ and $\Psi$.
We show that processes with this property necessarily possesses coefficients $b(t,x)$ and $a(t,x) = \sigma(t,x)\sigma(t,x)^\top$ that are affine of the form
\begin{equation}\label{eq:intro-coeff}
    b(t,x_{[0,t]}) = b^0(t) + \sum_{i = 1}^{d} \int_{[0,t]} x^i(s) \, b^i(t,ds), \quad a(t,x_{[0,t]}) = a^0(t) + \sum_{i = 1}^{d} \int_{[0,t]} x^i(s) \, a^i(t,ds),
\end{equation}
for all $t \in [0,T]$ and $x \in C([0,T],\R^d)$.
Here, $b^0$ and $a^0$ are $\R^d$-valued functions on $[0,T]$, while for each $i \in \{1,\dotsc,d\}$, $b^i$ and $a^i$ are finite signed measures taking values in $\R^d$ and in the space of symmetric matrices respectively.
Moreover, as in the classical theory, the functions $\phi$ and $\Psi$ appearing in \eqref{eq:intro-exp-affine} are characterized by solutions to a system of Riccati-type integral equations determined by those coefficients.

In Section \ref{sec:reverse-direction}, we study the reverse direction and show that affine
coefficients of the form \eqref{eq:intro-coeff} are sufficient to establish the exponential-affine
transform formula \eqref{eq:intro-exp-affine}, given that the functions $\phi$ and $\Psi$ solve the
associated Riccati-type equations.
In Section \ref{section:existence}, we turn to the question of existence and well-posedness of the
path-dependent stochastic integral equation \eqref{eq:intro-path-dep}.
First, we establish the existence of weak solutions under linear growth conditions on the coefficients.
In addition, we derive conditions ensuring that the affine diffusion coefficient $\sigma \sigma^\top = a$ remains non-negative, which guarantees well-posedness of the process.
Finally, in Section \ref{sec:examples} we apply our results to two concrete path-dependent models:
first, we construct a model with a genuine path-dependent volatility,
where the instantaneous variance depends on the difference of the current level of the process
from a moving average of its recent trajectory over a rolling window.
We show that well-posedness of this model is ensured under a drift condition derived in Section \ref{section:existence}.
Second, we consider a path-dependent extension of the Heston model in which the stochastic volatility process retains the classical square-root diffusion structure,
but its drift is allowed to depend on the past trajectory.
For both models we explicitly show that the associated Riccati equations do admit unique solutions and derive the specific form of the Fourier--Laplace formula.

\section{Preliminaries}\label{sec:preliminaries}

Consider a finite time horizon  $T \in (0, \infty)$ and a filtered probability space $(\Omega,\ccF, (\ccF_t)_{t \in [0,T]},P)$ satisfying the usual conditions and supporting an  $m$-dimensional Brownian motion $W$. For the tools from semimartingale calculus we refer to \cite{JacodShiryaev}.

We briefly recall the notion of path-dependent processes and introduce the relevant path spaces.
Technical details of the functional It\^o-calculus, such as horizontal and vertical derivatives, are delegated to Appendix~\ref{app:functional_ito}.
While our main tool is a semimartingale representation which we provide in the next section, the functional It\^o formula serves as a guiding example showing how such a semimartingale representation naturally arises in the path-dependent setting.

Let $x \in C([0,T],\R^d)$. For each $t \in [0,T]$, we define the stopped path
$$x_{[0,t]} = (x(s))_{s \in [0,t]} \in C([0,t],\R^d)$$
and analogously for half open intervals by $x_{[0,t)} = (x(s))_{s \in [0,t)} \in C([0,t),\R^d)$.
For continuous paths, $x_{[0,t)}$ uniquely determines $x(t)$ via
$\lim_{s \uparrow t} x(s) = x(t)$, but we keep both notions to distinguish
whether the value at time $t$ is included.
In order for us to consider dual pairings with measures on $[0,T]$, we identify $x_{[0,t]}$ with its zero extension to $[0,T]$ via
$x\Ind_{[0,t]}$,
and analogously with $x\Ind_{[0,t)}$ for $x_{[0,t)}$.

For path-dependent
models on a state space $S\subset \R^d$ it is natural to consider
$$\ccC_T(S)%([0,T],\R^d)
    := \{(t,x): t \in [0,T],\ x \in C([0,t],S)\},$$
i.e.\ the collection of continuous paths stopped at time $t$ together with their time parameter.
If $S=\R^d$ we simply write $\ccC_T^d$.
The space $\ccC_T(S)$ can also be interpreted as a vector bundle over time, i.e.\ $\ccC_T(S)  = \cup_{t \in [0,T]} \{t\} \times C([0,t],S).$
We equip $\ccC_T(S)$ with the metric
$$d_{\ccC}\big( (t,x),(t',x') \big) = |t-t'| + \sup_{u \in [0,T]} \| x(u \wedge t) - x'(u \wedge t') \|,$$
which induces the same topology as the stopped-path space discussed in Appendix \ref{app:functional_ito},
but does not distinguish jumps. Since we will not have to consider vertical perturbations of continuous paths as in the functional It\^o theory, it suffices to work on $C([0,T],S)$ which we view as a subspace of the Skorokhod space $D([0,T],S)$.
A \emph{non-anticipative functional} is a family of measurable functions $F=(F_t)_{t \in [0,T]}$ such that
$$F_t : C([0,t],S) \to \R^d, \quad t \in [0,T].$$

Equivalently, a non-anticipative functional may be viewed as map $F:\ccC_T(S) \to \R^d$ by considering $ F(t,x) = F_t(x). $
Continuity is always understood with respect to the metric $d_{\ccC}$.

Since our aim is to evaluate the Fourier--Laplace transforms of entire paths rather than terminal values, we recall that the dual of continuous functions can be identified with the space of finite signed Borel measures.
The extension to $d$ dimensions and complex values can be done as follows:  consider signed complex vector-valued (Borel) measures of bounded variation on $[0,T]$, i.e.\  countably additive functions on the Borel sigma field over $[0,T]$ with values in $\C^d$ and finite total variation.
We denote the space of these measures by $\ccM([0,T], \C^d)$ and refer to \cite{weaver_Measure_2013} for further details. Equipped with the total variation norm\footnote{Here, the total variation of a complex vector-valued measure $\mu$ on $[0,T]$ is defined as
$\|\mu\|_{TV} := \sup_{\pi} \sum_{A \in \pi} \|\mu(A)\|,$
where the supremum is taken over all countable, measurable, disjoint partitions $\pi$ of $[0,T]$.},
the space $\ccM([0,T], \C^d)$ is a Banach space.

For $x \in C([0,T],\R^d)$ and $\mu \in \ccM([0,T],\C^d)$ the dual pairing is defined for any $t \in [0,T]$
by
\begin{equation*}
    \mu \cdot x_{[0,t]} := \mu \cdot \bigl(\Ind_{[0,t]}x\bigr) := \sum_{i=1}^{d} \int_{[0,T]} \Ind_{[0,t]}x^i(s) \, \mu^i(ds),
\end{equation*}
where $x^i$ and $\mu^i$ denote the respective components.

Let $F$ be an $\R^d$-valued non-anticipative continuous functional. We call $F$ \emph{affine} if there exists functions
\begin{equation*}
    F^0 : [0,T] \to \R^d, \quad L_t : C([0,T],\R^d) \to \R^d,
\end{equation*}
such that for every $t \in [0,T]$, $L_t$ is linear and $L_t(x)$ depends only on $x_{[0,t]}$, understood under the zero extension, and
\begin{equation*}
    F(t,x)=F^0(t)+L_t(x), \quad (t,x) \in \ccC_T(\R^d).
\end{equation*}
By the Riesz--Markov representation theorem, each scalar component of $L_t$ can be represented by integration against a finite $\R$-valued Borel measure on $[0,T]$ (see e.g. \cite[Theorem 4.35]{weaver_Measure_2013}). Hence, for every $t \in [0,T]$, there exists a matrix of finite signed measures
$\mu(t)=(\mu_{ij}(t))_{1\leq i,j \leq d}$ such that
\begin{equation*}
    (L_t(x))^i = \sum_{j=1}^d \int_{[0,t]} x^j(s)\,\mu_{ij}(t,ds), \quad i=1,\dotsc,d.
\end{equation*}

\section{Path-dependent affine processes}\label{sec:path-dependent-affine}

In this section we develop the notation of an affine path-dependent process $X$ as a process whose conditional Fourier--Laplace transform is exponentially affine in the past path (see Definition \ref{def:path-dependent-affine} below). Before this, we detail the required prerequisites.

\begin{definition}
    An $\R^d$-valued stochastic process $X$ 
    is called a $d$-dimensional \emph{path-dependent process}, if it is a continuous semimartingale and there exist continuous $b : \ccC_T^d \to \R^d$ and measurable $\sigma : \ccC_T^d \to \R^{d \times m}$, such that $a(t,x) := \sigma(t,x) \sigma(t, x)^\top$ is continuous and $X$ satisfies
    \begin{align}\label{path_dependent_sde}
        X(t) & = X(0) + \int_{0}^{t} b(s,X_{[0,s]}) \, ds + \int_{0}^{t} \sigma(s,X_{[0,s]}) \, dW(s), \quad 0 \leq t \leq T,
    \end{align}
    with deterministic initial value $X(0) \in \R^d$.
\end{definition}
This framework naturally extends the classical time-inhomogeneous Markovian case, where coefficients only depend on time and the current state, to situations in which path-dependence is present. Note that this definition implicitly assumes that $b$ and $\sigma$ are non-anticipative and satisfy the necessary integrability conditions.

In Section~\ref{section:existence} we show the existence of weak solutions to the path-dependent SDE~\eqref{path_dependent_sde} under the following additional assumption.
\begin{assumption}\label{LG-Condition}
    There exists a constant $C_{LG} \in (0,\infty)$ such that the coefficients $b$ and  $\sigma$ of the path-dependent SDE \eqref{path_dependent_sde}
    satisfy for all $(t,x) \in \ccC_T^d$ the linear growth condition
    \begin{equation*}
        \left\|b(t,x)\right\|+\left\|\sigma(t,x)\right\| \leq C_{LG} \Bigl(1+ \sup_{s \in [0,t]} \left\|x(s)\right\| \Bigr).
    \end{equation*}
\end{assumption}

Next, we introduce an important condition which states that at every time $t$ the support of $X$ is rich enough to determine the affine structure.
To this end, let $S \subseteq \R^d$ be a closed convex cone of full dimension, i.e.\ a convex set, closed under multiplication with positive scalars and with linear hull equal to $\R^d$.
A classical example is of course $\R^m_{\geq 0} \times \R^n$ with $m+n=d$, which is often used as the canonical state space for affine processes, see e.g.\ \cite{duffie_Affine_2003}.
Recall that the support of a generic random variable $\xi$ is the smallest closed set $A$ such that $P(\xi \in A)=1$; we use the notation $\supp(\xi)$ for this set. If $\xi$ takes values in $C([0,T],\R^d)$, we consider the topology induced by the supremums norm $\|\cdot\|_\infty$.
For a set $A$, we write $\conv(A)$ for its convex hull, that is, the smallest convex set containing $A$.

\begin{condition}\label{cond:full support_inhom}
    We say that a path-dependent process $X$ has \emph{full path-dependent support}
    $$\conv(\supp(X_{[0,t]}))=C([0,t],S)$$
    for all $t\in [0,T]$.
\end{condition}
This condition ensures that the affine representation of the process is uniquely determined by its law.
We note that this condition differs from the Condition 2.3 in \cite{keller-ressel_Affine_2019}, where the full support condition is only required for $X(t)$, i.e.\ at each fixed point in time. Here, however we need to exclude cases where $X$ is constant over time, which will not allow to uniquely identify the affine parameters (see Lemma \ref{lem:phi and psi uniquely determined}).

Not every measure yields a well-defined Fourier–Laplace transform. We therefore restrict to the dual cone of the state space $S$
$$ \cU(T):= \{ \mu \in \ccM([0,T], \C^d) \colon \Real(\mu \cdot x) \leq 0\text{ for all }x \in C([0,T],S) \}. $$

To ease the notation, we consider in the following the terminal time $T$ as a variable parameter rather than fixed.
In the Markovian affine case, continuous differentiability in $t$ of $\Psi(t,T,\mu)$ is essential to derive the associated Riccati equations. In the path-dependent setting, however, the assumption of continuous differentiability is in practice often too strong. Therefore, we introduce a weaker notion, which we call \emph{semimartingale differentiability}, which requires that the derivative of $\Psi$ can be represented in a specific way as a semimartingale.

\begin{definition}\label{def:semimartingale-diff}
    Let $\Gamma : [0,T]\to \ccM([0,T], \C^d)$, and let $X$ be a $d$-dimensional path-dependent process.
    We say that the pair $(\Gamma,X)$ is \emph{semimartingale differentiable} if the process
    \begin{equation*}
        \bigl(\Gamma(t)\cdot X_{[0,t]}\bigr)_{0\leq t\leq T}
    \end{equation*}
    is a continuous semimartingale and admits a representation of the form
    \begin{equation}\label{eq:representation-general}
        \Gamma(t)\cdot X_{[0,t]} = \Gamma(0)\cdot X_{[0,0]} - \int_0^t B\bigl(s,X_{[0,s]}\bigr)\,ds + \int_0^t C(s)\,dX(s),
    \end{equation}
    where $B\colon \ccC_T^d\to \C$ is an affine non-anticipative functional and $C\colon [0,T]\to \C^d$ deterministic.
    We call $B$ and $C$ the coefficients of the semimartingale differentiability of the pair $(\Gamma,X)$.
\end{definition}

Note that this definition implicitly assumes that all integrals appearing in
\eqref{eq:representation-general} are well-defined.
Sufficient conditions are that
$\int_0^T \bigl|B(s,x_{[0,s]})\bigr|\,ds < \infty$ for every $x \in C([0,T],\R^d)$,
and that $C$ is measurable with
$\int_0^T C(s)^\top \, d\langle X\rangle(s)\, C(s) < \infty$ almost surely.

\begin{remark}
    Definition~\ref{def:semimartingale-diff} states that the semimartingale
    $(\Gamma(t) \cdot X_{[0,t]})_{t \in [0,T]}$ admits a decomposition
    into a stochastic integral with respect to $X$ and an additional drift term.
    More precisely, the stochastic integral part has a deterministic integrand $C$,
    while the drift contribution is given by a non-anticipative functional $B$.
    The latter should be understood as an additional drift on top of the drift already present in $X$.
    The linearity in the path $X$ of $\Gamma$ as well as the linearity of the stochastic integral in the integrator $X$ indicate that the drift functional $B$ is affine.
\end{remark}

In our concrete setting, the measure-valued map $\Gamma$ arises from the Fourier--Laplace transform and therefore depends additionally on a terminal time $T<\infty$ and a parameter in the dual cone $\mu \in \cU(T)$.
This leads us to the following assumption on the semimartingale differentiability of the pair $(\Psi, X)$.

\begin{assumption}\label{ass:sm-diff-Psi}
    Let $\Psi : \{(t,T,\mu)\in[0,T]^2\times\,\cU(T):\, t\leq T\}\to \ccM([0,T], \C^d)$
    and let $X$ be a $d$-dimensional path-dependent process.
    We assume that, for every $T < \infty$ and every $\mu\in\cU(T)$, the pair $(\Psi(\cdot,T,\mu),\,X)$
    is semimartingale differentiable with coefficients $(\alpha, \psi)$.
    That is, for all $t\in[0,T]$, the process
    \begin{equation*}
        \bigl(\Psi(t,T,\mu)\cdot X_{[0,t]}\bigr)_{0\leq t\leq T}
    \end{equation*}
    is a continuous semimartingale and admits the representation
    \begin{equation}\label{eq:representation alpha beta_inhom}
        \Psi(t,T,\mu)\cdot X_{[0,t]} = \Psi(0,T,\mu)\cdot X_{[0,0]} - \int_0^t \alpha(s,X_{[0,s]},T,\mu)\,ds + \int_0^t \psi(s,T,\mu)\,dX(s),
    \end{equation}
    where $\alpha=\alpha(T,\mu): \ccC_T^d \to \C $ is an affine non-anticipative functional and
    $\psi = \psi(T,\mu): [0,T] \to \C^d$. Moreover, we assume that $t \to \psi(t,T,\mu)$ is c\`agl\`ad and of finite variation.
\end{assumption}

The notation in Equation \eqref{eq:representation alpha beta_inhom} has been chosen deliberately. In the affine framework developed below, the function $\psi$ will be the fundamental object, satisfying a system of Riccati-type equations. Later we will also see that, similarly as $\phi$ in the Markovian affine case, once $\psi$ is known, $\alpha$ can be determined by a simple integration. Thus, the measure-valued function $\Psi$ is solely determined by $\psi$.
For notational simplicity, we may write $\psi(s)$ or $\psi(s,\mu)$ for $\psi(s,T,\mu)$ and similarly for $\alpha$, depending on the context.
The above condition is similar to absolute continuity. The following example puts the semimartingale differentiability assumption in relation to the functional It\^o calculus. The precise definitions and technical details of the functional It\^o calculus can be found in Appendix~\ref{app:functional_ito}.

\begin{example}\label{ex:functional-ito-semimartingale-diff}
    Let us consider the linear non-anticipative functional $\Gamma_t(x_{[0,t]})=\Gamma(t)\cdot x_{[0,t]}$ for all $(t,x) \in \ccC_T^d$.
    A natural candidate for the drift term in Definition~\ref{def:semimartingale-diff} is the horizontal derivative
    \begin{align*}
        \cD_t\Gamma(x) & = \lim_{h \to 0+} \frac{\Gamma_{t+h}(x_{t,h}) - \Gamma_t(x_t)}{h}                                    \\
                       & =\lim_{h \to 0+} \frac{\int_0^{t}x(s)(\Gamma(t+h)(ds) - \Gamma(t)(ds))+x(t)\Gamma(t+h)((t,t+h])}{h}.
    \end{align*}
    Hence, if we assume that the measures
    $$\frac{\partial \Gamma}{\partial t} := \lim_{h \to 0+} \frac{(\Gamma(t+h)(\cdot)- \Gamma(t)(\cdot))}{h}$$
    converges in the weak topology (for finite measures) to a finite measure
    $$\frac{\partial \Gamma(t)}{\partial t}(\cdot),$$
    and that the limit
    $$\lim_{h \to 0+} \frac{\Gamma(t+h)((t,t+h])}{h}$$
    exists and equals a function $\tilde{\Gamma}(t)$ which is continuous in $t$, and that the mapping $t\to\frac{\partial \Gamma(t)}{\partial t}(\cdot)$ is continuous.
    Then we may conclude that $\Gamma_\cdot \in \C^1$.

    Since in the vertical derivative only the path $x$ is perturbed at time $t$, while the integrator $\Gamma(t)(ds)$ remains fixed, we obtain that
    \begin{equation*}
        \nabla_x \Gamma_t \ \text{exists, does not depend on $x$, and equals the vector} \
        \big( \Gamma_i(t)(\{t\}) \big)_{i=1,\dots,d}.
    \end{equation*}
    Since the vertical derivative does not depend on $x$ anymore the second vertical derivative vanishes.

    Therefore, under the above assumptions (ensuring the existence of a continuous horizontal differential), we have
    $\Gamma_\cdot \in \C^{1,2}_b$, provided that $t \mapsto \Gamma_i(t)(\{t\})$ is continuous.

    Applying the functional It\^o formula then yields
    \begin{equation}
        \Gamma_t = \Gamma_0 + \int_0^t \cD_s \Gamma_s\, ds + \int_0^t \nabla_x \Gamma_s\, dX(s),
    \end{equation}
    with the non-random vector
    \begin{equation*}
        \nabla_x \Gamma_s = \big( \Gamma_i(s)(\{s\}) \big)_{i=1,\dots,d}.
    \end{equation*}
    Hence, in this case we may identify
    \begin{align*}
        B(s,x_{[0,s]}) = -\cD_s \Gamma_s(x_{[0,s]}), \quad
        C(s) = \big( \Gamma_i(s)(\{s\}) \big)_{i=1,\dots,d}.
    \end{align*}
\end{example}

\bigskip

The above example shows that every linear non-anticipative functional admits a first and second vertical derivative, but the horizontal derivative may fail to exist.

\begin{example}\label{example:semimartingale-functionals}
    The semimartingale differentiability condition from Definition~\ref{def:semimartingale-diff}
    is strictly weaker than the existence of a horizontal derivative in the sense of
    Example~\ref{ex:functional-ito-semimartingale-diff}. We illustrate this with two
    classes of linear non-anticipative functionals $\Gamma_t(x_{[0,t]}) = \Gamma(t) \cdot x_{[0,t]}$, $x \in C([0,T],\R)$,
    which satisfy the representation in Definition~\ref{def:semimartingale-diff}, but
    for which the horizontal derivative $D_t\Gamma$ fails to exist at some times.

    \begin{itemize}
        \item[(i)] Fix $a \in (0,T)$ and define
              \begin{equation*}
                  g(t) = \sqrt{(t-a)^+}, \quad \Gamma(t,ds) = g(t)\delta_t(ds).
              \end{equation*}
              Then
              \begin{equation*}
                  \Gamma_t(x_{[0,t]})=g(t)x(t), \quad x\in C([0,T],\R).
              \end{equation*}
              Now let $X$ be a continuous semimartingale. By integration by parts,
              \begin{equation*}
                  \Gamma_t(X_{[0,t]}) = g(t)X(t) = \int_0^t g(s)\,dX(s)+\int_0^t g'(s)X(s)\,ds,
              \end{equation*}
              where
              \begin{equation*}
                  g'(s)= \frac{\Ind_{\{s>a\}}}{2\sqrt{s-a}} \quad \text{for a.e. } s \in [0,T].
              \end{equation*}
              Hence, the pair $(\Gamma,X)$ is semimartingale differentiable with coefficients
              \begin{equation*}
                  C(s)=\sqrt{(s-a)^+}, \quad B(s,x_{[0,s]}) = -\frac{\Ind_{\{s>a\}}}{2\sqrt{s-a}}\,x(s).
              \end{equation*}
              On the other hand, the horizontal derivative does not exist at $t=a$.
              Indeed, for $x\in C([0,T],\R)$ and $h>0$,
              \begin{equation*}
                  \Gamma_{a+h}(x_{a,h}) = \sqrt{h}\,x(a), \quad \Gamma_a(x_a) = 0.
              \end{equation*}
              Therefore,
              \begin{equation*}
                  \frac{\Gamma_{a+h}(x_{a,h})-\Gamma_a(x_a)}{h} = \frac{x(a)}{\sqrt{h}},
              \end{equation*}
              which diverges whenever $x(a) \neq 0$.
              Thus, the horizontal derivative fails to exist at $t = a$.
        \item[(ii)] Fix $t_0\in[0,T)$ and define
              \begin{equation*}
                  k(s) := \sum_{n=1}^\infty (-1)^n \Ind_{(t_0 + 2^{-n-1}, t_0 + 2^{-n}]}(s), \quad s \in [0,T],
              \end{equation*}
              where we set $k(s) = 0$ for $s \leq t_0$ if necessary. Define
              \begin{equation*}
                  \Gamma(t,ds) = \delta_t(ds) + k(s)\Ind_{[0,t)}(s)\,ds, \quad \Gamma_t(x_{[0,t]}) = x(t) + \int_0^t k(s)x(s) \, ds.
              \end{equation*}
              Then, for every continuous semimartingale $X$,
              \begin{equation*}
                  \Gamma_t(X_{[0,t]}) = \Gamma_0(X_{[0,0]}) - \int_0^t B(s,X_{[0,s]}) \, ds + \int_0^t C(s) \, dX(s),
              \end{equation*}
              with
              \begin{equation*}
                  B(s,x_{[0,s]}) = -k(s)x(s) = B^0(s) + \int_{[0,s]} x(u) \, B^1(s,du), \quad (t,x) \in \ccC_T^1,
              \end{equation*}
              where
              \begin{equation*}
                  B^0(s)\equiv 0, \quad B^1(s,du) = -k(s)\delta_s(du), \quad C(s)\equiv 1.
              \end{equation*}
              However, the horizontal derivative fails at $t=t_0$. Indeed, for $h>0$ we have
              \begin{equation*}
                  \Gamma_{t_0+h}(x_{t_0,h}) - \Gamma_{t_0}(x_{t_0}) = x(t_0)\int_{t_0}^{t_0+h} k(s) \, ds.
              \end{equation*}
              Hence,
              \begin{equation*}
                  \frac{\Gamma_{t_0+h}(x_{t_0,h}) - \Gamma_{t_0}(x_{t_0})}{h}
                  = x(t_0) \frac{1}{h} \int_{t_0}^{t_0+h} k(s) \, ds.
              \end{equation*}
              Along the sequence $h_m = 2^{-m}$, $m \in \N$, one computes
              \begin{equation*}
                  \frac{1}{h_m}\int_{t_0}^{t_0+h_m} k(s) \, ds = \frac{(-1)^m}{3},
              \end{equation*}
              so the difference quotient oscillates and therefore has no limit whenever
              $x(t_0)\neq 0$. Consequently, the horizontal derivative fails to exist at that point.
    \end{itemize}
\end{example}

With those regularity conditions we are now ready to state the main definition.

\begin{definition}\label{def:path-dependent-affine}
    Let $X$ be a path-dependent process taking values in $S$.
    We call  $X$ \emph{affine} (up to time $T$), if there exist
    \begin{align*}
        \phi & : \{(t,T,\mu) \in [0,T]^2 \times \cU(T) : t \leq T\} \to \C,                \\
        \Psi & : \{(t,T,\mu) \in [0,T]^2 \times \cU(T) : t \leq T\} \to \ccM([0,T], \C^d),
    \end{align*}
    with $\phi(\cdot,T,0) = 0$ and $\Psi(\cdot,T,0) =0$
    such that
    \begin{enumerate}
        \item[(i)] for each fixed $\mu$, $t \mapsto \phi(t,T,\mu)$ is absolutely continuous on $[0,T]$ and the pair $(\Psi, X)$ satisfies Assumption~\ref{ass:sm-diff-Psi},
        \item[(ii)] for each fixed $(t,T)$, the mappings $\mu \mapsto \phi(t,T,\mu)$ and $\mu \mapsto \Psi(t,T,\mu)$ are continuous in the $weak^\ast$-topology, and
        \item[(iii)] the following affine transform formula holds:
              \begin{equation}\label{eq:affine-path}
                  E[e^{\, \mu \cdot X_{[0,T]} } \mid \ccF_t] = \exp\bigl( \phi(t,T,\mu) + \Psi(t,T,\mu) \cdot X_{[0,t]} \bigr)
              \end{equation}
              for all $ 0 \leq t \leq T $ and all $\mu \in \cU(T)$.
    \end{enumerate}
\end{definition}

As seen in Example~\ref{ex:functional-ito-semimartingale-diff}, a sufficient condition for the semimartingale differentiability assumption to hold is that $\Psi(t,T,\mu) \cdot X_{[0,t]} \in \C^{1,2}_{b}$ for all $t \in [0,T]$.
Note that the left-hand side of Equation \eqref{eq:affine-path} is always well-defined, due to the definition of the dual cone $\cU(T)$.

Although the definition above is given for a fixed time horizon $T$, it immediately implies the affine property for shorter horizons.
That is, the exponential-affine transform also holds for any $0 \leq T' < T$, as stated in the following lemma.

\begin{lemma}\label{lm:also-path-dependent}
    If $X$ is an affine process on $S$, then for all $0 \leq T' \leq T$, there exist
    $\C$- and $\ccM([0,T], \C^d)$
    %was: $\ccM^d([0,T'])$
    -valued functions $\phi(t,T',\mu)$ and $\Psi(t,T',\mu)$, satisfying $(i)$ and $(ii)$ of Definition~\ref{def:path-dependent-affine}, such that
    \begin{align}
        E[e^{ \, \mu \cdot X_{[0,T']} } \mid \ccF_t] = \exp\bigl( \phi(t,T',\mu) + \Psi(t,T',\mu) \cdot X_{[0,t]} \bigr)
    \end{align}
    for all $0 \leq t \leq T'$ and all $\mu \in \cU(T')$.
\end{lemma}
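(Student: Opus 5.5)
The plan is to reduce the horizon-$T'$ statement to the horizon-$T$ one by viewing a measure on $[0,T']$ as a measure on $[0,T]$ carried by $[0,T']$. Given $\mu\in\cU(T')$, define $\tilde\mu\in\ccM([0,T],\C^d)$ by $\tilde\mu(A):=\mu(A\cap[0,T'])$. Then $\tilde\mu\cdot x_{[0,T]}=\mu\cdot x_{[0,T']}$ for every $x\in C([0,T],\R^d)$.

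First I check that $\tilde\mu\in\cU(T)$. For $x\in C([0,T],S)$ the restriction $x|_{[0,T']}$ lies in $C([0,T'],S)$, so $\Real(\tilde\mu\cdot x)=\Real(\mu\cdot x|_{[0,T']})\le 0$. The only subtlety is the convention of how $\cU(T')$ pairs with paths on $[0,T']$. Since every path in $C([0,T'],S)$ extends continuously to $[0,T]$ (for instance, constantly after $T'$) with values in $S$, both readings of the dual cone agree.

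I then define
\[
\phi(t,T',\mu):=\phi(t,T,\tilde\mu),\qquad \Psi(t,T',\mu):=\Psi(t,T,\tilde\mu),\qquad 0\le t\le T'.
\]
For $t\le T'$, the transform formula \eqref{eq:affine-path} applied with $\tilde\mu$ gives
\[
E\bigl[e^{\mu\cdot X_{[0,T']}}\mid\ccF_t\bigr]=E\bigl[e^{\tilde\mu\cdot X_{[0,T]}}\mid\ccF_t\bigr]=\exp\bigl(\phi(t,T,\tilde\mu)+\Psi(t,T,\tilde\mu)\cdot X_{[0,t]}\bigr),
\]
which is the claimed identity. The normalization also carries over: $\mu=0$ gives $\tilde\mu=0$, so $\phi(\cdot,T',0)=0$ and $\Psi(\cdot,T',0)=0$.

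Next I verify properties (i) and (ii) of Definition~\ref{def:path-dependent-affine}.
\begin{itemize}
\item Absolute continuity of $t\mapsto\phi(t,T,\tilde\mu)$ on $[0,T]$ restricts to absolute continuity on $[0,T']$.
\item The representation \eqref{eq:representation alpha beta_inhom} for $(\Psi(\cdot,T,\tilde\mu),X)$ restricts to $t\in[0,T']$, with coefficients $\alpha(\cdot,T,\tilde\mu)$ and $\psi(\cdot,T,\tilde\mu)$ restricted to $[0,T']$. These restrictions keep the affine non-anticipative structure, and the restricted $\psi$ stays càglàd and of finite variation.
\item For (ii), the map $\mu\mapsto\tilde\mu$ is weak$^\ast$ continuous. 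If $\mu_\alpha\to\mu$ weak$^\ast$ (tested against $C([0,T'])$), then for $f\in C([0,T])$ we have $\int f\,d\tilde\mu_\alpha=\int f|_{[0,T']}\,d\mu_\alpha\to\int f\,d\tilde\mu$. Composing with the weak$^\ast$ continuity of $\phi(t,T,\cdot)$ and $\Psi(t,T,\cdot)$ gives (ii).
\end{itemize}

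I expect the main obstacle to be bookkeeping rather than analysis. It lies in checking that the dual-cone and weak$^\ast$ conventions on $[0,T']$ versus $[0,T]$ are compatible, and this rests on the extension argument for paths and test functions given above.
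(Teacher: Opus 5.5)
Your proposal is correct and uses the same truncation argument as the paper: extend $\mu\in\cU(T')$ by zero to $\tilde\mu=\Ind_{[0,T']}\mu\in\cU(T)$, then set $\phi(t,T',\mu):=\phi(t,T,\tilde\mu)$ and $\Psi(t,T',\mu):=\Psi(t,T,\tilde\mu)$. Your explicit checks of (i), (ii) and the normalization spell out what the paper leaves as ``the desired properties''; in particular, treating $\mu$ as a measure on $[0,T']$ tested against $C([0,T'])$ is what makes $\mu\mapsto\tilde\mu$ weak$^\ast$ continuous.
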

\begin{proof}
    The claim follows from a simple truncation argument, due to the structural assumptions on $S$.
    For $\mu \in \cU(T)$, define the measure $\tilde \mu := \Ind_{[0,T']} \mu$.
    By construction, $\tilde \mu \in \cU(T)$ and
    \begin{equation*}
        \tilde \mu \cdot X_{[0,T]} = \mu \cdot X_{[0,T']},
    \end{equation*}
    such that
    \begin{align}
        \phi(t,T',\mu) & := \phi(t,T,\tilde \mu) \quad \text{ and } \quad \Psi(t,T',\mu) :=\Psi(t,T,\tilde \mu)
    \end{align}
    have the desired properties.
\end{proof}

The following lemma shows that the full-support condition ensures uniqueness
of the affine representation, in the sense that $\phi(t,T,\mu)$ and the
restriction of $\Psi(t,T,\mu)$ to $[0,t]$ are uniquely determined.
In particular, no uniqueness is asserted for the part of $\Psi(t,T,\mu)$
on $(t,T]$.

\begin{lemma}\label{lem:phi and psi uniquely determined}
    Let $X$ be an affine process satisfying Condition~\ref{cond:full support_inhom}.
    Then $\phi(t,T,\mu)$ and the restricted measure $ \Psi(t,T,\mu)|_{[0,t]} \in \ccM([0,t],\C^d)$
    are uniquely determined by \eqref{eq:affine-path} for all $0 \leq t \leq T$ and every $\mu \in \cU(T)$.
\end{lemma}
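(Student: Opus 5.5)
Suppose two pairs $(\phi,\Psi)$ and $(\tilde\phi,\tilde\Psi)$ both satisfy \eqref{eq:affine-path} for a fixed $\mu\in\cU(T)$ and $t\in[0,T]$. The plan is to show that the affine functional
\[
F(x) := \phi(t,T,\mu)-\tilde\phi(t,T,\mu) + \bigl(\Psi(t,T,\mu)-\tilde\Psi(t,T,\mu)\bigr)\cdot x_{[0,t]}
\]
vanishes on all of $C([0,t],S)$, and then read off that the constant and the restricted measure both vanish.

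First, both right-hand sides equal the same conditional expectation, so $\exp(F(X_{[0,t]}))=1$ almost surely, i.e.\ $F(X_{[0,t]})\in 2\pi i\Z$ a.s. Since $x\mapsto F(x)$ is continuous in $\|\cdot\|_\infty$ (the measures have finite total variation), $F$ takes values in the closed set $2\pi i\Z$ on all of $\supp(X_{[0,t]})$. To remove the $2\pi i\Z$ ambiguity I would use connectedness: the path of $X$ is continuous, and $s\mapsto F$ evaluated along the stopped paths is handled most cleanly by the following device. Replace $\mu$ by $\lambda\mu$ for $\lambda\in[0,1]$; $\lambda\mu\in\cU(T)$ since $\cU(T)$ is a cone. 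By (ii) of Definition~\ref{def:path-dependent-affine} both representations are weak$^*$-continuous in $\mu$, and both vanish at $\mu=0$. Hence for each fixed path $x$ in the support, $\lambda\mapsto F_\lambda(x)$ is continuous, lies in $2\pi i\Z$, and equals $0$ at $\lambda=0$. Therefore it is identically $0$. (Weak$^*$-continuity gives continuity of $\lambda\mapsto\Psi(t,T,\lambda\mu)\cdot x_{[0,t]}$ for continuous $x$; the indicator $\Ind_{[0,t]}$ is harmless because we only evaluate on the restriction to $[0,t]$, and I would treat this pairing as $\Psi|_{[0,t]}$ against the continuous function $x$ on $[0,t]$.) Thus $F=0$ on $\supp(X_{[0,t]})$.

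Next, $F$ is affine, so it also vanishes on $\conv(\supp(X_{[0,t]}))$, which by Condition~\ref{cond:full support_inhom} equals $C([0,t],S)$. Since $S$ is a cone containing $0$, the zero path is admissible, giving $\phi=\tilde\phi$. Consequently the linear part $\nu:=(\Psi-\tilde\Psi)|_{[0,t]}$ satisfies $\nu\cdot x=0$ for all $x\in C([0,t],S)$. Because $S$ has full linear span, every $y\in\R^d$ is a difference $s_1-s_2$ of elements of $S$. Given a real continuous $f\ge0$ on $[0,t]$ and $s\in S$, the path $fs$ lies in $C([0,t],S)$. Using linear combinations and real/imaginary parts, one gets $\int f\,d(\nu\cdot e_j)=0$ for all continuous $f$ and all coordinate directions $e_j$. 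By Riesz--Markov uniqueness, $\nu=0$ as a measure on $[0,t]$.

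The main obstacle is the branch/$2\pi i\Z$ ambiguity of the logarithm, together with justifying that the continuity in $\lambda$ applies pointwise on the support. If weak$^*$-continuity gave trouble there, a fallback is to note that $F(X_{[0,t]})$ is a.s.\ in $2\pi i\Z$ and is a continuous function of the connected set $C([0,t],S)$. One would then show $F$ is constant on $\supp$ via the convex hull: an affine $F$ that is $2\pi i\Z$-valued on a set $A$ is constant on connected pieces. But since the support need not be connected, the homotopy in $\lambda$ is the preferred route.
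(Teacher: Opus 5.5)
Your argument is essentially the paper's: you remove the $2\pi i\Z$ ambiguity by connectedness through $\mu=0$ inside $\cU(T)$, then extend by convex combinations and use that $S$ is a full-dimensional cone. Your ordering, passing first from the a.s.\ statement to all of $\supp(X_{[0,t]})$ via continuity of $F$ in $x$ and only then running the homotopy $\lambda\mapsto\lambda\mu$ pointwise in $x$, is slightly more careful than the paper's, which applies connectedness in $\mu$ to an a.s.\ statement whose null sets depend on $\mu$. One caveat you share with the paper: continuity of $\lambda\mapsto\Psi(t,T,\lambda\mu)\cdot(\Ind_{[0,t]}x)$ does not follow from weak$^\ast$-continuity on $[0,T]$, because $\Ind_{[0,t]}x$ is discontinuous at $t$ when $x(t)\neq 0$ and restriction to $[0,t]$ is not weak$^\ast$-continuous; so your ``harmless'' remark is really the same implicit assumption the paper makes when invoking Definition~\ref{def:path-dependent-affine}(ii).
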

\begin{proof}
    Consider fixed $0 \leq t \leq T$ and $\mu \in \cU(T)$. Suppose that $\widetilde{\phi}(t,T,\mu)$ and $\widetilde{\Psi}(t,T,\mu)$ also satisfy
    \eqref{eq:affine-path} and are also continuous in $\mu$. Set $p(t,T,\mu) := \widetilde{\phi}(t,T,\mu) - \phi(t,T,\mu)$ and $q(t,T,\mu) := \widetilde{\Psi}(t,T,\mu) - \Psi(t,T,\mu)$. Since $X_{[0,t]}$ only depends on the path on $[0,t]$, we have
    $q(t,T,\mu)\cdot X_{[0,t]} = q(t,T,\mu)|_{[0,t]}\cdot X_{[0,t]}$.
    Then,
    $$ 1 =  \exp\big(p(t,T,\mu) +  q(t,T,\mu)|_{[0,t]} \cdot  X_{[0,t]}  \big) \quad P\text{-a.s.}$$
    This implies that
    $$p(t,T,\mu) +  q(t,T,\mu)|_{[0,t]} \cdot  X_{[0,t]}  \in 2\pi i \Z \quad P\text{-a.s.}$$
    However, since $\cU(T)$ is convex and hence connected, its image under a continuous function, in the weak$^\ast$-topology, is also connected and a connected subset of a discrete set is a singleton.
    Therefore,
    $\mu \mapsto p(t,T,\mu) +  q(t,T,\mu)|_{[0,t]} \cdot X_{[0,t]}$ is constant on $\cU(T)$ and, therefore, equal to $p(t,T,0) +  q(t,T,0)|_{[0,t]} \cdot  X_{[0,t]} = 0$.
    Hence,
    \begin{equation*}
        p(t,T,\mu) + q(t,T,\mu)|_{[0,t]} \cdot x = 0,
    \end{equation*}
    for all $x \in \supp(X_{[0,t]})$ and $\mu \in \cU(T)$. Taking convex combinations, this equality can be extended to $x \in C([0,t],S)$. Since $S$ has full linear span, we conclude that $p(t,T,\mu)=0$ and $q(t,T,\mu)|_{[0,t]} = 0$ for all $\mu \in \cU(T)$.
\end{proof}
Note that by \eqref{eq:affine-path}, $\phi$ and $\Psi$ also satisfy the terminal conditions $\phi(T,T,\mu)=0$ and $\Psi(T,T,\mu)=\mu$.

\begin{lemma}\label{lem:beta_inhom}
    Let $X$ be an affine process (up to time $T$).
    Then, for the function $\psi$, given by the semimartingale differentiability Assumption~\ref{ass:sm-diff-Psi}, it holds that
    $$ \psi(T,T,i u \delta_{T}) = iu^\top$$
    for all $u \in \R^d$ and all $T \geq 0$.
\end{lemma}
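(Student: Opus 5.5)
The plan is to compare two semimartingale decompositions of the martingale
\[
M(t):=E\bigl[e^{iu^\top X(T)}\mid\ccF_t\bigr]
\]
on a shrinking interval $[t,T]$. By \eqref{eq:affine-path} with $\mu=iu\delta_T\in\cU(T)$ (note $\Real(iu\delta_T\cdot x)=0$), we have $M(t)=\exp(Z(t))$ with $Z(t):=\phi(t,T,\mu)+\Psi(t,T,\mu)\cdot X_{[0,t]}$. By Assumption~\ref{ass:sm-diff-Psi} and absolute continuity of $\phi$,
\[
dZ(t)=\bigl(\partial_t\phi(t)-\alpha(t,X_{[0,t]})\bigr)\,dt+\psi(t)\,dX(t).
\]
Itô's formula then gives the martingale part of $M$ as $\int M(s)\psi(s)\sigma(s,X_{[0,s]})\,dW(s)$. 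Since $M$ is a martingale, all $dt$-terms cancel, and $M$ is bounded by $1$.

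Next I would decompose the terminal value a second way. Write $e^{iu^\top X(T)}-e^{iu^\top X(t)}$ via Itô's formula for the smooth function $y\mapsto e^{iu^\top y}$. Its martingale part is $\int_t^T e^{iu^\top X(s)}\,iu^\top\sigma(s,X_{[0,s]})\,dW(s)$, and its drift part is $O(T-t)$ in $L^2$ by Assumption~\ref{LG-Condition} and continuity of $a$. Subtracting the two representations of $e^{iu^\top X(T)}$ and taking conditional second moments, I expect an estimate of the form
\[
E\int_t^T \bigl\|\bigl(M(s)\psi(s)-e^{iu^\top X(s)}iu^\top\bigr)\sigma(s,X_{[0,s]})\bigr\|^2 ds
\;\lesssim\; E\bigl|M(t)-e^{iu^\top X(t)}\bigr|^2+O\bigl((T-t)^2\bigr).
\]
The first term on the right can itself be bounded by the same quadratic-variation expression on $[t,T]$ plus $O((T-t)^2)$. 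This should close the argument to give $o(T-t)$.

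Dividing by $T-t$ and letting $t\uparrow T$, I would use three facts: $\psi$ is càglàd, so $\psi(s)\to\psi(T)$ as $s\uparrow T$; $M(s)\to e^{iu^\top X(T)}$ (continuous martingale); and $a$ is continuous. These should yield
\[
\bigl(\psi(T)-iu^\top\bigr)\,a(T,X_{[0,T]})\,\bigl(\psi(T)-iu^\top\bigr)^{*}=0\quad\text{a.s.}
\]
Equivalently, $\psi(T)-iu^\top$ annihilates the diffusion at time $T$. This is consistent with the heuristic from Example~\ref{ex:functional-ito-semimartingale-diff}: the $dX$-coefficient of $\Psi(t)\cdot X_{[0,t]}$ is the atom $\Psi(t)(\{t\})$, and the terminal condition $\Psi(T,T,\mu)=\mu=iu\delta_T$ has atom $iu$ at $T$.

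The main obstacle is upgrading this to the identity $\psi(T,T,iu\delta_T)=iu^\top$ itself. Where $a(T,\cdot)$ is degenerate, the $dX$-integrand is not pinned down by the law. I would try first to pass through the terminal identity $\Psi(T,T,\mu)=\mu$ directly: show that the jump/atom structure forced by Assumption~\ref{ass:sm-diff-Psi} makes $\psi(T)$ equal the atom of $\Psi(T,T,\mu)$ at $\{T\}$, arguing as in Example~\ref{ex:functional-ito-semimartingale-diff}. Failing that, I would invoke the nondegeneracy implicit in the full-support Condition~\ref{cond:full support_inhom} to conclude from the displayed quadratic identity.
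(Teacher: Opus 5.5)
\textbf{There is a genuine gap: the step you flag as the main obstacle is never carried out, and it cannot be without an extra hypothesis.}

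Your first step is sound under Assumption~\ref{LG-Condition}, and it can be shortened. The term $M(t)-e^{iu^\top X(t)}$ is $\ccF_t$-measurable, hence orthogonal to $\int_t^T(\cdots)\,dW$. So both squared norms are bounded at once by $E\bigl|\int_t^T D(s)\,ds\bigr|^2=O((T-t)^2)$, where $D$ is the It\^o drift of $e^{iu^\top X}$. This does give $w\,a(T,X_{[0,T]})\,w^{*}=0$ a.s.\ for $w:=\psi(T,T,iu\delta_T)-iu^\top$.

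Passing from there to $w=0$ is the whole content of the lemma, and neither fallback supplies it.
\begin{itemize}
\item Option (b) uses a hypothesis the lemma does not have. Even Condition~\ref{cond:full support_inhom} concerns the law of the path, not the rank of $a$ at the single instant $T$. For $d=1$, $b=0$, $a(t,x)=T-t$, we have $a>0$ on $[0,T)$ but $a(T,\cdot)=0$, so your limiting identity is vacuous. The conclusion holds in that example only because $\psi$ is identified on $[0,T)$ and is left-continuous.
\item Option (a) is in substance the paper's own proof. It evaluates the representation of Assumption~\ref{ass:sm-diff-Psi} at $t=T$, uses $\Psi(T,T,\mu)=\mu$ to write $iu^\top X(T)=iu^\top X(0)+\int_0^T iu^\top\,dX(s)$, and reads off $\psi(T)=iu^\top$ by uniqueness of semimartingale characteristics. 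However, the processes $t\mapsto\Psi(t,T,\mu)\cdot X_{[0,t]}$ and $t\mapsto iu^\top X(t)$ agree only at $t=T$. Moreover, the canonical decomposition of the former determines $\psi$ only through $\psi(s)\sigma(s,X_{[0,s]})$, $ds\otimes dP$-a.e. It sees neither the value of $\psi$ at the single time $T$ (other than as a left limit) nor the component of $\psi$ in $\ker a$.
\item Assumption~\ref{ass:sm-diff-Psi} does not tie $\psi(t)$ to the atom $\Psi(t,T,\mu)(\{t\})$. That link comes from the functional It\^o formula under the $\C^{1,2}_b$ hypotheses of Example~\ref{ex:functional-ito-semimartingale-diff}, which are not assumed.
\end{itemize}

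No argument can close this without a nondegeneracy hypothesis, because the statement fails for degenerate processes. Take $S=\R^d$, $b=0$, $\sigma=0$, so $X\equiv X(0)$. Set $\phi\equiv0$, $\Psi(t,T,\mu)=\mu([0,T])\,\delta_t$ for $t<T$ and $\Psi(T,T,\mu)=\mu$. Then every requirement of Definition~\ref{def:path-dependent-affine} holds. Since $dX=0$, the representation in Assumption~\ref{ass:sm-diff-Psi} holds with $\alpha\equiv0$ and \emph{any} c\`agl\`ad finite-variation $\psi$, e.g.\ $\psi\equiv0\neq iu^\top$ for $u\neq0$.

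So you must import a hypothesis. The one your argument uses directly is that $E[a(T,X_{[0,T]})]$ is positive definite, equivalently that no $v\neq0$ has $v^\top a(T,X_{[0,T]})v=0$ a.s. Since $a$ is real symmetric and positive semidefinite,
\[
w\,a\,w^{*}=(\Real w)\,a\,(\Real w)^\top+(\operatorname{Im} w)\,a\,(\operatorname{Im} w)^\top,
\]
so your identity then forces $w=0$.

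Separately, Assumption~\ref{LG-Condition}, which your $L^2$ bounds rely on, is also not a hypothesis of the lemma. It appears in Theorem~\ref{thm:affine1_inhom}, where the lemma is used; a localization would handle this.
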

\begin{proof}
    Indeed, by Definition~\ref{def:path-dependent-affine}, $\Psi(T,T,\mu) = \mu$, such that
    \begin{align}
        \Psi(T,T,\mu) \cdot X_{[0,T]} & = \int_{0}^{T}X(s) \, \mu(ds)
    \end{align}
    holds for all $T \geq 0$.
    When $\mu = iu\delta_{T} \in \cU(T)$ for a $u \in \R^d$, this reduces to
    \begin{align}
        \int_{0}^{T}X(s) \, \mu(ds) = iu^\top X(T) = iu^\top X(0) + \int_{[0,T]} iu^\top dX(s).
    \end{align}
    Uniqueness of the semimartingale characteristics together with representation \eqref{eq:representation alpha beta_inhom} then yields
    $$\psi(T,T,iu\delta_{T})=iu^\top$$
    as claimed.
\end{proof}

We now turn to the general structural result which characterizes the drift and diffusion coefficients of a path-dependent affine process. % taking values in $S$ with its associated Riccati equations. 
To simplify notation we introduce the following $d \times d$ matrix-valued function
\begin{equation}\label{notation:B}
    B(t,x) = \left( ( b^1(t) \cdot x^1 ) \dots (b^d(t) \cdot x^d)\right), \quad (t,x) \in \ccC_T(S).
\end{equation}
Furthermore, for any row vector $v \in (\C^d)^\ast$ we define the $(\C^d)^\ast$-valued function
\begin{equation}\label{notation:A}
    A(t,x,v) = \left(v (a^1(t) \cdot x^1) v^\top, \dots, v  (a^d(t)\cdot x^d) v^\top\right), \quad (t,x) \in \ccC_T(S).
\end{equation}

\begin{theorem}\label{thm:affine1_inhom}
    Let $X$ be an affine process on $S$ satisfying the full-support condition, Condition \ref{cond:full support_inhom} and Assumption~\ref{LG-Condition}.
    Then, there exist $b^0: [0,T]  \to \R^d$,\ $a^0 : [0,T] \to \R^{d \times d}$ and $b^i: [0,T] \to \ccM([0,T],\R^d)$, $a^i : [0,T] \to \ccM([0,T],\R^{d\times d})$, $i=1,\dots,d$, such that
    \begin{align}\label{eq:affine representation_inhom}
        b(t,x_{[0,t]}) & = b^0(t) + \sum_{i=1}^d  b^i(t) \cdot x^i_{[0,t]}, \\
        \label{eq:affine representation2_inhom}
        a(t,x_{[0,t]}) & = a^0(t) + \sum_{i=1}^d  a^i(t) \cdot x^i_{[0,t]},
    \end{align}
    for all $(t,x) \in \ccC_T(S)$. Moreover, the functions $\phi$ and $\psi$ satisfy the following generalized Riccati equations% for all $(t,x) \in \ccC_T(S)$
    \begin{align}\label{eq:riccati_phi_thm}
        \phi'(t,T,\mu) & = -\psi(t,T,\mu) b^0(t) - \frac{1}{2} \psi(t,T,\mu) a^0(t) \psi(t,T,\mu)^\top, \quad dt\text{-a.e. on } t \in [0,T],                   \\
        \label{eq:riccati_beta_thm}
        \psi(t,T,\mu)  & = \mu([t,T]) + \int_{t}^{T} \psi(s,T,\mu)B(s,\Ind_{[t,s]}) + \frac{1}{2} A(s,\Ind_{[t,s]}, \psi(s,T,\mu)) \, ds, \quad t \in (0,T],
    \end{align}
    with $\phi(T,T,\mu)=0$ and $\Psi(T,T,\mu)=\mu$,
    where $\psi$ is given by Assumption~\ref{ass:sm-diff-Psi} and $\phi'(t,T,\mu)$ denotes a Lebesgue density of $\phi$.
    Furthermore,
    \begin{align}\label{eq:main-thm-fnctional-psi}
        \begin{split}
            \Psi(t,T,\mu) \cdot X_{[0,t]} & = \Psi(0,T,\mu) \cdot X_{[0,0]} + \int_{0}^{t} \psi(s,T,\mu) \, dX(s)                                                                                             \\
                                          & \quad - \int_{0}^{t} \sum_{i = 1}^{d} \psi(s,T,\mu) ( b^i(s) \cdot X^i_{[0,s]}) + \frac{1}{2} \psi(s,T,\mu) ( a^i(s) \cdot X^i_{[0,s]}) \psi(s,T,\mu)^\top \, ds,
        \end{split}
    \end{align}
    with
    \begin{equation*}
        \Psi(0,T,\mu)(\{0\}) = \mu([0,T]) + \int_{0}^{T} \psi(s,T,\mu)B(s,\Ind_{[0,s]}) + \frac{1}{2} A(s,\Ind_{[0,s]}, \psi(s,T,\mu)) \, ds.
    \end{equation*}
\end{theorem}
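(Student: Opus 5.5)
We follow the classical route of Duffie--Filipović--Schachermayer and Keller-Ressel--Schmidt--Wardenga, adapted to path space. Fix $T$ and $\mu\in\cU(T)$, and set $M(t):=\exp\bigl(\phi(t,T,\mu)+\Psi(t,T,\mu)\cdot X_{[0,t]}\bigr)$. By \eqref{eq:affine-path}, $M$ is a (bounded, since $\mu\in\cU(T)$) martingale. By Assumption~\ref{ass:sm-diff-Psi} and absolute continuity of $\phi$, the exponent $Z(t)=\phi(t)+\Psi(t)\cdot X_{[0,t]}$ is a continuous semimartingale with
\[
dZ(t)=\bigl(\phi'(t)-\alpha(t,X_{[0,t]})+\psi(t)b(t,X_{[0,t]})\bigr)dt+\psi(t)\sigma(t,X_{[0,t]})\,dW(t),
\]
and $d\langle Z\rangle=\psi a\psi^\top dt$. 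Itô's formula for $e^{Z}$ gives $dM=M\,dZ+\tfrac12 M\,d\langle Z\rangle$. Since $M\neq0$, the martingale property forces the finite variation part to vanish $dt\otimes dP$-a.e.:
\[
\phi'(t)-\alpha(t,X_{[0,t]})+\psi(t)\,b(t,X_{[0,t]})+\tfrac12\psi(t)\,a(t,X_{[0,t]})\,\psi(t)^\top=0 .
\]
Because $b,a$ are continuous on $\ccC_T^d$ and $\alpha$ is affine, this identity holds for all $x\in\supp(X_{[0,t]})$ (possibly after a right-continuity/density argument in $t$, using that $\psi$ is càglàd). This is the key identity.

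Next we extract the affine structure. Write $\alpha$ as an affine functional $\alpha^0(t)+\sum_i\alpha^i(t)\cdot x^i$. Then $x\mapsto\psi b(t,x)+\tfrac12\psi a(t,x)\psi^\top$ agrees on $\supp(X_{[0,t]})$ with an affine functional. To pass to all of $C([0,t],S)$ via Condition~\ref{cond:full support_inhom}, we need to know that $b$ and $a$ themselves are affine, not merely agree on the support. I would first choose $\mu=iu\delta_t$ with $T=t$ (via Lemma~\ref{lm:also-path-dependent}), and more generally $\mu=\epsilon\nu$ with small $\epsilon$. Lemma~\ref{lem:beta_inhom} gives $\psi(t,t,iu\delta_t)=iu^\top$, so the identity at time $t$ becomes $iu^\top b(t,x)-\tfrac12u^\top a(t,x)u=\alpha(t,x)-\phi'(t)$, which is affine in $x$ for each $u$. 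Separating real and imaginary parts, and scaling $u\mapsto su$ to separate orders in $s$, shows that $u^\top b(t,x)$ and $u^\top a(t,x)u$ are affine on $\supp(X_{[0,t]})$. Polarization then gives affine representations of $b$ and $a$ there.

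The main obstacle is this step. It is delicate because the identity only holds $dt$-a.e., and evaluating at $s=t$ of a horizon-$t$ transform requires care: the identity at times $s<t$ near $t$, combined with left-continuity of $\psi$ and continuity of $b$ and $a$, should give the limit. The linear growth Assumption~\ref{LG-Condition} supplies the integrability needed for the local martingale parts to be true martingales and for dominated convergence. The resulting affine maps define bounded linear functionals, which are represented by measures $b^i(t)$ and $a^i(t)$ through Riesz--Markov (continuity follows from linear growth). The convex-hull part of Condition~\ref{cond:full support_inhom} extends \eqref{eq:affine representation_inhom} and \eqref{eq:affine representation2_inhom} to all of $C([0,t],S)$.

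Finally, we derive the Riccati equations. We insert the affine forms into the key identity for general $\mu$; by the full span of $S$ and the uniqueness of Lemma~\ref{lem:phi and psi uniquely determined}, we match constant and linear parts. The constant part gives \eqref{eq:riccati_phi_thm} with $\alpha^0=\phi'+\psi b^0+\tfrac12\psi a^0\psi^\top$; I expect $\alpha^0=0$ after absorbing constants, using $\phi(T)=0$. The linear part gives $\alpha(s,x)=\sum_i\psi(s)(b^i(s)\cdot x^i)+\tfrac12\psi(s)(a^i(s)\cdot x^i)\psi(s)^\top$, which substituted into \eqref{eq:representation alpha beta_inhom} yields \eqref{eq:main-thm-fnctional-psi}. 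For \eqref{eq:riccati_beta_thm}, we compare \eqref{eq:main-thm-fnctional-psi} at $t$ and $T$ using $\Psi(T)=\mu$. By Fubini, $\int_t^T\alpha(s,X_{[0,s]})ds=\int_{[0,T]}X(r)\,\nu_t(dr)$, where $\nu_t$ collects the measures $B(s,\cdot)$ and $A(s,\cdot,\psi)$. Integration by parts for the deterministic finite variation $\psi$ gives $\int_t^T\psi\,dX=\psi(T)X(T)-\psi(t)X(t)-\int X\,d\psi$. Equating the coefficients of the path measures, which uniqueness permits via Lemma~\ref{lem:phi and psi uniquely determined} and full support, and evaluating the mass of $[t,T]$ (testing against the indicator path $\Ind_{[t,s]}$), yields $\psi(t)=\mu([t,T])+\int_t^T\psi B(s,\Ind_{[t,s]})+\tfrac12A(s,\Ind_{[t,s]},\psi)\,ds$. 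The same computation at $t=0$ yields the formula for $\Psi(0,T,\mu)(\{0\})$.
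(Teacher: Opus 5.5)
Up to the identification of $b$ and $a$, your argument is the paper's. You use It\^o's formula for $e^{Z}$ and the vanishing of the drift (Lemma~\ref{lem:dG_inhom}), a left limit at the horizon using that $\psi$ is c\`agl\`ad, and $\mu=iu\delta_t$ with Lemma~\ref{lem:beta_inhom} and the truncation Lemma~\ref{lm:also-path-dependent}. The real/imaginary split is equivalent to the paper's $F_u\pm F_{-u}$, and polarization and the extension via Condition~\ref{cond:full support_inhom} follow the paper. The gaps are in the Riccati part.

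\textbf{First gap: $\alpha^0=0$.} Equation \eqref{eq:riccati_phi_thm} is equivalent to $\alpha^0\equiv 0$, and your justification (``absorbing constants, using $\phi(T)=0$'') does not work. $\phi$ is unique by Lemma~\ref{lem:phi and psi uniquely determined}, and $\alpha$ is fixed along $X$ by \eqref{eq:representation alpha beta_inhom}: a shift $\alpha\mapsto\alpha+c$ changes the finite-variation part by $\int_0^t c(s)\,ds$. So there is nothing to absorb, and the terminal value $\phi(T,T,\mu)=0$ carries no information about $\alpha^0$. What forces $\alpha^0=0$ is linearity in the path. Because $\psi$ is deterministic, c\`agl\`ad and of finite variation, you can integrate $\int_0^t\psi\,dX$ by parts. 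This turns \eqref{eq:representation alpha beta_inhom} into an identity between continuous affine functionals of $x$, valid on $\supp(X_{[0,t]})$. Condition~\ref{cond:full support_inhom} extends it to $C([0,t],S)$, which contains $x\equiv 0$, and there it reads $\int_0^t\alpha^0(s)\,ds=0$ for all $t$. The paper reads $\phi'$ and $\alpha$ off \eqref{eq:13_inhom} as if $\alpha$ had no constant part; this step is what justifies doing so.

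\textbf{Second gap: the atom at $t$.} Subtracting the representation at $t$ from the one at $T$ leaves $\mu-\Psi(t,T,\mu)|_{[0,t]}$ on the left. When you then evaluate on $[t,T]$ (test with $e_j\Ind_{[t,T]}$), the integration-by-parts contributions cancel: $\psi(T)-\psi(t)-\bigl(\psi(T)-\psi(t)\bigr)=0$. What you actually obtain is
\[
\Psi^j(t,T,\mu)(\{t\})=\mu^j([t,T])+\int_t^T\Bigl(\psi(s)\,b^j(s,[t,s])+\tfrac12\,\psi(s)\,a^j(s,[t,s])\,\psi(s)^\top\Bigr)\,ds .
\]
This is why your $t=0$ case correctly gives the formula for $\Psi(0,T,\mu)(\{0\})$. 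For $t>0$ it is not yet \eqref{eq:riccati_beta_thm}: you still need $\Psi(t,T,\mu)(\{t\})=\psi(t,T,\mu)$. That follows by applying the same identification to the representation on $[0,t]$ and evaluating on $\{t\}$. The only mass at $t$ there comes from $\psi(t)x(t)$, because $-\int_{[0,t)}x\,d\psi$ lives on $[0,t)$. Likewise, $A\mapsto\int_0^t\bigl(\psi(s)b^j(s,A\cap[0,s])+\tfrac12\psi(s)a^j(s,A\cap[0,s])\psi(s)^\top\bigr)\,ds$ does not charge $\{t\}$, since $\{t\}\cap[0,s]=\emptyset$ for $s<t$. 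The paper avoids this issue by comparing $0$ with $T$: for $t>0$ the test path vanishes at $0$, so $\Psi(0,T,\mu)\cdot x_{[0,0]}$ drops out and integration by parts produces $\psi^j(t)$ directly.

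Once this is repaired, your passage to indicators is a valid and cleaner alternative to the paper's. You view both sides as integrals against finite signed measures via Fubini and extend from $C([0,T],S)$ to $C([0,T],\R^d)$ by the span of $S$. You then use uniqueness in the Riesz--Markov theorem before evaluating on Borel sets. The paper instead approximates $\Ind_{[s_1,s_2]}$ by continuous functions in $L^1$ of a dominating measure. Your route rests on Riesz uniqueness and on the same time-integrability of $\|b^i(s)\|_{TV}$, $\|a^i(s)\|_{TV}$ that the paper takes from Assumption~\ref{LG-Condition}; it does not need Lemma~\ref{lem:phi and psi uniquely determined}.
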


\bigskip

Note that $b(t,x)$, $a(t,x)$ are non-anticipative functionals and therefore  in the affine representation \eqref{eq:affine representation_inhom} it holds that
$$ b^i(t) \cdot x^i  = b^i(t) \cdot x_{[0,t]}^i = \int_0^t x^i(s) \, b^i(t,ds) = \sum_{j=1}^d e_j \int_0^t x^i(s) \, b^{i,j}(t,ds),$$
$0 \leq t \leq T$ and $i=1,\dots,d$.
This holds similarly for \eqref{eq:affine representation2_inhom}.

For the following, we define the complex-valued process $G$ by
\begin{align}
    G(t,T,\mu) :=  \psi(t,T,\mu) b(t,X_{[0,t]}) + \frac{1}{2}  \psi(t,T,\mu)a(t,X_{[0,t]}) \psi(t,T,\mu)^\top,
\end{align}
for $0 \leq t \leq T$ and $\mu \in \cU(T)$. Note that since $t \to \psi(t,T,\mu)$ is by assumption c\`agl\`ad that $t \mapsto G(t,T,\mu)$ is also c\`agl\`ad.
The next lemma shows that already $G$  possesses the affine property.

\begin{lemma}\label{lem:dG_inhom}
    Let $X$ be an affine process. Then, for $\mu \in \cU(T)$, and $ 0 \leq t \leq T$,
    \begin{align}\label{eq:13_inhom}
        G(t,T,\mu) + \phi'(t,T,\mu) - \alpha(t,X_{[0,t]},T,\mu)= 0 \quad dt \otimes dP\text{-a.s},
    \end{align}
    where $\phi'$ is a Lebesgue density of $\phi$.
\end{lemma}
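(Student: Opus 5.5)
The plan is to use the fact that the conditional Fourier--Laplace transform in \eqref{eq:affine-path} is a martingale. Fix $\mu \in \cU(T)$ and set
\[
M(t) := E[e^{\mu\cdot X_{[0,T]}}\mid\ccF_t] = \exp(Y(t)), \qquad Y(t):=\phi(t,T,\mu)+\Psi(t,T,\mu)\cdot X_{[0,t]} .
\]
Then $M$ is a bounded martingale, since $|e^{\mu\cdot X_{[0,T]}}|\le 1$ by the definition of $\cU(T)$. I will compute its semimartingale decomposition via It\^o's formula applied to $\exp(Y)$ and then read off that the finite variation part must vanish.

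First, I derive the decomposition of $Y$. By Definition~\ref{def:path-dependent-affine}(i), $\phi(\cdot,T,\mu)$ is absolutely continuous with density $\phi'$. Assumption~\ref{ass:sm-diff-Psi} gives the representation \eqref{eq:representation alpha beta_inhom}. Inserting the dynamics \eqref{path_dependent_sde} of $X$ yields
\[
dY(t) = \bigl(\phi'(t) - \alpha(t,X_{[0,t]}) + \psi(t) b(t,X_{[0,t]})\bigr)\,dt + \psi(t)\sigma(t,X_{[0,t]})\,dW(t),
\]
so that $d\langle Y\rangle(t) = \psi(t) a(t,X_{[0,t]})\psi(t)^\top dt$. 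Since the exponential is entire, I apply the complex It\^o formula by splitting into real and imaginary parts, or equivalently by working componentwise:
\[
dM(t) = M(t)\Bigl(\phi'(t)-\alpha(t,X_{[0,t]})+G(t,T,\mu)\Bigr)dt + M(t)\psi(t)\sigma(t,X_{[0,t]})\,dW(t).
\]

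Second, I use uniqueness of the canonical decomposition. $M$ is a martingale, and the stochastic integral is a local martingale. Hence the continuous finite variation process $\int_0^\cdot M(s)\bigl(\phi'(s)-\alpha(s,X_{[0,s]})+G(s)\bigr)ds$ is a continuous local martingale of finite variation starting at $0$. It is therefore indistinguishable from $0$, which gives $M(t)\bigl(\phi'+G-\alpha\bigr)(t)=0$ $dt\otimes dP$-a.e. Since $M=e^{Y}$ never vanishes, we can divide by $M$, and \eqref{eq:13_inhom} follows.

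The main obstacle is integrability: I must check that the $dt$-integrand is locally integrable so that the drift integral is well defined and a genuine finite variation process. I would justify this as follows. $\phi'\in L^1$. $\psi$ is c\`agl\`ad of finite variation, hence bounded on $[0,T]$. $b$ and $a$ are continuous and satisfy Assumption~\ref{LG-Condition}, so $\int_0^T|G|\,ds<\infty$ a.s. The term $\int|\alpha|\,ds<\infty$ is implicit in the well-posedness of \eqref{eq:representation alpha beta_inhom}. If needed, I would localize with stopping times $\tau_n=\inf\{t:\sup_{s\le t}\|X(s)\|\ge n\}$ and conclude on each $[0,\tau_n]$ before letting $n\to\infty$.
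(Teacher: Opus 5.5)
Your proof is correct and follows the paper's argument: you apply It\^o's formula to the martingale $M=\exp(\phi+\Psi\cdot X_{[0,t]})$ using the semimartingale representation of $(\Psi,X)$ and $d\langle X\rangle = a\,dt$, conclude that the finite-variation part vanishes, and divide by $M\neq 0$. One small remark: Assumption~\ref{LG-Condition} is not a hypothesis of this lemma and is not needed, because continuity of $b$ and $a$ on $\ccC_T^d$, together with continuity of the paths of $X$, already makes $t\mapsto G(t,T,\mu)$ pathwise bounded on $[0,T]$.
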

\begin{proof}
    We fix $\mu \in \cU(T)$ and consider the martingale
    $$ M_t := E\big[ e^{ \, \mu \cdot X_{[0,T]}} \mid \ccF_t \big] = \exp\big( \phi(t,T,\mu) + \Psi(t,T,\mu) \cdot X_{[0,t]} \big), \quad 0 \leq t \leq T.$$
    The classical It\^o-formula together with the semimartingale representation of $(\Psi,X)$, shows that
    \begin{align}
        \begin{split}
            \frac{dM_t}{M_t} & = \big( \phi'(t,T,\mu) -  \alpha(t,X_{[0,t]},T,\mu) \big)dt + \psi(t,T,\mu)\, dX(t) \\
                             & + \frac 1 2 \sum_{i,j=1}^d\psi^i(t,T,\mu)\psi^j(t,T,\mu)d \langle X^{i,j}\rangle(t).
        \end{split}
    \end{align}
    Since $X$ is a path-dependent process, Equation \eqref{path_dependent_sde} yields that $dX(t) = b(t,X_{[0,t]})dt + \sigma(t,X_{[0,t]}) dW(t)$ and hence
    $d \langle X \rangle(t) = a(t,X_{[0,t]}) \, dt$. Moreover, since $M$ is a martingale, its finite variation part vanishes almost-surely which
    implies that Equation \eqref{eq:13_inhom} holds $dt \otimes dP$-almost surely.
\end{proof}

\begin{proof}[Proof of Theorem \ref{thm:affine1_inhom}]
    We begin by observing that, by Lemma \ref{lem:dG_inhom}, Equation \eqref{eq:13_inhom} holds $dt\otimes dP$-a.s. on $[0,T]$ and, since $t \mapsto G(t,T,\mu)$ is c\`agl\`ad, the processes $t \mapsto \phi'(t,T,\mu) - \alpha(t,X_{[0,t]},T,\mu)$ admits a c\`agl\`ad version. Replacing $\phi' - \alpha$ by this version, we may, by left-continuity, therefore assume that \eqref{eq:13_inhom} holds $P$-almost surely for all $t \in (0,T]$. In particular at time $t = T$ this yields
    \begin{align}\label{eq:proof-alpha-phi}
        \psi(T,T,\mu) b(T,X_{[0,T]}) + \frac{1}{2} \psi(T,T,\mu)a(T,X_{[0,T]}) \psi(T,T,\mu)^\top = -\phi'(T,T,\mu) + \alpha(T,X_{[0,T]},T,\mu), \quad P\text{-a.s.}
    \end{align}
    Let $u \in \R^d$. By Lemma~\ref{lem:beta_inhom}, for $\mu \in iu\delta_T$ we have for all $x \in \supp(X_{[0,T]})$
    \begin{equation}\label{eq:affine?}
        iu^\top b(T,x) - \frac{1}{2}  u^\top a(T,x) u = -\phi'(T,T,i u \delta_{T}) + \alpha(T,x,T,i u \delta_{T}).
    \end{equation}
    Since $\alpha$ is affine in $x$, it follows that for every $u \in \R^d$ the functional
    \begin{equation}\label{eq:F_u}
        F_u(x) := iu^\top b(T,x) - \frac{1}{2}  u^\top a(T,x) u, \quad x \in \supp(X_{[0,T]})
    \end{equation}
    is also affine.
    The support Condition \ref{cond:full support_inhom} now implies that $F_u(x)$ is affine for all $x \in C([0,T],S)$. We now separate the drift and diffusion terms and conclude that they are affine. Since sums and differences of affine functionals remain affine,
    \begin{align*}
        F_{u}(x)-F_{-u}(x) & = 2iu^\top b(T,x) \\
        F_{u}(x)+F_{-u}(x) & = -u^\top a(T,x)u \\
    \end{align*}
    are affine in $x$.
    Choosing the $k$-th unit vector for $u$, we conclude that each component $b_k(T,x)$ of the vector $b(T,x)$ is affine.
    For the matrix-valued functional $a(T,\cdot)$ we identify the entries by the polarization identity
    \begin{equation*}
        u^\top a(T,x) v = \frac{1}{4}\bigl((u+v)^\top a(T,x)(u+v) - (u-v)^\top a(T,x)(u-v)\bigr), \quad u,v \in \R^d.
    \end{equation*}
    Choosing now the $k$-th unit vector for $u$ and the $j$-th unit vector for $v$ this identity implies that the matrix-entries
    $u^\top a(T,x)v = a_{ij}(T,x)$ are affine. Therefore, $b(T,\cdot)$ and $a(T,\cdot)$ are affine functionals on $C([0,T],\R^d)$.
    This yields representation \eqref{eq:affine representation_inhom} for the terminal time $T$ by letting the measures $b^i(T,\cdot)$ and $a^i(T,\cdot)$ denote the representing measures by the Riesz--Markov theorem.
    Having established the affine property of $b(T,\cdot)$ and $a(T,\cdot)$ for every fixed $T$, we now extend this property to all $t \in (0,T]$.
    Fix such a $t$ and consider the truncated process $X_{[0,t]}$.
    By Lemma~\ref{lm:also-path-dependent}, this process is again affine up to time $t$ for all $\cU(t)$, so the argument above applies with $T$ replaced by $t$. We conclude with this that the representation \eqref{eq:affine representation_inhom} holds for all $t \in (0,T]$ and $x \in C([0,T],S)$. Finally, since $b$ and $a$ were assumed to be continuous and $X(0)$ is a deterministic initial condition, the affine representation extends to $t = 0$ as well.

    Assume for the remainder of the proof for all $(t,x) \in \ccC_T(S)$ that $b(t,x)$ and $a(t,x)$ satisfy the relation as given in \eqref{eq:affine representation_inhom} and \eqref{eq:affine representation2_inhom}, respectively. Then, as before, it follows from
    Equation \eqref{eq:13_inhom} that for all $(t,x) \in \ccC_T(S)$
    \begin{align*}
        \phi'(t,T,\mu)            & = -\psi(t,T,\mu)b^0(t) - \frac{1}{2} \psi(t,T,\mu)a^0(t) \psi(t,T,\mu)^\top, \quad dt\text{-a.s.}                                                               \\
        % \alpha(t,x,T,\mu) & = \sum_{i=1}^d \psi(t,T,\mu) (b^i(t) \cdot x^i) + \frac{1}{2}\sum_{i=1}^d \psi(t,T,\mu) (a^i(t) \cdot x^i) \psi(t,T,\mu)^\top,
        \alpha(t,x_{[0,t]},T,\mu) & = \sum_{i = 1}^{d} \psi(t,T,\mu) ( b^i(t) \cdot x^i_{[0,t]}) + \frac{1}{2} \psi(t,T,\mu) ( a^i(t) \cdot x^i_{[0,t]}) \psi(t,T,\mu)^\top, \quad dt\text{-a.s.},
    \end{align*}
    which establishes \eqref{eq:riccati_phi_thm} and identifies $\alpha$. A simple integration now yields
    \begin{equation*}
        \int_{0}^{t} \alpha(s,x_{[0,s]},T,\mu) \, ds = \int_{0}^{t} \sum_{i = 1}^{d} \psi(s,T,\mu) ( b^i(s) \cdot x^i_{[0,s]}) + \frac{1}{2} \psi(s,T,\mu) ( a^i(s) \cdot x^i_{[0,s]}) \psi(s,T,\mu)^\top \, ds.
    \end{equation*}
    Combining this result with the semimartingale representation given by Assumption~\ref{ass:sm-diff-Psi} establishes Equation \eqref{eq:main-thm-fnctional-psi}.
    At time $t = T$ and by assumption that $t \mapsto \psi(t,T,\mu)$ is of finite variation, Equation \eqref{eq:main-thm-fnctional-psi} reads with the integration by parts formula
    \begin{align*}
        \Psi(T,T,\mu) \cdot x_{[0,T]} & = \Psi(0,T,\mu) \cdot x_{[0,0]} + \psi(T,T,\mu)x(T) - \psi(0,T,\mu)x(0) - \int_{0}^{T} x(s) \, d\psi(s,T,\mu)                                                     \\
                                      & \quad - \int_{0}^{T} \sum_{i = 1}^{d} \psi(s,T,\mu) ( b^i(s) \cdot x^i_{[0,s]}) + \frac{1}{2} \psi(s,T,\mu) ( a^i(s) \cdot x^i_{[0,s]}) \psi(s,T,\mu)^\top \, ds,
    \end{align*}
    for all $x_{[0,T]} \in \supp(X_{[0,T]})$. Taking convex combinations, this can be extended to hold for all $x \in C([0,T],S)$ by Condition~\ref{cond:full support_inhom}.

    Moreover, by integration by parts, the identity depends only on $x$ through point evaluations and integrals against finite signed measures, whose total variation is integrable in time by Assumption~\ref{LG-Condition} together with \eqref{eq:affine representation_inhom} and \eqref{eq:affine representation2_inhom}.
    Let $0 \leq s_1 \leq s_2 \leq T$ and choose continuous functions $x_n \in C([0,T],\R)$ with $0 \leq x_n \leq 1$ such that
    \begin{equation*}
        x_n(u)\to \Ind_{[s_1,s_2]}(u)\quad \text{for all } u \in [0,T],
    \end{equation*}
    in particular $x_n(0)\to \Ind_{[s_1,s_2]}(0)$ and $x_n(T)\to \Ind_{[s_1,s_2]}(T)$.
    The endpoint terms converge by construction. For the other integral terms
    we have to be careful, since the measures which appear in the
    identity may have atoms. We therefore choose the
    approximating functions $x_n$ in such a way that they converge to
    $\Ind_{[s_1,s_2]}$ in $L^1$ with respect to one finite measure which
    dominates all measures appearing in the identity. For example, we can take
    a finite measure which dominates $|\mu|$, the total variation of
    the Stieltjes measure $d\psi(\cdot,T,\mu)$, the endpoint atoms, and also
    the measures \begin{equation*}
        A\mapsto
        \int_0^T
        \left(
        |b^i(r)|(A\cap[0,r]) + |a^i(r)|(A\cap[0,r])
        \right)dr,
        \quad i=1,\dots,d,
    \end{equation*}
    where $|b^i(r)|$ and $|a^i(r)|$ denote the total variation measures of
    the signed vector-valued and matrix-valued measures $b^i(r,\cdot)$ and
    $a^i(r,\cdot)$, respectively.
    Such an approximation is possible by regularity of finite Borel measures on $[0,T]$. With this choice, all
    integrals against the signed measures appearing above converge, and hence
    the identity also holds for $x=1_{[s_1,s_2]}$.
    Since $\Psi(T,T,\mu) = \mu$, we get for $t \in (0,T]$, $j \in \{1,\dotsc,d\}$ and $x^j = e_j\Ind_{[t,T]}$
    \begin{equation*}
        \mu^j([t,T]) = \psi^j(t,T,\mu) - \int_t^T \psi(s,T,\mu)\bigl(b^j(s)\cdot \Ind_{[t,s]}\bigr) + \frac{1}{2} \psi(s,T,\mu)\bigl(a^j(s)\cdot \Ind_{[t,s]}\bigr)\psi(s,T,\mu)^\top \,ds.
    \end{equation*}
    Since $j$ was arbitrary, this yields the Riccati equation
    \begin{equation*}
        \psi(t,T,\mu) = \mu([t,T]) + \int_{t}^{T} \psi(s,T,\mu)B(s,\Ind_{[t,s]}) + \frac{1}{2} A(s,\Ind_{[t,s]}, \psi(s,T,\mu)) \, ds, \quad t \in (0,T].
    \end{equation*}
    For time $t = 0$, the same argument with $x^j = e_j \Ind_{[0,T]}$ yields the boundary condition
    \begin{equation*}
        \Psi(0,T,\mu)(\{0\}) = \mu([0,T]) + \int_{0}^{T} \psi(s,T,\mu)B(s,\Ind_{[0,s]}) + \frac{1}{2} A(s,\Ind_{[0,s]}, \psi(s,T,\mu)) \, ds.
    \end{equation*}
\end{proof}

The next lemma shows that $\phi$ and $\Psi$ satisfy the semi-flow property.

\begin{lemma}\label{lm:semi-flow}
    Let $X$ be an $S$-valued affine process satisfying the support Condition~\ref{cond:full support_inhom}. Then the following holds:
    \begin{enumerate}[(i)]
        \item the function $\mu \mapsto \phi(t,T,\mu)$ maps $\cU(T)$ to $\C_{\leq 0}$ and $\mu \mapsto \Psi(t,T,\mu)$ maps $\cU(T)$ to $\cU(T)$,
        \item the functions $\phi$ and $\Psi$ satisfy the semi-flow property, i.e.,
              \begin{align}
                  \phi(s,T,\mu) & = \phi(t,T,\mu) + \phi(s,t, \Psi(t,T,\mu)), &  & \phi(T,T, \mu) = 0,  \\
                  \Psi(s,T,\mu) & = \Psi(s,t,\Psi(t,T,\mu)),                  &  & \Psi(T,T,\mu) = \mu,
              \end{align}
              for all $0 \leq s \leq t \leq T$.
    \end{enumerate}

\end{lemma}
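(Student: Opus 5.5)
For (i) I will show that the exponent on the right-hand side of \eqref{eq:affine-path} has non-positive real part. Since $\mu\in\cU(T)$ and $X$ takes values in $S$, we have $\Real(\mu\cdot X_{[0,T]})\le 0$, hence $|e^{\mu\cdot X_{[0,T]}}|\le 1$. Taking conditional expectations gives
\[
\Real\bigl(\phi(t,T,\mu)+\Psi(t,T,\mu)\cdot X_{[0,t]}\bigr)\le 0 \quad P\text{-a.s.}
\]
This inequality passes to every $x\in\supp(X_{[0,t]})$, because $x\mapsto \Real(\phi+\Psi\cdot x)$ is continuous in the sup-norm: $\Psi(t,T,\mu)$ is a finite measure. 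It also passes to convex combinations, since the expression is affine in $x$. By Condition~\ref{cond:full support_inhom} it therefore holds for all $x\in C([0,t],S)$.

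Next I use that $S$ is a cone. Replace $x$ by $\lambda x$ with $\lambda\ge 0$ and let $\lambda\to\infty$; this forces $\Real(\Psi(t,T,\mu)\cdot x)\le 0$. Taking $\lambda=0$ (note $0\in S$) gives $\Real\phi(t,T,\mu)\le 0$, so $\phi$ maps into $\C_{\le 0}$. Since only the restriction of $\Psi(t,T,\mu)$ to $[0,t]$ is determined, as in Lemma~\ref{lem:phi and psi uniquely determined}, I replace $\Psi(t,T,\mu)$ by $\Ind_{[0,t]}\Psi(t,T,\mu)$. This changes nothing in \eqref{eq:affine-path}. Then $\Real(\Psi\cdot x)\le 0$ for all $x\in C([0,T],S)$, that is, $\Psi(t,T,\mu)\in\cU(t)\subset\cU(T)$. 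Without this normalization the claim could fail on $(t,T]$, so I will state it explicitly.

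For (ii), fix $s\le t\le T$ and use the tower property:
\[
E[e^{\mu\cdot X_{[0,T]}}\mid\ccF_s]=E\bigl[\exp(\phi(t,T,\mu)+\Psi(t,T,\mu)\cdot X_{[0,t]})\mid\ccF_s\bigr].
\]
By (i), $\nu:=\Psi(t,T,\mu)\in\cU(t)$. Lemma~\ref{lm:also-path-dependent} with $T'=t$ then gives
\[
E[e^{\nu\cdot X_{[0,t]}}\mid\ccF_s]=\exp\bigl(\phi(s,t,\nu)+\Psi(s,t,\nu)\cdot X_{[0,s]}\bigr).
\]
Pulling out the deterministic factor $e^{\phi(t,T,\mu)}$, I obtain two exponential-affine representations of the same conditional expectation. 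One has coefficients $(\phi(s,T,\mu),\Psi(s,T,\mu))$, the other $(\phi(t,T,\mu)+\phi(s,t,\nu),\Psi(s,t,\nu))$. The terminal conditions $\phi(T,T,\mu)=0$ and $\Psi(T,T,\mu)=\mu$ were already noted after Lemma~\ref{lem:phi and psi uniquely determined}.

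The main obstacle is identifying the coefficients, since Lemma~\ref{lem:phi and psi uniquely determined} needs continuity in $\mu$ for its connectedness argument. I would run that argument directly on the map
\[
\mu\mapsto p(\mu)+q(\mu)\cdot X_{[0,s]},\qquad p:=\phi(t,T,\mu)+\phi(s,t,\Psi(t,T,\mu))-\phi(s,T,\mu),\quad q:=\Psi(s,t,\Psi(t,T,\mu))-\Psi(s,T,\mu).
\]
This map takes values in $2\pi i\Z$. It is continuous on the connected set $\cU(T)$: continuity of compositions follows from property (ii) of Definition~\ref{def:path-dependent-affine} and the weak$^\ast$ continuity of $\mu\mapsto\Ind_{[0,t]}\Psi(t,T,\mu)$. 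If that truncation is not weak$^\ast$ continuous, I would instead argue pointwise, using that the parameters lie on a continuous path inside $\cU(T)$. The map vanishes at $\mu=0$, because $\Psi(\cdot,\cdot,0)=0$ and $\phi(\cdot,\cdot,0)=0$. So it is identically zero. The full-support and cone argument from the end of Lemma~\ref{lem:phi and psi uniquely determined} then gives $p=0$ and $q|_{[0,s]}=0$, which is the semi-flow property, with $\Psi$ understood up to its part on $(s,T]$.
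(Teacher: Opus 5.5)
Your proposal is correct and follows the paper's proof. For (i) it uses the modulus-one bound and then the support, convex-hull and cone argument; for (ii) it uses the tower property, Lemma~\ref{lm:also-path-dependent}, and the connectedness argument from Lemma~\ref{lem:phi and psi uniquely determined}. Your normalization $\Psi(t,T,\mu)\mapsto \Ind_{[0,t]}\Psi(t,T,\mu)$ is a genuine improvement, since the argument only constrains $\Psi(t,T,\mu)$ on $[0,t]$; the continuity in $\mu$ of the composite map that you flag is simply asserted in the paper, so your vague fallback is not needed to match its level of rigor.
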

\begin{proof}
    \begin{enumerate}[(i)]
        \item By equation \eqref{eq:affine-path} we have for all $\mu \in \cU(T)$
              \begin{equation*}
                  E \left[e^{\mu\cdot X_{[0,T]}} \mid \ccF_t\right] = \exp \left(\phi(t,T,\mu)+\Psi(t,T,\mu) \cdot X_{[0,t]}\right), \quad 0 \leq t \leq T.
              \end{equation*}
              Since $\Real(\mu \cdot X) \leq 0$ $P$-a.s., we have that the left-hand side is bounded by one in absolute value.
              Thus,
              \begin{equation*}
                  \Real\left(\phi(t,T,\mu)+\Psi(t,T,\mu)\cdot X_{[0,t]}\right) \leq 0 \quad P\text{-a.s.}
              \end{equation*}
              and also $\Real \left(\phi(t,T,\mu)+\Psi(t,T,\mu) \cdot x\right) \leq 0$ for all $x \in \supp(X_{[0,t]})$.
              Taking now convex combinations of these inequalities and using the support Condition \ref{cond:full support_inhom} we obtain that the inequality must hold for all $x \in C([0,t],S)$. Since $S$ is a cone this in turn implies that
              $\Real \phi(t,T,\mu) \leq 0$ and $\Psi(t,T,\mu) \in \cU(T)$.
        \item Let $0 \leq s \leq t \leq T$ and $\mu \in \cU(T)$.
              By the affine transform formula,
              \begin{equation}\label{eq:semi-flow-affine}
                  E\left[e^{\mu \cdot X_{[0,T]}} |\, \ccF_{s}\right] = \exp(\phi(s,T,\mu) + \Psi(s,T,\mu) \cdot X_{[0,s]}).
              \end{equation}
              On the other hand, applying iterated expectations to the left-hand side of \eqref{eq:semi-flow-affine}
              \begin{align*}
                  E\left[e^{\mu \cdot X_{[0,T]}} |\, \ccF_{s}\right] & = E\left[E\left[e^{\mu \cdot X_{[0,T]}} |\, \ccF_{t}\right] |\, \ccF_{s}\right]                  \\
                                                                     & = E\left[\exp\left(\phi(t,T,\mu)+ \Psi(t,T,\mu) \cdot X_{[0,t]} \right)|\, \ccF_{s}\right]       \\
                                                                     & = \exp\big(\phi(t,T,\mu)\big)E\left[e^{\Psi(t,T,\mu) \cdot X_{[0,t]}}|\, \ccF_{s}\right]         \\
                                                                     & =\exp\Bigl(\phi(t,T,\mu)+\phi(s,t,\Psi(t,T,\mu)) + \Psi(s,t,\Psi(t,T,\mu))\cdot X_{[0,s]}\Bigr).
              \end{align*}
              Note that the exponent on the right hand side is continuous in $\mu$ (under the weak$^\ast$-topology) and that the same holds true for \eqref{eq:semi-flow-affine}. By the same argument as in the proof of Lemma~\ref{lem:phi and psi uniquely determined}, we conclude that
              \begin{equation*}
                  \phi(s,T,\mu) + \Psi(s,T,\mu) \cdot x_{[0,s]} = \phi(t,T,\mu) + \phi(s,t,\Psi(t,T,\mu)) + \Psi(s,t,\Psi(t,T,\mu))\cdot x_{[0,s]},
              \end{equation*}
              for all $x \in C([0,s],S)$. Since the linear hull of $S$ is $\R^d$ the semi-flow property follows.
    \end{enumerate}
\end{proof}

\section{Affine Characterization of Path-Dependent SDEs}\label{sec:reverse-direction}

In this section we show that if the coefficients of the path-dependent SDE are affine, then the corresponding Fourier-Laplace transform admits an exponential-affine representation.
Let $S\subseteq \R^d$ be a closed state space and let
$ b:\ccC_T(S)\to \R^d$, $a:\ccC_T(S)\to \bS_d^+$
be continuous non-anticipative functionals. Let $\sigma:\ccC_T(S)\to\R^{d\times m}$
be a measurable square root of $a$, i.e.
$\sigma(t,x)\sigma(t,x)^\top = a(t,x)$ for all $(t,x)\in\ccC_T(S)$.
For instance, one may take $m=d$ and $\sigma(t,x)= \sqrt{a(t,x)}$.
Let $X$ be an $S$-valued
continuous semimartingale satisfying
\begin{equation}\label{path_dependent_sde_reverse_dir}
    X(t) = X(0) + \int_{0}^{t} b(s,X_{[0,s]}) \, ds + \int_{0}^{t} \sigma(s,X_{[0,s]}) \, dW(s), \quad 0 \leq t \leq T,
\end{equation}
By $\lVert \cdot\rVert_{TV}$ we denote the total variation norm on $\ccM([0,T],\R^d)$ or on $\ccM([0,T],\R^{d\times d})$, respectively.

\begin{assumption}\label{ass:affine-coeff}
    Suppose there exists $b^0: [0,T]  \to \R^d$,\ $a^0 : [0,T] \to \bS_d$ and $b^i: [0,T] \to \ccM([0,T],\R^d)$, $a^i : [0,T] \to \ccM([0,T],\bS_d)$, $i=1,\dots,d$, such that
    \begin{align}
        b(t,x) & = b^0(t) + \sum_{i=1}^d  b^i(t) \cdot x^i, \\
        a(t,x) & = a^0(t) + \sum_{i=1}^d  a^i(t) \cdot x^i,
    \end{align}
    for all $(t,x) \in \ccC_T(S)$.
    Assume additionally that
    $$\int_0^T \|b^i(s)\|_{TV}\,ds + \int_0^T \|a^i(s)\|_{TV}\,ds <\infty,\quad \forall i = 1,\dotsc,d.$$
\end{assumption}

Note that positivity of $a$ is only imposed on the state space $\ccC_T(S)$.
This is essential: if positivity were imposed on the whole linear path space
$\ccC_T^d$, then the affine dependence of $a$ on the path would necessarily
be trivial; see Remark~\ref{rem:nontrivial-affine-diffusion-support}.

\begin{lemma}\label{product-rule-lemma}
    Let $X$ be a continuous semimartingale.
    Suppose $f : [0,T] \to (\C^d)^\ast$ is bounded and measurable and let $\nu : [0,T] \to \ccM([0,T], \C^{d \times d})$ be measurable such that $\int_0^T \lVert\nu(r)\rVert_{TV}\,dr < \infty$. For $B \in \cB([0,T])$ write $\nu(r,B) = \bigl(\nu^1(r,B),\dots,\nu^d(r,B)\bigr)$,
    where $\nu^j(r,B) \in \C^d$ denotes the $j$-th column.
    Then, for every $t \in [0,T]$, the following integration by parts formula holds:
    \begin{align*}
        \int_0^t \int_s^T f(r) \nu\big(r,[s,r]\big) \, dr \, & dX(s) - \int_0^t f(r) (\nu(r) \cdot X_{[0,r]})\, dr                                                                              \\
                                                             & = \int_t^T f(r) \Big(\nu\big(r,(t,r]\big)X(t) + \nu(r) \cdot X_{[0,t]} \Big) dr - \int_0^T f(r) \nu\big(r,[0,r]\big) \, dr X(0).
    \end{align*}
\end{lemma}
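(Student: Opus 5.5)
The formula is a stochastic Fubini argument combined with the decomposition $X(r)=X(s)+\int_s^r dX(u)$. I will check it first in the scalar case $d=1$, with $\nu(r)$ a finite signed measure. The general case then follows by linearity, applying the scalar identity column by column: $\nu(r)\cdot X_{[0,r]}=\sum_j \int_{[0,r]} X^j(u)\,\nu^j(r,du)$, and the row vector $f(r)$ acts linearly throughout. The integrability hypotheses, namely $f$ bounded and $\int_0^T\lVert\nu(r)\rVert_{TV}dr<\infty$, guarantee that every double integral below is absolutely convergent pathwise. For the stochastic integral I first localize, so that $X$ is a bounded continuous semimartingale, and then apply the stochastic Fubini theorem. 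The integrand $(s,\omega)\mapsto \int_s^T f(r)\nu(r,[s,r])\,dr$ is deterministic and bounded by $\sup|f|\int_0^T\lVert\nu(r)\rVert_{TV}dr$, and it is measurable in $s$. Hence the left-hand stochastic integral is well defined and may be written as $\int_0^T f(r)\int_0^{t\wedge r}\nu(r,[s,r])\,dX(s)\,dr$.

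The core computation is for fixed $r$. I compute $Y(r):=\int_0^{t\wedge r}\nu(r,[s,r])\,dX(s)$ by a second, pathwise Fubini step, writing $\nu(r,[s,r])=\int_{[0,r]}\Ind_{\{u\ge s\}}\nu(r,du)$. Since the integrand is deterministic, this exchange is just stochastic Fubini against a finite measure in $u$, and it gives
\[
Y(r)=\int_{[0,r]}\bigl(X(u\wedge t)-X(0)\bigr)\,\nu(r,du).
\]
Splitting into cases: if $r\le t$, then $Y(r)=\nu(r)\cdot X_{[0,r]}-\nu(r,[0,r])X(0)$. If $r>t$, then
\[
Y(r)=\nu(r)\cdot X_{[0,t]}+\nu(r,(t,r])X(t)-\nu(r,[0,r])X(0).
\]
Integrating $f(r)Y(r)\,dr$ over $[0,T]$, the $r\le t$ part cancels exactly the term $\int_0^t f(r)(\nu(r)\cdot X_{[0,r]})\,dr$ on the left. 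What remains is the $r>t$ part, $\int_t^T f(r)\bigl(\nu(r,(t,r])X(t)+\nu(r)\cdot X_{[0,t]}\bigr)dr$, minus $\int_0^T f(r)\nu(r,[0,r])\,dr\,X(0)$, which is precisely the right-hand side. Removing the localization is immediate, since both sides are continuous in $t$ and agree on each stochastic interval.

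The main obstacle is the rigorous justification of the two Fubini interchanges, specifically joint measurability of $(r,u)\mapsto\nu(r,du)$ and of $(s,r)\mapsto \nu(r,[s,r])$. For this I would interpret ``$\nu$ measurable'' as $r\mapsto\nu(r,B)$ being measurable for every Borel $B$, so that $\nu$ is a kernel. A monotone class argument then gives measurability of $(s,r)\mapsto\nu(r,[s,r])$, and stochastic Fubini applies with the dominating function $\sup|f|\,\lVert\nu(r)\rVert_{TV}$, which is $dr$-integrable.
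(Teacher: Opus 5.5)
Your proof is correct, but it takes a different route from the paper.

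The paper sets $G(s)=\int_s^T f(r)\nu(r,[s,r])\,dr$ and notes that $G$ is deterministic, bounded and of finite variation. It then applies the product rule $\int_0^t G\,dX=G(t)X(t)-G(0)X(0)-\int_0^t X\,dG$ and evaluates $\int_0^t X\,dG$ by an ordinary pathwise Fubini argument. You instead use stochastic Fubini to move $dX$ inside the $dr$- and $\nu(r,du)$-integrals. This reduces everything to the identity $\int_0^{t\wedge r}\Ind_{\{s\le u\}}\,dX(s)=X(u\wedge t)-X(0)$ for $u\le r$.

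The paper's route avoids stochastic Fubini and localization, but the endpoint bookkeeping at $t$ has to be done by hand. Its displayed formula for $\int_0^t X\,dG$ in fact slips there: the last term should be $X(t)\int_t^T f(r)\nu(r,\{t\})\,dr$, not $X(t)\int_t^T f(r)\nu(r,[t,r])\,dr$. To see this, take $d=1$, $f\equiv 1$, $\nu(r)=\delta_r$; then $G(s)=T-s$ and $\int_0^tX\,dG=-\int_0^tX(s)\,ds$, while the display adds $(T-t)X(t)$. The term $X(t)\int_t^T f(r)\nu(r,(t,r])\,dr$ lost this way reappears only when the penultimate display is ``rewritten'' into the final one.

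In your route the right-hand side comes out directly as $\int_t^T f(r)\int_{[0,r]}X(u\wedge t)\,\nu(r,du)\,dr-\int_0^T f(r)\nu(r,[0,r])\,dr\,X(0)$. This is just the path stopped at $t$ paired against $\nu(r)$, so the bookkeeping is automatic.

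The price is heavier machinery. You need stochastic Fubini for an integrand that is only dominated by the $dr$-integrable function $\sup|f|\,\lVert\nu(r)\rVert_{TV}$ (equivalently, bounded after normalizing by $\lVert\nu(r)\rVert_{TV}$). You also need kernel measurability of $\nu$, which you address.

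One point you leave implicit. The identity $Y(r)=\int_{[0,r]}(X(u\wedge t)-X(0))\,\nu(r,du)$ holds a.s.\ only for each fixed $r$. You should therefore use this explicit, jointly measurable expression as the version of $r\mapsto Y(r)$ in the Fubini theorem before integrating against $f(r)\,dr$. Also, the inner interchange is a stochastic, not a pathwise, Fubini step, as your next sentence says.
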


\begin{proof}
    Define
    \begin{equation*}
        G(s) = \int_s^T f(r) \nu(r,[s,r]) \, dr, \quad s \in [0,T].
    \end{equation*}
    Since $f$ is bounded and $\int_0^T \|\nu(r)\|_{TV}\,dr < \infty$,
    we have for every $s \in [0,T]$
    \begin{equation*}
        \|G(s)\| \leq \sup_{s \in [0,T]} \|f(s)\| \int_s^T \|\nu(r)\|_{TV} \, dr
        \leq \sup_{s \in [0,T]} \|f(s)\| \int_0^T \|\nu(r)\|_{TV} \, dr < \infty.
    \end{equation*}
    Moreover, since $G$ is deterministic, measurable and bounded and $X$ is a continuous semimartingale, the stochastic integral
    $$\int_0^t G(s) \, dX(s), \quad t \in [0,T],$$
    is well-defined.
    Note that $G$ is of finite variation, and an application of the integration-by-parts formula for continuous semimartingales yields
    \begin{equation}\label{eq:ibp-GX}
        \int_0^t G(s) \, dX(s) = G(t)X(t) - G(0)X(0) - \int_0^t X(s) \, dG(s).
    \end{equation}
    Next, note that
    \begin{equation*}
        G(t) = \int_t^T f(r)\nu(r,[t,r]) \, dr, \quad G(0) = \int_0^T f(r)\nu(r,[0,r]) \, dr.
    \end{equation*}
    Therefore, it remains to identify the term $\int_0^t X(s) \, dG(s)$.
    To this end, Fubini's theorem yields
    \begin{align*}
        \int_0^t X(s) \, dG(s) & = -\int_0^t f(r)(\nu(r) \cdot X_{[0,r]}) \, dr - \int_t^T f(r)(\nu(r) \cdot X_{[0,t]}) \, dr \\
                               & \quad +X(t) \int_t^T f(r)\nu(r,[t,r]) \, dr.
    \end{align*}
    Substituting this into \eqref{eq:ibp-GX} leads us to
    \begin{align*}
        \int_0^t G(s) \, dX(s) & = \int_0^t f(r)(\nu(r)\cdot X_{[0,r]}) \, dr + \int_t^T f(r)(\nu(r)\cdot X_{[0,t]}) \, dr \\
                               & \quad -X(t) \int_t^T f(r)\nu(r,[t,r]) \, dr + G(t)X(t) - G(0)X(0).
    \end{align*}
    Since
    $$G(t)X(t) - X(t)\int_t^T f(r)\nu(r,[t,r]) \, dr = 0,$$
    we arrive at
    \begin{align*}
        \int_0^t G(s) \, dX(s) - \int_0^t f(r)(\nu(r)\cdot X_{[0,r]}) \, dr & = \int_t^T f(r)(\nu(r)\cdot X_{[0,t]}) \, dr - \int_0^T f(r) \nu(r,[0,r]) \, dr X(0).
    \end{align*}
    This can now be rewritten as
    \begin{align*}
        \int_0^t\int_s^T f(r) \nu(r,[s,r]) \, dr \, dX(s) -\int_0^t f(r)(\nu(r)\cdot X_{[0,r]})\,dr & = \int_t^T f(r)\Bigl(\nu(r,(t,r])X(t)+\nu(r)\cdot X_{[0,t]}\Bigr)\,dr \\
                                                                                                    & \quad - \int_0^T f(r)\nu(r,[0,r]) \, dr X(0),
    \end{align*}
    which proves the claim.
\end{proof}

\subsection{Conditional Fourier--Laplace functional of path-dependent affine processes}

Recall the notation on $B$ and $A$ from Equations \eqref{notation:B} and \eqref{notation:A}, i.e.
\begin{equation*}
    B(t,x) = \left( ( b^1(t) \cdot x^1 ), \dots, (b^d(t) \cdot x^d)\right),
\end{equation*}
and
\begin{equation*}
    A(t,x,v) = \left(v (a^1(t) \cdot x^1) v^\top, \dots, v  (a^d(t)\cdot x^d) v^\top\right), \end{equation*}
for $ (t,x) \in \ccC_T(S)$ and $v \in (\C^d)^\ast$. Although $B$ and $A$ are induced by the coefficient representation on
$\ccC_T(S)$, expressions such as $B(s,\Ind_{[t,s]})$ and
$A(s,\Ind_{[t,s]},v)$ are understood purely in terms of the representing
measures. Namely,
\begin{equation*}
    B(s,\Ind_{[t,s]}) = \left( b^1(s,[t,s]),\dots,b^d(s,[t,s])\right),
\end{equation*}
and
\begin{equation*}
    A(s,\mathbf 1_{[t,s]},v) = \left( v a^1(s,[t,s])v^\top,\dots,v a^d(s,[t,s])v^\top\right).
\end{equation*}

\begin{theorem}\label{main_thm} 
    Assume that Assumptions \ref{LG-Condition} and \ref{ass:affine-coeff} hold.
    Let $\mu \in \ccM([0,T], \C^d)$
    and let $\psi \in L^2([0,T], (\C^d)^\ast)$
    solve the Riccati-type integral equation
    \begin{equation}\label{riccati-ode}
        \psi(t) = \mu([t,T]) +
        \int_t^T \psi(s) B(s, \Ind_{[t,s]}) + \frac{1}{2} A(s, \Ind_{[t,s]}, \psi(s))\, ds, \quad t\in[0,T]
    \end{equation}
    and define
    \begin{equation}
        \phi(t) = \int_t^T \psi(s) b^0(s) + \frac{1}{2}\psi(s)a^0(s)\psi(s)^\top \, ds, \quad t\in[0,T].
    \end{equation}
    Then the $\C$-valued process $Y$,
    %$= (Y(t))_{t\in[0,T]}$ 
    defined by
    \begin{align}\label{Process-Y}
        \begin{split}
            Y(t) & = \phi(t) + \psi(0)X(0) +\int_0^t \psi(s) dX(s)                                                                                                                       \\
                 & \quad -\int_0^t\sum_{i=1}^{d}\Bigl( \psi(s) ( b^i(s) \cdot X^i_{[0,s]}) + \frac{1}{2} \psi(s) ( a^i(s) \cdot X^i_{[0,s]}) \psi(s)^\top \Bigr)ds, \quad 0 \leq t \leq T,
        \end{split}
    \end{align}
    satisfies
    \begin{align}\label{Y-tilde}
        \begin{split}
            Y(t) & = \phi(t) + \psi(t)X(t)+ \mu \cdot X_{[0,t)}                                                                                                                            \\
                 & \quad + \int_t^T \sum_{i=1}^{d} \Bigl( \psi(s) ( b^i(s) \cdot X^i_{[0,t)}) + \frac{1}{2}\psi(s) (a^i(s) \cdot X^i_{[0,t)}) \psi(s)^\top \Bigr) ds, \quad 0 \leq t \leq T.
        \end{split}
    \end{align}
    Moreover, the process $\exp(Y)$ is a local martingale and if it is a true martingale, then %one has the exponential-affine transform formula
    \begin{equation}\label{exp-affine-formula}
        E\left[e^{\mu \cdot X_{[0,T]}} |\, \ccF_t\right] = \exp\left(Y(t)\right), \quad 0 \leq t \leq T.
    \end{equation}
\end{theorem}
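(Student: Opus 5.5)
The argument has two parts: first we prove the identity between \eqref{Process-Y} and \eqref{Y-tilde} pathwise, then we apply It\^o's formula to $\exp(Y)$.

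\emph{Step 1: the identity.} Substitute the Riccati equation \eqref{riccati-ode} into the stochastic integral $\int_0^t\psi(s)\,dX(s)$. This splits it into three pieces:
\begin{equation*}
\int_0^t \mu([s,T])\,dX(s), \qquad \sum_{i=1}^d\int_0^t\int_s^T \psi(r)\, b^i(r,[s,r])\,dr\,dX^i(s),
\end{equation*}
and the analogous $a^i$-term with the row vector $\psi(r)a^i(r,[s,r])\psi(r)^\top$ in place of $\psi(r)b^i(r,[s,r])$.

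For the $b^i$- and $a^i$-pieces we apply Lemma~\ref{product-rule-lemma} componentwise. For the $b^i$-piece take $f=\psi$ and $\nu$ the matrix whose $i$-th column is $b^i$. For the $a^i$-piece take $f$ to encode $\psi(r)a^i(\cdot)\psi(r)^\top$; a scalar measure in the $i$-th column suffices here. Lemma~\ref{product-rule-lemma} requires $f$ bounded, so we must first check that $\psi$ is bounded. This follows from \eqref{riccati-ode} together with $\psi\in L^2$ and the integrability in Assumption~\ref{ass:affine-coeff}.

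Each application of the lemma converts the stochastic integral minus the running drift $\int_0^t\psi(r)(b^i(r)\cdot X^i_{[0,r]})\,dr$ into the forward quantity $\int_t^T \psi(r)\bigl(b^i(r,(t,r])X^i(t)+b^i(r)\cdot X^i_{[0,t]}\bigr)dr$, minus an $X(0)$ boundary term. Since
\begin{equation*}
b^i(r,(t,r])X^i(t)+b^i(r)\cdot X^i_{[0,t]} = b^i(r,[t,r])X^i(t) + b^i(r)\cdot X^i_{[0,t)},
\end{equation*}
the first summand recombines with $\mu([t,T])X(t)$ into $\psi(t)X(t)$, again via \eqref{riccati-ode}. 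The $\mu$-piece is handled by ordinary integration by parts with the deterministic finite-variation function $s\mapsto\mu([s,T])$: it gives $\mu([t,T])X(t)-\mu([0,T])X(0)+\mu\cdot X_{[0,t)}$. Finally, the $X(0)$ boundary terms sum to $-\psi(0)X(0)$ by \eqref{riccati-ode} at $t=0$, so they cancel the $\psi(0)X(0)$ in \eqref{Process-Y}. This yields \eqref{Y-tilde}.

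The main obstacle is the bookkeeping of atoms at $s=t$: keeping closed versus half-open intervals straight (the $[0,t)$ versus $[0,t]$ and $(t,r]$ versus $[t,r]$ splits) and checking that Fubini applies to the complex matrix-valued measures.

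\emph{Step 2: local martingale property and the transform formula.} Since $\phi'(t)=-\psi b^0-\tfrac12\psi a^0\psi^\top$, the definition \eqref{Process-Y} gives
\begin{equation*}
dY=\psi\,dX-\bigl(\psi b(t,X_{[0,t]})+\tfrac12\psi a(t,X_{[0,t]})\psi^\top\bigr)dt,
\end{equation*}
using the affine form of $b$ and $a$ from Assumption~\ref{ass:affine-coeff}. It\^o's formula then gives
\begin{equation*}
d e^{Y}=e^{Y}\bigl(dY+\tfrac12\psi a\psi^\top dt\bigr)=e^{Y}\psi\sigma\,dW,
\end{equation*}
so $\exp(Y)$ is a local martingale; the stochastic integrals are well defined because $\psi$ is bounded and $X$ has linear growth by Assumption~\ref{LG-Condition}.

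At $t=T$ we have $\phi(T)=0$, $\psi(T)=\mu(\{T\})$, and the integral over $(T,T]$ in \eqref{Y-tilde} is empty. Hence $Y(T)=\mu(\{T\})X(T)+\mu\cdot X_{[0,T)}=\mu\cdot X_{[0,T]}$. If $\exp(Y)$ is a true martingale, then $E[e^{\mu\cdot X_{[0,T]}}\mid\ccF_t]=E[e^{Y(T)}\mid\ccF_t]=e^{Y(t)}$, which is \eqref{exp-affine-formula}.
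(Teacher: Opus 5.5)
Your proposal is correct and follows essentially the same route as the paper. You substitute the Riccati equation into $\int_0^t\psi\,dX$, apply Lemma~\ref{product-rule-lemma} to the $B$- and $A$-terms, integrate by parts for the $\mu$-term and recombine via the Riccati equation at $t$ and at $0$, and then use $\phi'$ to get $dY=\psi\sigma\,dW-\tfrac12\psi a\psi^\top dt$ (the paper phrases this as $Y=Y(0)+M-\tfrac12\langle M\rangle$) together with $Y(T)=\mu\cdot X_{[0,T]}$; your boundedness check on $\psi$ plays the role of the paper's remark that $\psi$ admits a bounded c\`agl\`ad version.
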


\begin{remark}
    \begin{enumerate}
       \item[(i)] In the setting of Theorem~\ref{main_thm} any $L^2$-solution $\psi$ of the Riccati equation~\eqref{riccati-ode}
        admits a c\`agl\`ad version, and we always work with this version. Indeed, for every finite
        signed measure $\mu$, the map $t \mapsto \mu([t,T])$ is c\`agl\`ad and similarly, for fixed $s\in[0,T]$, the maps $t \mapsto b^i(s,[t,s])$ and $t\mapsto a^i(s,[t,s])$. In particular, since every c\`agl\`ad function on the
        compact interval $[0,T]$ is bounded, this version of $\psi$ belongs to
        $L^\infty([0,T],(\C^d)^\ast)$.
        \item[(ii)] Unlike in the Markovian case, the function $\phi$ is not necessarily continuously differentiable.
              Since the coefficients $b$ and $a$ are continuous affine functionals, this implies that their constant parts $b^0$ and $a^0$ are continuous. Moreover, as $\psi$ is c\`agl\`ad and belongs to $L^2([0,T])$, the function $t \mapsto \phi(t)$
              is absolutely continuous on $[0,T]$, and thus of bounded variation.
        \item[(iii)] Note that the right-hand side of \eqref{Process-Y} cannot be formulated for a general path $x \in C([0,T], \R^d)$ due to the integral with respect to $x$.
              In contrast, the alternative representation~\eqref{Y-tilde} is well-defined for every $x \in C([0,T], \R^d)$. Moreover, the identity \eqref{Y-tilde}
              clearly shows which term appears due to the path-dependency of the coefficients, since in the classical Markovian case we only have $Y(t) = \phi(t) + \psi(t)X(t)$.

    \end{enumerate}
\end{remark}

The following corollary shows that $Y(t)$ depends on the path $X_{[0,t]}$ in an affine way.

\begin{corollary}
    In the setting of Theorem~\ref{main_thm}, define the $\ccM([0,T],\C^d)$-valued measure $\Psi$ by
    \begin{align}\label{eq:Psi-measure}
        \Psi^j(t,T,\mu,A) & = \psi^j(t)\delta_t(A) + \mu^j(A \cap [0,t)) \notag            \\
                          & \quad + \int_t^T \Bigl( \psi(u) b^j\bigl(u, A \cap [0,t)\bigr)
        + \frac{1}{2} \psi(u)\, a^j\bigl(u, A \cap [0,t)\bigr)\, \psi(u)^\top\Bigr)\, du, \quad j = 1,\dotsc, d,
    \end{align}
    for all Borel sets $A \subseteq [0,T]$.
    Then
    \begin{equation*}
        Y(t)=\phi(t)+\Psi(t,T,\mu)\cdot X_{[0,t]}
    \end{equation*}
    and if $e^Y$ is a true martingale, then the conditional Fourier--Laplace transform admits the exponential-affine representation.
    Moreover, the pair $(\Psi,X)$ satisfies the semimartingale representation of
    Assumption~\ref{ass:sm-diff-Psi} with
    \begin{align*}
        \psi(t,T,\mu)             & = \psi(t)                                                                \\
        \alpha(t,X_{[0,t]},T,\mu) & = \psi(t,T,\mu)B(t,X_{[0,t]}) + \frac{1}{2}A(t,X_{[0,t]},\psi(t,T,\mu)).
    \end{align*}
\end{corollary}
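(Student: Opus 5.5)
The plan has three parts: verify the pairing identity, deduce the transform formula from Theorem~\ref{main_thm}, and read off the semimartingale decomposition from \eqref{Process-Y}.

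\textbf{The pairing identity.} I compute $\Psi(t,T,\mu)\cdot X_{[0,t]}$ directly from \eqref{eq:Psi-measure}, component by component.

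\begin{itemize}
\item The atom $\psi^j(t)\delta_t$ contributes $\psi^j(t)X^j(t)$. Summed over $j$ this gives $\psi(t)X(t)$.
\item The term $\mu^j(\cdot\cap[0,t))$ contributes $\int_{[0,t)}X^j\,d\mu^j$. Summed over $j$ this gives $\mu\cdot X_{[0,t)}$.
\item The third term defines, for each fixed $u$, a finite signed measure $A\mapsto \psi(u)b^j(u,A\cap[0,t))+\tfrac12\psi(u)a^j(u,A\cap[0,t))\psi(u)^\top$, integrated over $u\in[t,T]$.
\end{itemize}

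For the third term I use Fubini to interchange $\int X^j(s)\,(\cdot)(ds)$ with $du$. This is justified because $\psi$ is bounded (it is c\`agl\`ad by the preceding remark), $X$ is continuous on $[0,t]$, and $\int_0^T\|b^j(u)\|_{TV}+\|a^j(u)\|_{TV}\,du<\infty$ by Assumption~\ref{ass:affine-coeff}. After the interchange the term equals $\int_t^T \psi(u)(b^j(u)\cdot X^j_{[0,t)})+\tfrac12\psi(u)(a^j(u)\cdot X^j_{[0,t)})\psi(u)^\top\,du$.

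Summing the three contributions reproduces \eqref{Y-tilde} minus $\phi(t)$. Theorem~\ref{main_thm} then gives $Y(t)=\phi(t)+\Psi(t,T,\mu)\cdot X_{[0,t]}$. If $e^Y$ is a true martingale, \eqref{exp-affine-formula} yields the exponential-affine representation immediately. I also note that $\Psi(T,T,\mu)=\mu$: at $t=T$ the integral term vanishes and the atom is $\psi(T)=\mu(\{T\})$.

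\textbf{The semimartingale decomposition.} Here I use the other representation \eqref{Process-Y}. I subtract $\phi(t)$ from both sides and use $\Psi(0,T,\mu)\cdot X_{[0,0]}=\Psi(0,T,\mu)(\{0\})X(0)$. From \eqref{eq:Psi-measure} at $t=0$, with $A\cap[0,0)=\emptyset$, this equals $\psi(0)X(0)$. We obtain
\begin{equation*}
\Psi(t,T,\mu)\cdot X_{[0,t]}=\Psi(0,T,\mu)\cdot X_{[0,0]}+\int_0^t\psi(s)\,dX(s)-\int_0^t \Bigl(\psi(s)B(s,X_{[0,s]})+\tfrac12 A(s,X_{[0,s]},\psi(s))\Bigr)ds.
\end{equation*}
This is exactly \eqref{eq:representation alpha beta_inhom} with the stated $\psi$ and $\alpha$.

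It remains to check the requirements of Assumption~\ref{ass:sm-diff-Psi}:
\begin{itemize}
\item $\alpha$ is affine non-anticipative, since it is linear in $x_{[0,s]}$ through the measures $b^i(s),a^i(s)$, with zero constant part.
\item The process is a continuous semimartingale.
\item $\psi$ is c\`agl\`ad.
\end{itemize}

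\textbf{Main obstacle: finite variation of $\psi$.} From \eqref{riccati-ode}, $\psi(t)=\mu([t,T])+\int_t^T \psi(s)B(s,\Ind_{[t,s]})+\dots\,ds$. The map $t\mapsto\mu([t,T])$ has finite variation. For the integral term I write $b^i(s,[t,s])=\int \Ind_{\{t\le r\le s\}}\,b^i(s,dr)$, so it equals $\int_{[t,T]}\nu(dr)$ with
\[
\nu(dr)=\int_r^T \psi(s)\,b^i(s,dr)\,ds+\dots .
\]
This $\nu$ is a finite measure of total variation at most $\sup|\psi|\int\|b^i\|_{TV}+\sup|\psi|^2\int\|a^i\|_{TV}$. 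Hence the integral term is a tail function of a finite measure, and in particular has finite variation.
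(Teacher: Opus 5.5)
Your proposal is correct and follows the same route as the paper. You split $\Psi(t,T,\mu)$ into the atom at $t$, the $\mu$-part on $[0,t)$ and the integrated coefficient part (via Fubini), recover \eqref{Y-tilde} and hence \eqref{Process-Y} by Theorem~\ref{main_thm}, and read off $\psi$ and $\alpha$ using $\Psi(0,T,\mu)(\{0\})=\psi(0)$. In addition, your check that $t\mapsto\psi(t)$ has finite variation, by writing the integral term of \eqref{riccati-ode} as the tail $\nu([t,T])$ of a finite measure, supplies a requirement of Assumption~\ref{ass:sm-diff-Psi} that the paper's proof leaves implicit.
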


\begin{proof}
    By \eqref{eq:Psi-measure}, the measure $\Psi(t,T,\mu)$ decomposes into an atom at $\{t\}$
    and a finite signed measure supported on $[0,t)$. Hence,
    \begin{align*}
        \int_{[0,t]} X(u) \, \Psi(t,T,\mu,du) & = X(t)\Psi(t,T,\mu,\{t\}) + \int_{[0,t)} X(u) \, \Psi(t,T,\mu,du) \\
         &= \psi(t)X(t) + \int_{[0,t)} X(u) \, \mu(du)          \\
         & \quad + \int_{[0,t)} X(u)
        \left( \int_t^T \sum_{i=1}^d \Bigl[\psi(s)\, b^i(s,du) + \frac{1}{2} \psi(s) a^i(s,du) \psi(s)^\top \Bigr] ds\right).
    \end{align*}
    Since $X$ is continuous and the measures $b^i(s,\cdot)$ and $a^i(s,\cdot)$ have finite variation,
    Fubini's theorem yields
    \begin{align*}
        \int_{[0,t]} X(u)\,\Psi(t,T,\mu,du) & = \psi(t)X(t) + \mu \cdot X_{[0,t)} \\
         & \quad + \int_t^T \sum_{i=1}^d \left( \psi(s)(b^i(s)\cdot X^i_{[0,t)}) + \frac{1}{2}\psi(s)(a^i(s)\cdot X^i_{[0,t)})\psi(s)^\top \right) ds.
    \end{align*}
    Therefore, $\phi(t) + \Psi(t, T, \mu) \cdot X_{[0,t]}$ equals \eqref{Y-tilde}, which in turn equals to \eqref{Process-Y} with Theorem~\ref{main_thm}.
    The second statement now directly follows from \eqref{Process-Y}, since $\Psi(0,T,\mu,\{0\}) = \psi(0)$ and therefore we can identify for every $t \in [0,T]$
    \begin{align*}
        \psi(t,T,\mu)             & = \psi(t),                                                                          \\
        \alpha(t,X_{[0,t]},T,\mu) & = \sum_{i=1}^d \left( \psi(t,T,\mu)(b^i(t)\cdot X_{[0,t]}^i) + \frac{1}{2} \psi(t,T,\mu)(a^i(t)\cdot X_{[0,t]}^i)\psi(t,T,\mu)^\top \right)
    \end{align*}
\end{proof}

\begin{proof}[Proof of Theorem~\ref{main_thm}]
    We begin by proving that equation \eqref{Process-Y} equals \eqref{Y-tilde}.
    Let $Y$  be given by equation \eqref{Process-Y}. With the Riccati equation $\eqref{riccati-ode}$,
    \begin{align*}
        Y(t) & = \phi(t) + \psi(0)X(0) - \int_0^t\left( \psi(r)\sum_{i=1}^{d} (b^i(r) \cdot X^i_{[0,r]}) + \frac{1}{2} \sum_{i = 1}^{d}\psi(r) (a^i(r) \cdot X^i_{[0,r]}) \psi(r)^\top \right)dr \\
             & \quad +\int_0^t \left(\mu([s,T]) + \int_s^T \psi(r) B(r, \Ind_{[s,r]}) + \frac{1}{2} A(r, \Ind_{[s,r]}, \psi(r)) \, dr\right) dX(s)                                               \\
             & = \phi(t) + \psi(0)X(0) + \int_0^t \mu([s,T]) dX(s) + \frac{1}{2} \int_0^t \int_s^T A(r, \Ind_{[s,r]}, \psi(r)) \, dr \, dX(s)                                                    \\
             & \quad - \frac{1}{2} \int_0^t \sum_{i = 1}^{d}\psi(r) (a^i(r) \cdot X^i_{[0,r]}) \psi(r)^\top dr + \int_0^t \int_s^T \psi(r) B(r, \Ind_{[s,r]}) \, dr \, dX(s)                     \\
             & \quad - \int_0^t \psi(r) \sum_{i=1}^{d} (b^i(r) \cdot X^i_{[0,r]}) dr.
        \intertext{Applying Lemma~\ref{product-rule-lemma} twice, once on the pair $\psi$ and $B$ and once on $A$, we get}
        Y(t) & = \phi(t) + \psi(0)X(0) + \int_0^t \mu([s,T]) \, dX(s) - \frac{1}{2} \int_0^T A(r, \Ind_{[0,r]}, \psi(r)) \, dr X(0)                                                              \\
             & \quad + \int_{t}^{T} \psi(r) \left(B(r, \Ind_{(t,r]})X(t) + \sum_{i=1}^{d} (b^i(r) \cdot X^i_{[0,t]})\right)dr - \int_0^T \psi(r) B(r, \Ind_{[0,r]}) \, dr X(0)                   \\
             & \quad + \frac{1}{2} \int_{t}^{T} A(r, \Ind_{(t,r]}, \psi(r))X(t) + \sum_{i = 1}^{d}\psi(r) (a^i(r) \cdot X^i_{[0,t]}) \psi(r)^\top \,dr.
    \end{align*}
    A further application of the usual integration-by-parts formula gives us
    \begin{equation*}
        \int_0^t \mu([s,T]) dX(s) = \mu((t,T])X(t) - \mu([0,T])X(0) + \mu \cdot X_{[0,t]}.
    \end{equation*}
    Therefore, we obtain
    \begin{align*}
        Y(t) & = \phi(t) + \psi(0)X(0) + \mu \cdot X_{[0,t)}                                                                                                          \\
             & \quad + \left(\mu([t,T]) + \int_{t}^{T} \psi(r)B(r, \Ind_{[t,r]}) + \frac{1}{2} A(r, \Ind_{[t,r]}, \psi(r))\, dr\right)X(t)                            \\
             & \quad - \left(\mu([0,T]) + \int_{0}^{T} \psi(r) B(r, \Ind_{[0,r]}) + \frac{1}{2} A(r, \Ind_{[0,r]}, \psi(r))\, dr\right)X(0)                           \\
             & \quad + \int_{t}^{T} \psi(r)\sum_{i=1}^{d} (b^i(r) \cdot X^i_{[0,t)}) +\frac{1}{2} \sum_{i = 1}^{d}\psi(r) (a^i(r) \cdot X^i_{[0,t)}) \psi(r)^\top dr,
        \intertext{which in turn equals with the Riccati equation \eqref{riccati-ode}}
             & = \phi(t) + \psi(0)X(0) + \mu \cdot X_{[0,t)} + \psi(t)X(t) - \psi(0)X(0)                                                                              \\
             & \quad + \int_{t}^{T} \sum_{i=1}^{d} \left(\psi(r) (b^i(r) \cdot X^i_{[0,t)}) +\frac{1}{2} \psi(r) (a^i(r) \cdot X^i_{[0,t)}) \psi(r)^\top \right) dr.
    \end{align*}
    Therefore, the process from equation \eqref{Process-Y} is equal to the one given by \eqref{Y-tilde}.

To prove that $\exp(Y)$ is a local martingale, we rewrite the process $Y$ given by \eqref{Process-Y} in a stochastic
exponential form. 
Thus, let $Y$ be given by \eqref{Process-Y}. Then,
    \begin{align}
        \begin{split}
            Y(t) & = \phi(t) + \psi(0)X(0) +\int_0^t \psi(s) dX(s)                                                                                                     \\
                 & \quad -\int_0^t \sum_{i=1}^{d} \left( \psi(s) ( b^i(s) \cdot X^i_{[0,s]}) + \frac{1}{2} \psi(s) ( a^i(s) \cdot X^i_{[0,s]}) \psi(s)^\top \right)ds.
        \end{split}
    \end{align}
    Hence,
    \begin{align*}
        \begin{split}
            Y(t) & = \phi(t) + \psi(0)X(0) + \int_0^t \psi(s) b(s,X) \, ds + \int_0^t \psi(s)\sigma(s,X) \, dW(s)                                                     \\
                 & \quad -\int_0^t \sum_{i=1}^{d} \left( \psi(s)( b^i(s) \cdot X^i_{[0,s]}) + \frac{1}{2} \psi(s) ( a^i(s) \cdot X^i_{[0,s]}) \psi(s)^\top \right)ds  \\
                 & = \phi(0) - \int_0^t \left( \psi(s) b^0(s) + \frac{1}{2}\psi(s)a^0(s)\psi(s)^\top \right) ds + \psi(0)X(0)                                         \\
                 & \quad + \int_0^t \psi(s) b(s,X_{[0,s]}) \, ds + \int_0^t \psi(s)\sigma(s,X_{[0,s]}) \, dW(s)                                                       \\
                 & \quad -\int_0^t \sum_{i=1}^{d} \left( \psi(s) ( b^i(s) \cdot X^i_{[0,s]}) + \frac{1}{2} \psi(s) ( a^i(s) \cdot X^i_{[0,s]}) \psi(s)^\top \right)ds \\
                 & = \phi(0)+ \psi(0)X(0) + \int_0^t \psi(s)\sigma(s,X_{[0,s]}) \, dW(s) - \frac{1}{2}\int_0^t \psi(s)a(s,X_{[0,s]})\psi(s)^\top \, ds.
        \end{split}
    \end{align*}
Set
\[
    M(t):=\int_0^t \psi(s)\sigma(s,X_{[0,s]})\,dW(s).
\]
Then $M$ is a continuous complex-valued local martingale with quadratic
variation
\[
    \langle M\rangle(t)
    =
    \int_0^t
    \psi(s)a(s,X_{[0,s]})\psi(s)^\top\,ds .
\]
Consequently,
\[
    Y(t)
    =
    Y(0)+M(t)-\frac{1}{2} \langle M \rangle(t),
    \quad
    Y(0)=\phi(0)+\psi(0)X(0).
\]
 Note now that $Y + \frac{1}{2}\langle Y \rangle$ is a local martingale
    and hence $\exp(Y)$ is a local martingale. For the true martingale situation, the remaining claim follows upon observing that by \eqref{Y-tilde},
    $Y(T) = \phi(T) + \mu(\{T\})X(T) + \mu \cdot X_{[0,T)} = \mu \cdot X_{[0,T]}$. Therefore,
    $E\left[\exp\left( \mu \cdot X_{[0,T]} \right) |\, \ccF_t \right] = \exp(Y(t))$ holds for all $t \in [0,T]$.
\end{proof}

The following remarks illustrate how Theorem~\ref{main_thm} relates to the corresponding results in the literature on affine diffusions and affine Volterra processes.

\begin{remark}
    We explain here how Theorem~\ref{main_thm} covers the well-known result on affine diffusions as special case. Let $u \in (\C^d)^\ast$ and assume that $\mu(dr) = u \delta_T(dr)$. Moreover, assume that the measures in Assumption~\ref{ass:affine-coeff} are of the form
    \begin{equation*}
        b^i(t,dr) = b^i(t)\delta_t(dr),\quad a^i(t,dr) = a^i(t)\delta_t(dr), \quad i = 1,\dotsc, d,
    \end{equation*}
    where $b^i : [0,T] \to \R^d$, $a^i : [0,T] \to \bS_d$ and $b^0 : [0,T] \to \R^d$, $a^0 : [0,T] \to \bS_d$.
    As the coefficients depend only on the current state $x(t)$ at each time $t \in [0,T]$ the path-dependent SDE 
    \eqref{path_dependent_sde_reverse_dir} reduces to the classical affine diffusion
    \[
        X(t) = X(0) + \int_0^t \left( b^0(s)+\sum_{i=1}^d X^i(s)b^i(s) \right)ds + \int_0^t \sigma(s,X(s))\,dW(s),
    \]
    with
    \[
        \sigma(s,x)\sigma(s,x)^\top = a^0(s)+\sum_{i=1}^d x^i a^i(s).
    \]
    Denote $B(s,\Ind_{[t,s]}) = B(s) := \big(b^1(s),\dots,b^d(s)\big)$.
    In this case, the Riccati equation \eqref{riccati-ode} reduces to the standard system of Riccati ODEs
    $(\psi^i)_{i \in \{1,\dotsc,d\}}$ in the Markovian setting
    \begin{equation*}
        \psi^i(t) = u^i + \int_{t}^{T} \biggl(\sum_{j = 1}^{d} \psi^j(s)B^{ji}(s) + \frac{1}{2} \psi(s)a^i(s)\psi(s)^\top\biggr) ds, \quad t \in [0,T].
    \end{equation*}
    This is now the usual system of backward Riccati equations for
    time-inhomogeneous affine diffusions.

    Finally, the exponential-affine representation from Theorem~\ref{main_thm}
    also reduces to the classical one
    \[
        Y(t)=\phi(t)+\psi(t)X(t), \quad t\in[0,T],
    \]
    and the conditional Fourier--Laplace transform takes the standard form
    \[
        \E\Bigl[ \exp\big(uX(T)\big) \mid \ccF_t \Bigr] = \exp\bigl(\phi(t)+\psi(t)X(t)\bigr),
    \]
    whenever the corresponding exponential local martingale is a true
    martingale.
\end{remark}

\begin{remark}
    We now explain how Theorem~\ref{main_thm} also contains as a special case the constant volatility inhomogeneous affine Volterra setting considered in \cite{ackermann_Inhomogeneous_2022}.
    To this end, assume that $S = \R^d$, $b^0(t) = 0$, $a^0(t) = \Id_d$, and $a^i \equiv 0$ for all $i = 1,\dots,d$. Moreover, assume that the measures $b^i(t,\cdots)$ are absolutely continuous with respect to the Lebesgue measure, i.e.
    \begin{equation*}
        b^i(t,dr) = \beta^i(t,r)dr,
    \end{equation*}
    where $\beta^i(t,r) \in \R^d$ for all $0 \leq r \leq t \leq T$. Consider also for $u \in (\C^d)^\ast$, $\mu(dr) = u \delta_T(dr)$.

    Then the drift in Assumption~\ref{ass:affine-coeff} reduces to
    \begin{equation*}
        b(t,x) = \sum_{i = 1}^{d} \int_{0}^{t} x^i(r) \beta^i(t,r) \, dr, \quad (t,x) \in \ccC_T^d,
    \end{equation*}
    and the path-dependent SDE \eqref{path_dependent_sde_reverse_dir} becomes
    \begin{equation}\label{eq:volterra-sde-remark}
        X(t) = X(0) + \int_{0}^{t} b(s,X_{[0,s]}) ds + W(t), \quad t \in [0,T].
    \end{equation}
    With these assumptions, the Riccati equation \eqref{riccati-ode} reads
    \begin{equation}\label{riccati-volterra-remark}
        \psi(t) = u + \int_t^T \psi(s) B(s, \Ind_{[t,s]}) \, ds, \quad t \in [0,T],
    \end{equation}
    and the process $Y$ \eqref{Process-Y} takes the form
    \begin{align}\label{process-Y-volterra-remark}
            Y(t) & = \phi(0)+ \psi(0)X(0) + \int_0^t \psi(s)\, dW(s) - \frac{1}{2}\int_0^t \psi(s)\psi(s)^\top \, ds.
    \end{align}
    We can now rewrite \eqref{eq:volterra-sde-remark} as a Volterra equation. Define the matrix-valued kernel $K$ by
$$K(t,r) = \left(\int_{r}^{t} \beta^1(s,r) \,ds \ \cdots \ \int_{r}^{t} \beta^d(s,r) \,ds\right) \in \R^{d \times d}, \quad 0 \leq r \leq t \leq T.$$
With this, the path-dependence of \eqref{eq:volterra-sde-remark} is encoded by the Volterra kernel $K$ and an application of Fubini's theorem yields
\begin{equation*}
    X(t) = X(0) + \int_{0}^{t} K(t,r)X(r) \, dr + W(t).
\end{equation*}
To compare this with \cite{ackermann_Inhomogeneous_2022}, introduce the process $Z(t) := (X(t), W(t))^\top$ with the matrices
    \[
        \overline{K}(t,s) :=
        \begin{pmatrix}
            K(t,s) & \Id_d \\
            0      & \Id_d
        \end{pmatrix},
        \quad B :=
        \begin{pmatrix}
            \Id_d & 0 \\
            0     & 0
        \end{pmatrix},
        \quad \sigma :=
        \begin{pmatrix}
            0   \\
            \Id_d
        \end{pmatrix}.
    \]
The pair $Z(t) := (X(t), W(t))^\top$ can now be written in affine Volterra form as
    \begin{align*}
        Z(t) & =
        \begin{pmatrix}
            X(0) \\
            0
        \end{pmatrix}
        + \int_{0}^{t} \bar{K}(t,s)BZ(s) \,ds + \int_{0}^{t} \bar{K}(t,s) \sigma \, dW(s).
    \end{align*}
    Applying \cite[Theorem 2.1]{ackermann_Inhomogeneous_2022}, with
    $u = (u, 0)$ and $f = 0$, the associated Riccati-Volterra equation is
    \begin{equation*}
        \psi^V(t) = (u, 0) \bar{K}(T,t) + \int_{t}^{T} \psi^V(s) B \bar{K}(s,t) \,ds.
    \end{equation*}
    Writing $\psi^V(t) = (\psi_1^V(t), \psi_2^V(t))$, this is equivalent to
    \begin{equation*}
        \psi_1^V(t) = u K(T, t) + \int_t^T \psi_1^V(s) K(s, t) \, ds, \quad \psi_2^V(t) = u + \int_t^T \psi_1^V(s) \, ds.
    \end{equation*}
 We now identify the second component $\psi_2^V$ with the solution
    $\psi$ of \eqref{riccati-volterra-remark}. To see this, write
    \[
        \beta(t,r) := \big(\beta^1(t,r)\ \cdots\ \beta^d(t,r)\big),
    \]
    so that
    \[
        K(t,r) = \int_r^t \beta(s,r) \, ds.
    \]
    Using Fubini's theorem and the identity
    $\psi_2^V(r)= u + \int_r^T \psi_1^V(s) \, ds$, we obtain
    \[
        \psi_1^V(t) = \int_t^T \psi_2^V(r) \beta(r,t) \, dr,
    \]
    and hence
    \[
        \psi_2^V(t) = u + \int_t^T\int_s^T \psi_2^V(r)\beta(r,s) \,dr \,ds
        = u + \int_t^T \psi_2^V(r) \left(\int_t^r \beta(r,s) \, ds \right)dr.
    \]
    Since $ \int_t^r \beta(r,s)\,ds = B(r,\Ind_{[t,r]})$,
    we conclude that
    \[
        \psi_2^V(t) = u + \int_t^T \psi_2^V(r) B(r,\Ind_{[t,r]}) \, dr.
    \]
 Thus, $\psi_2^V$ solves exactly the Riccati equation
    \eqref{riccati-volterra-remark}. In other words, the Riccati equation
    obtained from the affine Volterra formulation coincides with the Riccati
    equation obtained from Theorem~\ref{main_thm}, after identifying $\psi=\psi_2^V$.
    Finally, under this identification, the martingale exponent from the
    affine Volterra formulation becomes
    \[
        \phi(0)+\psi(0)X(0) + \int_0^t \psi(s) \, dW(s) - \frac{1}{2}\int_0^t \psi(s)\psi(s)^\top \, ds,
    \]
    which agrees with \eqref{process-Y-volterra-remark}.
\end{remark}

\section{Existence of Weak Solutions and Positivity of the Affine Diffusion}\label{section:existence}
We now turn to the existence of path-dependent equations.
Throughout this section the state space of the process is $\R^d$.
For $X(0) \in \R^d$, we consider the path-dependent SDE
\begin{equation}\label{path_dependent_sde_existence}
    X(t) = X(0) + \int_{0}^{t} b(s,X_{[0,s]}) \, ds + \int_{0}^{t} \sigma(s,X_{[0,s]}) \, dW(s).
    \quad 0 \leq t \leq T.
\end{equation}
Here $\sigma : \ccC_T^d \to \R^{d\times m}$ is measurable,
$b : \ccC_T^d \to \R^d$ and $a(t,x) := \sigma(t,x)\sigma(t,x)^\top$ are continuous
with respect to $d_{\ccC}$, as in Section~\ref{sec:preliminaries}.

We are particularly interested in the case where the $b$ and $a$ are affine in the past trajectory, that is, where Assumption
\ref{ass:affine-coeff} holds.
If $a(t,X_{[0,t]}) \in \bS_d^+$ along a solution, then the positive
semidefinite square root $\sqrt{a(t,X_{[0,t]})}$ is well-defined along the
trajectory. In this square-root formulation, the equation can be
written as
\begin{equation}\label{eq:existence-with-affine-coeff}
    X(t) = X(0) + \int_{0}^{t} \left( b^0(u) + \sum_{i=1}^d b^i(u) \cdot X^i_{[0,u]} \right) du
    + \int_{0}^{t} \sqrt{a^0(u) + \sum_{i=1}^d a^i(u) \cdot X^i_{[0,u]}} \, dW(u), \quad 0 \leq t \leq T,
\end{equation}
where in the square-root formulation $W$ is taken $d$-dimensional.

This formulation raises two central questions. First, since the affine
functional $a(t,\cdot)$ need not be positive semidefinite on the whole path
space $C([0,t],\R^d)$, one has to ensure that
$a(t,X_{[0,t]}) \in \bS_d^+$ almost surely along the trajectories of the solution for all $t \in [0,T]$. 
Second, due to the presence of the square root, the diffusion coefficient is generally not locally
Lipschitz continuous. As a result, standard existence results do not apply directly. 
To establish weak existence in this setting, we adapt
arguments from \cite{abijaber_Affine_2019}, originally developed for the
Volterra case. For completeness, we also present a strong existence result
under Lipschitz conditions on $b$ and $\sigma$.

Before showing existence, we provide a structural condition ensuring the non-negativity of the affine diffusion term.
\begin{remark}\label{rem:nontrivial-affine-diffusion-support}
    A nontrivial affine diffusion coefficient cannot simultaneously be positive semidefinite on the whole path space and depend nontrivially on the past trajectory.
    Indeed, fix $t \in [0,T]$ and let
    \begin{equation*}
        a(t,x) = a^0(t) + \sum_{i=1}^d a^i(t) \cdot x^i_{[0,t]},
        \quad x \in C([0,t],\R^d),
    \end{equation*}
    be affine in $x$. Suppose that
    \begin{equation*}
        a(t,x) \in \bS_d^+
        \quad \text{for all } x \in C([0,t],\R^d).
    \end{equation*}
    Then, since the path space $C([0,t],\R^d)$ is linear, also $-x \in C([0,t],\R^d)$ for every $x$, and hence
    \begin{equation*}
        a^0(t) + \sum_{i=1}^d a^i(t)\cdot x^i_{[0,t]} \in \bS_d^+, \quad a^0(t) - \sum_{i=1}^d a^i(t)\cdot x^i_{[0,t]} \in \bS_d^+.
    \end{equation*}
    By scaling $x \mapsto \lambda x$, $\lambda > 0$, this is only possible if
    \begin{equation*}
        \sum_{i=1}^d a^i(t)\cdot x^i_{[0,t]} = 0 \quad \text{for all } x \in C([0,t],\R^d).
    \end{equation*}
    Thus, $a(t,x) = a^0(t)$ is independent of the path, and the affine dependence is necessarily trivial.
    Consequently, in order to obtain a genuinely path-dependent affine diffusion coefficient, positivity of $a$ cannot be required on the entire linear path space. Instead, one must only require that
    \begin{equation*}
        a(t,X_{[0,t]}) \in \bS_d^+ \quad \text{for all } t \in [0,T], \quad P\text{-a.s.},
    \end{equation*}
    along the realized trajectories of the process $X$.
\end{remark}
To this end, we assume that the pairs $(a^i, X)$ satisfy a semimartingale differentiability similarly to Definition~\ref{def:semimartingale-diff} for each $i \in \{1,\dotsc,d\}$.
Since the diffusion term $a$ is in general a $d \times d$ matrix, the semimartingale representation takes the form
\begin{equation}\label{cond:for-diffusion-part}
    a^i(t) \cdot X^i_{[0,t]} = a^i(0)\cdot X^i_{[0,0]} - \int_{0}^{t} \Gamma^i(s,X^i_{[0,s]}) \, ds + \int_{0}^{t} \Pi^i(s) \, dX^i(s), \quad t \in [0,T],
\end{equation}
where $\Gamma^i : \ccC_T^1 \to \R^{d \times d}$ is an affine non-anticipative functional and $\Pi^i : [0,T] \to \R^{d \times d}$, for each $i \in \{1,\dotsc,d\}$.

In contrast to a Markov process it is not enough to analyze the process $X$ at the boundary but rather give inward pointing conditions directly on the affine variance term.
Therefore, we have two different regimes to explore. In the case when $d = 1$, the variance term is scalar valued, and we have to ensure that it stays in the cone $\R_{\geq 0}$.
The case $d \geq 2$ is more involved, since in this case $a(t,X)$ takes values in $\bS_d^+$ and the boundary is more complex.

We start with the one-dimensional case $d = 1$. Let $X$ be an affine one-dimensional path-dependent process.
In this case the diffusion term admits the semimartingale representation
\begin{equation}\label{eq:diffusion-semimartingale-repr-dim-1}
    da(t,X_{[0,t]}) = \left[\frac{d}{dt}a^0(t)\right]dt + \Pi(t) \, dX(t) - \Gamma(t,X_{[0,t]}) \,dt.
\end{equation}

If $\sigma$ denotes the unique positive semidefinite square root of $a$, then notice that on the boundary set $\{a(t,X_{[0,t]}) = 0\}$ Equation \eqref{eq:diffusion-semimartingale-repr-dim-1} reduces to
\begin{equation}
    da(t,X_{[0,t]}) = \left[\frac{d}{dt}a^0(t)\right] + \left( \Pi(t)b(t,X_{[0,t]}) - \Gamma(t,X_{[0,t]}) \right) dt \quad dt \otimes dP \text{-a.s.}
\end{equation}
Therefore, on this set, only the finite-variation drift
determines whether the process can move into the negative half-line. This
is the usual one-dimensional stochastic invariance condition. Thus, it is sufficient to require that
\begin{equation}
    \frac{d}{dt} a^0(t) +\Pi(t) b(t,X_{[0,t]}) -\Gamma(t,X_{[0,t]}) \geq 0 \quad \text{ on } \{a(t,X_{[0,t]})=0\}, \quad dt \otimes dP\text{-a.s.}
\end{equation}
We say in this case that $a(t,X_{[0,t]})$ is non-decreasing on the zero
set.

\begin{lemma}\label{lm:non-negativ-diffusion-dim-1}
    Let $d = 1$.
    Assume that Assumption \ref{ass:affine-coeff} holds, that $a^0(t)$ is continuously differentiable and that the pair $(a^1,X)$ is semimartingale differentiable as in \eqref{cond:for-diffusion-part} with coefficients $(\Gamma, \Pi)$, where $\Gamma$ is by assumption affine with representation $\Gamma(t,x) = \Gamma^0(t) + \Gamma^1(t) \cdot x_{[0,t]}$ for $(t,x) \in \ccC_T^1$.
    Suppose that $a(0,x(0)) \geq 0$ and there exists measurable functions $\lambda : [0,T] \to \R$, $r : [0,T] \to \R_{\geq 0}$ such that for all $t \in [0,T]$
    \begin{align}
        \left[\frac{d}{dt}a^0(t)\right] + \Pi(t)b^0(t) - \Gamma^0(t) & = \lambda(t)a^0(t) + r(t), \quad \forall t \in [0,T], \label{eq:matrix-condition-1-dim-1}                               \\
        \Pi(t)(b^1(t) \cdot x_{[0,t]}) - \Gamma^1(t) \cdot x_{[0,t]} & = \lambda(t)\bigl( a^1(t) \cdot x_{[0,t]} \bigr), \quad \forall (t,x) \in \ccC_T^1. \label{eq:matrix-condition-2-dim-1}
    \end{align}
    Then $a(t,X_{[0,t]})$ is non-decreasing on the zero set for all $t \in [0,T]$.
\end{lemma}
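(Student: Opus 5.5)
The plan is to compute the drift of $a(t,X_{[0,t]})$ explicitly and show that, on the zero set, it reduces to the non-negative function $r(t)$. Since $d=1$, Assumption~\ref{ass:affine-coeff} gives $a(t,X_{[0,t]}) = a^0(t) + a^1(t)\cdot X_{[0,t]}$.

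First I differentiate the two pieces. The deterministic part contributes $\frac{d}{dt}a^0(t)\,dt$, since $a^0$ is $C^1$. For the path part I invoke the semimartingale differentiability \eqref{cond:for-diffusion-part}. Then I substitute $dX(t) = b(t,X_{[0,t]})\,dt + \sigma(t,X_{[0,t]})\,dW(t)$ and insert the affine representations $b = b^0 + b^1\cdot x$ and $\Gamma = \Gamma^0 + \Gamma^1 \cdot x$. This should yield
\begin{align*}
da(t,X_{[0,t]}) &= \Bigl[\tfrac{d}{dt}a^0(t) + \Pi(t)b^0(t) - \Gamma^0(t)\Bigr]dt + \Bigl[\Pi(t)\bigl(b^1(t)\cdot X_{[0,t]}\bigr) - \Gamma^1(t)\cdot X_{[0,t]}\Bigr]dt \\
&\quad + \Pi(t)\sigma(t,X_{[0,t]})\,dW(t).
\end{align*}

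Next I apply the structural conditions. By \eqref{eq:matrix-condition-1-dim-1} and \eqref{eq:matrix-condition-2-dim-1}, the drift equals
\[
\lambda(t)\bigl(a^0(t) + a^1(t)\cdot X_{[0,t]}\bigr) + r(t) = \lambda(t)\,a(t,X_{[0,t]}) + r(t).
\]
Condition \eqref{eq:matrix-condition-2-dim-1} holds for every path in $\ccC_T^1$, so it can be evaluated along $X$ directly. On the set $\{a(t,X_{[0,t]})=0\}$ the drift therefore equals $r(t)\ge 0$. This is exactly the required inequality $\frac{d}{dt}a^0 + \Pi b - \Gamma \ge 0$ there, holding $dt\otimes dP$-a.s.

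I would also note that on this set $\sigma = \sqrt{a} = 0$, so the martingale part vanishes. This matches the reduced equation stated before the lemma. The initial condition $a(0,x(0))\ge 0$ is only needed for the later positivity argument and plays no role in the drift identity.

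The main obstacle is bookkeeping rather than depth. The drift identity is pathwise and must hold $dt\otimes dP$-a.s. To secure this, I would check that condition \eqref{eq:matrix-condition-2-dim-1} holding for all $(t,x)$ removes any support issue. I would also check that the integrals $\int \Gamma\,ds$ and $\int \Pi\, b\,ds$ are well defined. For this I use the integrability in Assumption~\ref{ass:affine-coeff} together with the implicit integrability in \eqref{cond:for-diffusion-part}.
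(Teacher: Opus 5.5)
Your proposal is correct and follows the paper's own argument. Both compute $da(t,X_{[0,t]})$ from the $C^1$ property of $a^0$ and the semimartingale differentiability of $(a^1,X)$, substitute the affine drift $b^0+b^1\cdot X_{[0,t]}$, and use \eqref{eq:matrix-condition-1-dim-1}--\eqref{eq:matrix-condition-2-dim-1} to identify the drift as $\lambda(t)a(t,X_{[0,t]})+r(t)$, which equals $r(t)\geq 0$ on the zero set; the paper's extra hyperplane discussion only motivates why the conditions take this form and is not needed for the stated implication.
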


\begin{proof}
    By Assumption \ref{ass:affine-coeff}, the drift of the one-dimensional process $X$ has the form
    \begin{equation*}
        b(t,X_{[0,t]})=b^0(t) + b^1(t)\cdot X_{[0,t]} \in \R,
    \end{equation*}
    with $b^0(t) \in \R$ and $b^1(t) \in \ccM([0,t],\R)$.
    By the semimartingale differentiability assumption on $(a^1, X)$, there exists $\R$-valued functionals $\Gamma$ and $\Pi$, such that
    \begin{equation*}
        d \bigl( a^1(t) \cdot X_{[0,t]} \bigr) = - \Gamma(t, X_{[0,t]}) \, dt + \Pi(t) \, dX(t).
    \end{equation*}
    Thus, in the same manner as in Equation \eqref{eq:diffusion-semimartingale-repr-dim-1} we get
    \begin{equation*}
        da(t,X_{[0,t]}) = \left[\frac{d}{dt}a^0(t)\right] dt + \Pi(t) \, dX(t) - \Gamma(t, X_{[0,t]}) \, dt.
    \end{equation*}
    Therefore, at the boundary $a(t,X_{[0,t]}) = 0$, we get $dt \otimes dP$-almost surely
    \begin{align*}
        da(t,X_{[0,t]}) & = \left[\frac{d}{dt}a^0(t)\right] dt + \bigl(\Pi(t) b(t,X_{[0,t]}) - \Gamma(t, X_{[0,t]}) \bigr) dt                                                            \\
        % & = \left[\frac{d}{dt}a^0(t)\right] dt + \biggl(\Pi(t) \Bigl(b^0(t) + b^1(t)\cdot X_{[0,t]}\Bigr) - \Gamma(t, X_{[0,t]}) \biggr) dt\\
                        & = \left[\frac{d}{dt}a^0(t)\right] dt + \bigl(\Pi(t)b^0(t) - \Gamma^0(t)\bigr) dt + \big(\Pi(t) (b^1(t) \cdot X_{[0,t]}) - \Gamma^1(t) \cdot X_{[0,t]}\big) dt.
    \end{align*}
    Therefore, non-negativity requires for the drift of $a$
    \begin{equation}\label{eq:non-negative-diffusion-cond-dim-1}
        \left[\frac{d}{dt}a^0(t)\right] + \bigl(\Pi(t) b^0(t) - \Gamma^0(t)\bigr) + \big(\Pi(t) (b^1(t) \cdot X_{[0,t]}) - \Gamma^1(t) \cdot X_{[0,t]}\big) \geq 0, \quad dt \otimes dP\text{-a.s} \text{ on } a(t,X) = 0.
    \end{equation}

    If $\eqref{eq:matrix-condition-1-dim-1}\&\eqref{eq:matrix-condition-2-dim-1}$ hold then \eqref{eq:non-negative-diffusion-cond-dim-1} is satisfied and $a(t,X_{[0,t]})$ is non-decreasing on the zero set for all $t \in [0,T]$. To be able to derive a sufficient condition that ensures
    \eqref{eq:non-negative-diffusion-cond-dim-1}, we check that this condition holds for all $(t,x) \in \ccC_T^1$.
    Denote the drift of $a$ by
    $$B_a(t,x_{[0,t]}) = \left[\frac{d}{dt}a^0(t)\right] + \Pi(t)b^0(t) - \Gamma^0(t) + \Pi(t) \bigl(b^1(t) \cdot x_{[0,t]}\bigr) - \Gamma^1(t) \cdot x_{[0,t]}$$
    and notice that $B_a$ is affine. Define the affine hyperplane
    $$\cH_t = \{x \in C([0,t],\R) : a(t,x_{[0,t]}) = 0\}.$$
    We want to give conditions under which $B_a(t,x_{[0,t]}) \geq 0$ for all $t \in [0,T]$ and $x \in \cH_t$. An affine functional which is non-negative on the affine hyperplane $\cH_t$ at time $t \in [0,T]$ must be constant on that hyperplane. This in particular implies that the linear part of the functional $B_a$ is given by the linear part of $a$ multiplied by a real-valued scalar. Denote those linear parts by $m$ and $\ell$ respectively, i.e. $a(t,x) = a^0(t) + \ell(t,x)$ and $B_a(t,x) = B_a^0(t) + m(t,x)$.
    Since $m(t,x) = \lambda(t) \ell(t,x)$ we get
    \begin{equation*}
        B_a(t,x) = B_a^0(t) + \lambda(t) \ell(t,x) = \lambda(t)(a^0(t) + \ell(t,x)) + (B_a^0(t)-\lambda(t)a^0(t)) = \lambda(t) a(t,x) + r(t),
    \end{equation*}
    where $r(t) = B_a^0(t) - \lambda(t)a^0$. On the boundary $a(t,x) = 0$ it holds that $B_a(t,x) = r(t)$ and by assumption this implies $r(t) \geq 0$ on $\cH_t$.

    Thus, we conclude that the condition $B_a(t,x_{[0,t]}) \geq 0$ for all $(t,x) \in \ccC_T^1$ with $a(t,x) = 0$ is guaranteed, for instance, if there exist measurable functions $\lambda : [0,T] \to \R$ and $r : [0,T] \to \R_{\geq 0}$ such that
    \begin{equation*}
        B_{a}(t,x_{[0,t]}) = \lambda(t) a(t,x_{[0,t]}) + r(t), \quad \forall (t,x) \in \ccC_T^1.
    \end{equation*}
    This in particular implies that for all $(t,x) \in \ccC^1_T$ with $a(t,x_{[0,t]}) = 0$
    \begin{align*}
        \left[\frac{d}{dt}a^0(t)\right] + \Pi(t) b^0(t) - \Gamma^0(t) & = \lambda(t)a^0(t) + r(t)             \\
        \Pi(t)(b^1(t) \cdot x_{[0,t]}) - \Gamma^1(t) \cdot x_{[0,t]}  & = \lambda(t)(a^1(t) \cdot x_{[0,t]}).
    \end{align*}
\end{proof}

\begin{remark}
    On the boundary $a(t,x) = 0$ only the scalar term $r(t)$ remains. Thus, if $r(t) = 0$, the boundary is tangent, if $r(t) > 0$ its strictly inward pointing and if $r(t) < 0$ the drift is strictly outward pointing.
\end{remark}

Let us discuss now what changes in the case $d \geq 2$. The affine
diffusion coefficient is then matrix-valued, and the relevant state space for
the variance process is the cone $\bS_d^+$. As before let
\begin{equation*}
    Y(t) :=a(t,X_{[0,t]}) = a^0(t)+\sum_{i=1}^d a^i(t)\cdot X^i_{[0,t]}, \quad t \in [0,T].
\end{equation*}
Assume, at least formally, that $a^0$ is absolutely continuous and that the
pairs $(a^i,X^i)$ satisfy a semimartingale differentiability condition of the
form
\begin{equation*}
    a^i(t)\cdot X^i_{[0,t]} = a^i(0)\cdot X^i_{[0,0]} - \int_0^t \Gamma^i(s,X^i_{[0,s]}) \, ds + \int_0^t \Pi^i(s) \, dX^i(s), \quad t \in [0,T],
\end{equation*}
where $\Gamma^i(s,\cdot)$ and $\Pi^i(s)$ take values in
$\bS_d$. The symmetry assumption is natural here, since $Y(t)$ is
supposed to remain a symmetric matrix.
Thus, we obtain formally
\begin{equation*}
    dY(t) = \cB(t,X_{[0,t]}) \, dt + \sum_{k=1}^d \cH^k(t,X_{[0,t]}) \, dW^k(t),
\end{equation*}
with
\begin{equation*}
    \cB(t,x) = \left[\frac{d}{dt}a^0(t)\right] + \sum_{i=1}^d \Bigl(\Pi^i(t)b^i(t,x) - \Gamma^i(t,x^i_{[0,t]}) \Bigr),
\end{equation*}
and
\begin{equation*}
    \cH^k(t,X_{[0,t]}) = \sum_{i=1}^d \Pi^i(t) e_i^\top \sqrt{Y(t)} e_k.
\end{equation*}
The viability of $Y$ in $\bS_d^+$ should then be understood in terms
of the geometry of the cone. If $Y \in \partial\bS_d^+$, then
$\ker(Y) \neq \{0\}$. The tangent cone at a point $H$ is characterized by the set
\begin{equation*}
    T_{\bS_d^+}(H) := \{M \in \bS_d : v^\top M v \geq 0,\ v \in \text{Ker}(H)\}.
\end{equation*}
Thus, a natural inward-pointing drift condition is
\begin{equation*}
    \cB(t,X_{[0,t]}) \in T_{\bS_d^+}(Y(t)),
\end{equation*}
or equivalently
\begin{equation*}
    v^\top \cB(t,X_{[0,t]})v \geq 0 \quad \text{for all } v\in\ker(Y(t)).
\end{equation*}
In terms of the positive normal cone
\begin{equation*}
    N^+_{\bS_d^+}(Y) := \{U\in\bS_d^+:\ UY=0\},
\end{equation*}
this can be written as
\begin{equation*}
    \Tr(U B(t,X_{[0,t]})) \geq 0 \quad \text{for all } U\in N^+_{\bS_d^+}(Y(t)).
\end{equation*}
In addition, the martingale part has to be compatible with the boundary of the
cone. A necessary tangency condition in the normal directions is
\begin{equation*}
    \Tr\big(U\,\cH^k(t,X_{[0,t]})\big)=0, \quad U\in N^+_{\bS_d^+}(Y(t)),\quad k=1,\dots,d.
\end{equation*}
Equivalently, for every $v\in\ker(Y(t))$
\begin{equation*}
    \sum_{i=1}^d v^\top \Pi^i(t)v e_i^\top \sqrt{Y(t)}e_k = 0, \quad k=1,\dots,d
\end{equation*}
which corresponds to the condition
\begin{equation*}
    \big(v^\top \Pi^1(t)v,\dots,v^\top \Pi^d(t)v\big)^\top \in \ker(Y(t)).
\end{equation*}
This shows why the multidimensional case is substantially more delicate than
the scalar one. In dimension one, the boundary of the state space consists only
of the point $0$, and the condition $Y(t)=0$ defines a single affine
constraint. This is what allows the hyperplane argument in Lemma
\ref{lm:non-negativ-diffusion-dim-1}. For $d\geq 2$, however, the boundary of
$\bS_d^+$ is more complex. The relevant
boundary conditions are therefore not encoded by one affine hyperplane, but by
the whole family of normal directions.
Consequently, the scalar argument does not directly extend to the matrix-valued
case. A full multidimensional invariance result would require a separate
analysis of these tangent and normal cone conditions, and we do not pursue this
in the present paper.

\begin{remark}\label{remark:dimension-d-1-well-posedness}
    Lemma \ref{lm:non-negativ-diffusion-dim-1} holds similarly true if we assume that, the linear functional $a^1(t) \cdot x_{[0,t]} \in \C^{1,2}_b$ for all $(t,x) \in \ccC_T^d$, as seen in Example~\ref{ex:functional-ito-semimartingale-diff}.
\end{remark}

Lemma~\ref{lm:non-negativ-diffusion-dim-1} implicitly assumes in the one-dimensional case that the diffusion part $a(t,X) = a^0(t) + a^1(t) \cdot X_{[0,t]}$ is a path-dependent CIR process.

\begin{proposition}\label{prop:CIR-reduction}
    In the setting of Lemma~\ref{lm:non-negativ-diffusion-dim-1},
    it holds that for every $t \in [0,T]$
    \begin{equation}\label{eq:variance-CIR-1-dim}
        da(t,X_{[0,t]}) = \bigl(\lambda(t)a(t,X_{[0,t]})+r(t)\bigr) dt + \Pi(t) \sqrt{a(t,X_{[0,t]})} dW(t).
    \end{equation}
    Furthermore, let $X$ solve the path-dependent equation \eqref{eq:existence-with-affine-coeff} with diffusion coefficient
    $\sqrt{a(t,X_{[0,t]})^+}$. If $a(0,X(0)) \geq 0$, then $a(t,X_{[0,t]}) \geq 0$ for all $t \in [0,T]$ a.s.
    Consequently, $X$ is a solution of the original square-root equation. 
\end{proposition}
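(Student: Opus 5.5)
We first derive the dynamics \eqref{eq:variance-CIR-1-dim} and then use them to show non-negativity.

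\textbf{Dynamics of the variance.} Write $a(t,X_{[0,t]}) = a^0(t) + a^1(t)\cdot X_{[0,t]}$. By the semimartingale differentiability \eqref{cond:for-diffusion-part} and continuous differentiability of $a^0$,
\begin{equation*}
 da(t,X_{[0,t]}) = \Bigl[\tfrac{d}{dt}a^0(t) - \Gamma^0(t) - \Gamma^1(t)\cdot X_{[0,t]}\Bigr]dt + \Pi(t)\,dX(t).
\end{equation*}
We substitute $dX(t) = (b^0(t)+b^1(t)\cdot X_{[0,t]})\,dt + \sqrt{a(t,X_{[0,t]})}\,dW(t)$. Now \eqref{eq:matrix-condition-1-dim-1} and \eqref{eq:matrix-condition-2-dim-1}, with $x = X$, are identities valid for every path and not only on the zero set. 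Using them, the drift collapses to $\lambda(t)a^0(t) + r(t) + \lambda(t)\,a^1(t)\cdot X_{[0,t]} = \lambda(t)a(t,X_{[0,t]}) + r(t)$. This gives \eqref{eq:variance-CIR-1-dim}.

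\textbf{Non-negativity.} Let $X$ solve \eqref{eq:existence-with-affine-coeff} with diffusion coefficient $\sqrt{a^+}$, and set $V(t) := a(t,X_{[0,t]})$. The computation above gives
\begin{equation*}
 dV = (\lambda V + r)\,dt + \Pi\sqrt{V^+}\,dW .
\end{equation*}
The path dependence has disappeared: $V$ solves a scalar SDE with time-dependent coefficients. We show $V^- \equiv 0$ via a Yamada--Watanabe/Tanaka type argument.

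First, apply Tanaka's formula to $V^- = (-V)^+$. Since $\Pi\sqrt{V^+}$ vanishes on $\{V\le 0\}$, the martingale term $-\int \Ind_{\{V<0\}}\Pi\sqrt{V^+}\,dW$ is zero. The local time of $V$ at $0$ also vanishes, because $d\langle V\rangle = \Pi^2 V^+\,dt$ and the occupation formula gives
\begin{equation*}
 \int_0^t \Ind_{\{V(s)\le 0\}}\,d\langle V\rangle(s) = 0 .
\end{equation*}
More carefully, one uses approximations $\phi_n$ of $x\mapsto x^-$ in the Yamada--Watanabe style, where the second-order term $\phi_n''(V)\,\Pi^2 V^+$ vanishes identically.

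This leaves
\begin{equation*}
 V^-(t) = V^-(0) - \int_0^t \Ind_{\{V<0\}}(\lambda V + r)\,ds \le \int_0^t |\lambda(s)|\,V^-(s)\,ds ,
\end{equation*}
where we used $r\ge 0$ and $V^-(0)=0$ from $a(0,X(0))\ge 0$. Taking expectations, with a localization if $\lambda$ is only integrable, Gronwall's lemma yields $E[V^-(t)]=0$. Continuity of $V$ then gives $V\ge 0$ for all $t$ almost surely. Consequently $\sqrt{a^+}=\sqrt{a}$ along the trajectory, so $X$ solves the original square-root equation.

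\textbf{Main obstacle.} The delicate step is justifying the Itô/Tanaka calculus. It requires $V$ to be a genuine continuous semimartingale with the stated decomposition, and $\lambda$, $r$, $\Pi$ to be integrable enough, for instance $\int_0^T |\lambda| + r + \Pi^2\,ds < \infty$, so that the stochastic integrals and Gronwall's lemma apply. We would impose local boundedness as needed and use stopping times $\tau_n = \inf\{t : |V(t)| \ge n\}$ to handle the expectations.
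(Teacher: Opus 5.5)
Your proof is correct. The derivation of \eqref{eq:variance-CIR-1-dim} matches the paper's: you differentiate $a^0(t)+a^1(t)\cdot X_{[0,t]}$ via \eqref{cond:for-diffusion-part}, substitute $dX$, and use that \eqref{eq:matrix-condition-1-dim-1}--\eqref{eq:matrix-condition-2-dim-1} hold for every path, not just on the zero set.

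The two routes differ at the invariance step. The paper only cites ``standard results'' on stochastic invariance of the CIR process. You instead prove $a(t,X_{[0,t]})\ge 0$ directly:
\begin{itemize}
\item Tanaka's formula for $V^-$, where the martingale term vanishes because $\sqrt{V^+}=0$ on $\{V<0\}$;
\item the occupation-time formula with $d\langle V\rangle=\Pi^2V^+\,dt$, plus right-continuity of local time in the level, to get $L^0(-V)=0$;
\item Gronwall.
\end{itemize}

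Your route gains something real. Here $V=a(\cdot,X_{[0,\cdot]})$ is defined as a functional of $X$, not as the unique solution of a CIR equation, and $\lambda$, $r$ are only assumed measurable. Off-the-shelf invariance or comparison theorems typically need continuity of the coefficients or pathwise uniqueness, so they do not apply verbatim. Your argument needs only $\lambda\in L^1([0,T])$, $r\ge 0$, and the semimartingale decomposition of $V$. The paper's version is shorter but leaves these points implicit.

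Both you and the paper implicitly assume that the representation \eqref{cond:for-diffusion-part} also holds for the solution of the $\sqrt{a^+}$-equation. You at least flag this.

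One simplification: both the stochastic integral and the local-time term vanish identically. So the inequality $V^-(t)\le\int_0^t|\lambda(s)|\,V^-(s)\,ds$ holds pathwise, and pathwise Gronwall (with $V^-$ continuous, $V^-(0)=0$) gives $V^-\equiv 0$ directly. Taking expectations and localizing with $\tau_n$ is unnecessary.
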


\begin{proof}
    As already established in Lemma~\ref{lm:non-negativ-diffusion-dim-1} the differential of $a(t,X)$ is given by
    \begin{align*}
        da(t,X_{[0,t]}) & = \left[\frac{d}{dt}a^0(t)\right] \, dt + \Pi(t) dX(t) - \Gamma(t, X_{[0,t]}) \, dt                                                   \\
                        & = \left(\left[\frac{d}{dt}a^0(t)\right] + \Pi(t)b(t,X_{[0,t]}) - \Gamma(t, X_{[0,t]}) \right)dt + \Pi(t) \sqrt{a(t,X_{[0,t]})} dW(t).
    \end{align*}
    By assumption, there exists functions $\lambda : [0,T] \to \R$ and $r : [0,T] \to \R_{\geq 0}$ such that
    \begin{align*}
        \left[\frac{d}{dt}a^0(t)\right] + \Pi(t)b^0(t) - \Gamma^0(t) & = \lambda(t)a^0(t) + r(t), \quad \forall t \in [0,T],                               \\
        \Pi(t)(b^1(t) \cdot x_{[0,t]}) - \Gamma^1(t) \cdot x_{[0,t]} & = \lambda(t)\bigl( a^1(t) \cdot x_{[0,t]} \bigr), \quad \forall (t,x) \in \ccC_T^1.
    \end{align*}
    Thus,
    \begin{equation*}
        da(t,X_{[0,t]}) = \bigl(\lambda(t)a(t,X_{[0,t]})+r(t)\bigr) dt + \Pi(t) \sqrt{a(t,X_{[0,t]})} dW(t).
    \end{equation*}
    Finally, note that $r : [0,T] \to \R_{\geq 0}$ and thus the stochastic invariance of the CIR process \eqref{eq:variance-CIR-1-dim} follows from standard results which proves the remaining claim.
\end{proof}

We now show that general path-dependent SDEs given by \eqref{path_dependent_sde_existence} admit a weak solution, under a linear growth assumption.
The existence of strong solutions under Lipschitz conditions can be found in Appendix~\ref{app:Auxiliary}.

\begin{theorem}\label{thm:weak-existence}
    Assume that the non-anticipative functionals $b$ and $\sigma$ in \eqref{path_dependent_sde_existence} are continuous and satisfy the linear growth Assumption \ref{LG-Condition}.
    Then, for any initial condition $X(0) \in \R^d$, there exists a weak solution $X = (X(t))_{t \in [0,T]}$ to equation \eqref{path_dependent_sde_existence}.
\end{theorem}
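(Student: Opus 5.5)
We follow the classical Skorokhod tightness scheme, as used for Volterra equations in \cite{abijaber_Affine_2019}: approximate by equations with Lipschitz coefficients, prove tightness, and identify the limit through a martingale problem.

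\emph{Step 1: approximation.} We construct non-anticipative functionals $b_n,\sigma_n$ on $\ccC_T^d$ that are Lipschitz in the path for the metric $d_{\ccC}$, satisfy the linear growth bound of Assumption~\ref{LG-Condition} with a constant independent of $n$, and converge to $b$ and $a=\sigma\sigma^\top$ uniformly on compact subsets of $\ccC_T^d$. Since $\sigma$ itself is only measurable, the diffusion must be approximated through $a$. Concretely, we take $\sigma_n := \sqrt{a_n}$, where $a_n$ is a Lipschitz approximation of $a$ with values in $\bS_d^+$ obtained by convolution.

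A simpler alternative avoids the smoothing. Use an Euler/delay scheme
$$X_n(t)=X(0)+\int_0^t b(\kappa_n(s),X_{n,[0,\kappa_n(s)]})\,ds+\int_0^t \sigma(\kappa_n(s),X_{n,[0,\kappa_n(s)]})\,dW(s), \qquad \kappa_n(s)=\lfloor ns\rfloor/n.$$
This is explicitly solvable step by step, needs no Lipschitz condition, and requires only measurability of $\sigma$. We will use this scheme.

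\emph{Step 2: uniform moment bounds and tightness.} By the BDG inequality and Assumption~\ref{LG-Condition}, for every $p\ge 2$
$$E\Bigl[\sup_{s\le t}\|X_n(s)\|^p\Bigr]\le C\Bigl(1+\int_0^t E\Bigl[\sup_{u\le s}\|X_n(u)\|^p\Bigr]ds\Bigr),$$
so Gronwall gives a bound uniform in $n$. The same tools give $E\|X_n(t)-X_n(s)\|^p\le C|t-s|^{p/2}$. By Kolmogorov's criterion, the laws of $X_n$ are tight in $C([0,T],\R^d)$, and Prokhorov yields a weakly convergent subsequence with limit law $Q$.

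\emph{Step 3: identification of the limit.} This is the main obstacle. We show that under $Q$, for every $f\in C_c^2(\R^d)$, the process
$$f(x(t))-\int_0^t \Bigl(\nabla f(x(s))\, b(s,x_{[0,s]})+\tfrac12\mathrm{tr}\bigl(a(s,x_{[0,s]})\nabla^2 f(x(s))\bigr)\Bigr)ds$$
is a martingale with respect to the canonical filtration. Under the approximating law, the corresponding expression evaluated at the frozen times $\kappa_n(s)$ is a martingale. Test it against bounded continuous $\ccF_s$-measurable functionals $h(x_{[0,s]})$ and pass to the limit. Two facts are needed:
\begin{itemize}
\item[(a)] continuity of $b$ and $a$ in $d_{\ccC}$, together with $d_{\ccC}\bigl((\kappa_n(s),x_{[0,\kappa_n(s)]}),(s,x_{[0,s]})\bigr)\to 0$ uniformly on compact sets of paths, which lets us replace frozen times by actual times;
\item[(b)] the uniform moment bounds, which give the uniform integrability needed under linear growth.
\end{itemize}
Here it is essential that only $a$, not $\sigma$, enters the generator, since $\sigma$ is merely measurable.

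\emph{Step 4: conclusion.} We invoke the standard equivalence between martingale problems and weak solutions (Stroock--Varadhan; see \cite{JacodShiryaev}). Possibly on an enlarged probability space, there is a Brownian motion $W$ with $X(t)=X(0)+\int_0^t b\,ds+\int_0^t\sigma\,dW$, obtained via a martingale representation with any measurable square root of $a$. If $m\neq d$, we enlarge the space by an independent Brownian motion to match the given $\sigma$.
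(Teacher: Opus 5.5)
Your proof is correct, but it takes a different route from the paper in both the approximation step and the identification step.

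\textbf{The paper's route.} The paper approximates $b$ and $\sigma$ themselves by Lipschitz non-anticipative functionals with a common linear-growth constant, using an inf-convolution followed by a radial projection (Lemma~\ref{lem:nonanticipative-lipschitz-approximation}). It solves each approximating equation strongly by Picard iteration (Theorem~\ref{thm:strong-existence}). It gets tightness from the same Kolmogorov/H\"older bounds you use (Lemma~\ref{lm:tightness}). It then identifies the limit in Lemma~\ref{lm:stability-of-weak-sol} by passing to the limit directly in $M^n=X^n-X^n(0)-\int\overline{b}^n(s,X^n)\,ds$ and in $M^n_iM^n_j-\langle M^n_i,M^n_j\rangle$, followed by a martingale representation.

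\textbf{Your route.} The frozen-time Euler scheme replaces the first two ingredients. It is solvable interval by interval and needs neither Lipschitz regularity nor strong uniqueness. The only convergence you need is
\[
d_{\ccC}\bigl((\kappa_n(s),x_{[0,\kappa_n(s)]}),(s,x_{[0,s]})\bigr)\le 1/n+\omega_x(1/n),
\]
where $\omega_x$ is the modulus of continuity of $x$; this tends to zero uniformly over a compact set $K$ of paths by equicontinuity. You combine it with uniform continuity of $b$ and $a$ on the compact set $\{(t,x_{[0,t]}):t\in[0,T],\,x\in K\}$. Your $C_c^2$ martingale problem encodes the same information as the paper's pair $(M,\langle M\rangle)$. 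The passage from it to a weak solution, with a measurable $\sigma$ and an extension carrying an $m$-dimensional Brownian motion, is standard in its path-dependent form.

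\textbf{What each buys.} Your argument is more self-contained and slightly more general. It works when $\sigma$ is only measurable with $a=\sigma\sigma^\top$ continuous, which is actually the standing assumption at the start of Section~\ref{section:existence}. The theorem, and the paper's proof, which must approximate $\sigma$ itself, use continuity of $\sigma$. The paper's route in turn reuses strong-existence machinery it develops anyway, and yields a stability lemma for general converging coefficient sequences.

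\textbf{Two minor remarks.} Since the theorem assumes $\sigma$ continuous, your worry about $\sigma$ being merely measurable is not needed here, though it costs nothing. Your discarded suggestion of producing $a_n$ ``by convolution'' would not make literal sense on an infinite-dimensional path space (this is why an inf-convolution is used instead). You do not rely on it, so it is not a gap.
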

To show weak existence we need the following lemmas.

\begin{lemma}\label{lm:tightness}
    Let $\cX$ denote the set of all continuous processes $X$ solving \eqref{path_dependent_sde_existence} with a fixed initial condition $X(0) \in \R^d$,
    and with coefficients $b$ and $\sigma$ that are non-anticipative, continuous and satisfy the linear growth Assumption \ref{LG-Condition} with a fixed constant $C_{LG}$.
    Then the family of laws $\{\cL (X) : X \in \cX \}$ is tight in $C([0,T], \R^d)$.
\end{lemma}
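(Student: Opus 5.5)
We prove tightness with Kolmogorov's tightness criterion on $C([0,T],\R^d)$. Since the initial value $X(0)$ is fixed and deterministic, the family of initial laws is trivially tight. It therefore suffices to find $p>2$ and a constant $K$, depending only on $p$, $T$, $C_{LG}$ and $\|X(0)\|$ (and not on the particular $X\in\cX$), such that
\begin{equation*}
    E\bigl[\|X(t)-X(s)\|^{p}\bigr] \leq K\,|t-s|^{p/2}, \quad 0\leq s\leq t\leq T.
\end{equation*}
This yields the increment condition with exponent $1+\varepsilon$, where $\varepsilon=p/2-1>0$.

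\textbf{Step 1: a uniform moment bound.} We first show $E[\sup_{u\le T}\|X(u)\|^p]\le K_0$ uniformly over $\cX$. Because the running supremum may not have finite $p$-th moment a priori, we localize with $\tau_n:=\inf\{t:\|X(t)\|\ge n\}\wedge T$ and set $f_n(t):=E[\sup_{u\le t\wedge\tau_n}\|X(u)\|^p]$, which is finite. We split $X$ into its initial value, drift and martingale parts and use $(a+b+c)^p\le 3^{p-1}(a^p+b^p+c^p)$. The drift part is handled by Hölder's inequality. The martingale part is handled by the Burkholder--Davis--Gundy inequality followed by Hölder's inequality. For both, Assumption~\ref{LG-Condition} gives the pointwise bound
\begin{equation*}
    \|b(s,X_{[0,s]})\|^p+\|\sigma(s,X_{[0,s]})\|^p \le 2^{p}C_{LG}^p\bigl(1+\sup_{u\le s}\|X(u)\|\bigr)^p .
\end{equation*}
Together these give
\begin{equation*}
    f_n(t)\le C_1+C_2\int_0^t f_n(s)\,ds,
\end{equation*}
with constants depending only on $p$, $T$, $C_{LG}$ and $\|X(0)\|$. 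Gronwall's lemma then gives $f_n(T)\le C_1e^{C_2T}$. Since $X$ is continuous, $\tau_n\uparrow T$, and Fatou's lemma (or monotone convergence) removes the localization.

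\textbf{Step 2: increment bound.} For $s\le t$ we write
\begin{equation*}
    X(t)-X(s)=\int_s^t b\,du+\int_s^t\sigma\,dW.
\end{equation*}
For the drift term, Hölder's inequality gives
\begin{equation*}
    E\Bigl\|\int_s^t b\,du\Bigr\|^p\le (t-s)^{p-1}\int_s^tE\|b\|^p\,du .
\end{equation*}
For the stochastic integral, BDG followed by Hölder gives
\begin{equation*}
    E\Bigl\|\int_s^t\sigma\,dW\Bigr\|^p\le C_p(t-s)^{p/2-1}\int_s^tE\|\sigma\|^p\,du .
\end{equation*}
By the linear growth bound and Step 1, $E\|b\|^p$ and $E\|\sigma\|^p$ are bounded by $2^pC_{LG}^p(1+K_0)$ uniformly in $u$. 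Hence
\begin{equation*}
    E\|X(t)-X(s)\|^p\le K\bigl((t-s)^p+(t-s)^{p/2}\bigr)\le K'(t-s)^{p/2},
\end{equation*}
using $t-s\le T$. Choosing $p=4$ gives the exponent $2=1+1$. Kolmogorov's criterion (tightness version) then yields tightness of $\{\cL(X):X\in\cX\}$.

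\textbf{Main obstacle.} The only delicate point is the uniformity of the constants over $\cX$. Different elements of $\cX$ may have different coefficients and may even live on different probability spaces, so the estimates must use only $C_{LG}$, $T$, $p$ and $X(0)$. Path-dependence causes no real difficulty here, because the linear growth bound is stated in terms of the running supremum, which is exactly the quantity controlled in Step 1. The other care point is the localization in Step 1: Gronwall's lemma must be applied to a quantity already known to be finite, which is why $f_n$ is introduced.
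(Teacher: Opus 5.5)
Your proof is correct and follows essentially the paper's route. Your uniform moment bound and the increment estimate $E\|X(t)-X(s)\|^{p}\le K(t-s)^{p/2}$ are exactly the paper's Lemmas~\ref{lm:estimate-existence} and~\ref{lm:hoelder-continuity}; where the paper then gets a uniform bound on $E|X|_{C^\alpha}^{2m}$ from Kolmogorov's continuity theorem and concludes with Markov's inequality and Arzel\`a--Ascoli, you invoke the packaged Kolmogorov tightness criterion, which is proved in that same way. Your explicit choice $p=4$ is also slightly more careful than the paper's ``fix $m\geq 1$'', since for $m=1$ no positive H\"older exponent is available.
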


\begin{proof}
    Fix $m \geq 1$ and let $X \in \cX$. By Lemma~\ref{lm:hoelder-continuity}, for any $\alpha \in [0, \frac{1}{2} - \frac{1}{2m})$ there exists a constant $c > 0$ depending only on $m, T, C_{LG}, \|X(0)\|^{2m}$, but is uniform over all $X \in \cX$, such that for the $\alpha$-Hölder seminorm $|\cdot|_{C^\alpha}$,
    \begin{equation}
        E\bigl[|X|_{C^\alpha}^{2m}\bigr] = E\left[\left(\sup_{0 \leq s < t \leq T} \frac{\|X(t)-X(s)\|}{|t-s|^\alpha}\right)^{2m}\right] \leq c.
    \end{equation}
    By Markov's inequality, for any $R' > 0$,
    \begin{equation*}
        \sup_{X \in \cX} P(|X|_{C^\alpha} \geq R') \leq \frac{c}{(R')^{2m}}.
    \end{equation*}
    Since $X(0)$ is deterministic, we have $\sup_{t \in [0,T]} \|X(t)\| \leq \|X(0)\| + T^\alpha |X|_{C^\alpha}$.
    Hence, if we choose $R = \|X(0)\| + T^\alpha R'$ and denoting by $B_R^\alpha$ the closed ball of radius $R$ w.r.t.~$|\cdot|_{C^\alpha}$, we obtain
    \begin{equation*}
        \sup_{X \in \cX} P \left( X \notin B_R^\alpha \right) \leq \sup_{X \in \cX} P(|X|_{C^\alpha} \geq R') \leq  \frac{c}{(R')^{2m}}.
    \end{equation*}
    For a given $\varepsilon > 0$, choose $R' = (\tfrac{c}{\varepsilon})^{\tfrac{1}{2m}}$, such that
    \begin{equation*}
        \sup_{X \in \cX} P \left( X \notin B_R^\alpha \right) \leq \varepsilon.
    \end{equation*}
    Finally, since closed balls in $C^\alpha([0,T], \R^d)$ are compact in $C([0,T], \R^d)$ by the Arzelà-Ascoli theorem, we conclude that the family $\cX$ is tight in $C([0,T], \R^d)$.

\end{proof}
For the following lemma, it is convenient to realize the non-anticipative coefficients on the fixed space $[0,T] \times C([0,T],\R^d)$ rather than on the stopped-path space $\ccC_T^d$. To this end, we define the lifted maps
\begin{equation}\label{eq:lifted-maps}
    \overline{b} : [0,T] \times C([0,T], \R^d) \to \R^d, \quad \overline{\sigma} : [0,T] \times C([0,T], \R^d) \to \R^{d \times m},
\end{equation}
by
\begin{equation*}
    \overline{b}(t,x) = b(t,x|_{[0,t]}),\quad \overline{\sigma}(t,x) = \sigma(t,x|_{[0,t]}).
\end{equation*}
In particular, $\overline{b}(t,x)$ and $\overline{\sigma}(t,x)$ depend only on the restriction of $x \in C([0,T],\R^d)$ to $[0,t]$. Thus, the lifted coefficients remain non-anticipative while being represented on the fixed path space $C([0,T],\R^d)$.

\begin{lemma}\label{lm:stability-of-weak-sol}
    For each $n \in \N$, let $X^n$ be a weak solution of
    \begin{equation*}
        X^n(t) = X^n(0) + \int_0^t \overline{b}^n(s,X^n) \, ds + \int_0^t \overline{\sigma}^n(s,X^n) \, dW^n(s), \quad t\in[0,T],
    \end{equation*}
    where $\overline{b}^n : [0,T] \times C([0,T],\R^d) \to \R^d $ and $\overline{\sigma}^n : [0,T] \times C([0,T],\R^d) \to \R^{d\times m}$ are continuous non-anticipative functionals satisfying the linear growth Assumption~\ref{LG-Condition} with a common constant $C_{LG}$.
    Suppose further that $\overline{b}^n \to \overline{b}$, $\overline{\sigma}^n \to \overline{\sigma}$,
    locally uniformly on $[0,T]\times C([0,T],\R^d)$, where $\overline{b}$ and $\overline{\sigma}$ are the lifted coefficients defined above, and that $X^n \Rightarrow X$ for some continuous process $X$.
    Then $X$ is a weak solution of \eqref{path_dependent_sde_existence}.
\end{lemma}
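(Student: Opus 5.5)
We use the martingale-problem characterization of weak solutions: $X$ is a weak solution of \eqref{path_dependent_sde_existence} (with the lifted coefficients $\overline b,\overline\sigma$) if and only if, for every $f\in C_c^2(\R^d)$, the process
\begin{equation*}
M^f(t):=f(X(t))-f(X(0))-\int_0^t \Bigl(\nabla f(X(s))\,\overline b(s,X)+\tfrac12\Tr\bigl(\overline a(s,X)\nabla^2 f(X(s))\bigr)\Bigr)ds
\end{equation*}
is a martingale with respect to the filtration generated by $X$. Here $\overline a=\overline\sigma\,\overline\sigma^\top$. The converse direction, from martingale problem to weak solution, is the standard representation theorem: enlarge the space if necessary and build $W$ via a measurable pseudo-inverse of $\overline\sigma$. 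It suffices to verify $E[(M^f(t)-M^f(s))\,h(X_{[0,s]})]=0$ for all $s\le t$ and all bounded continuous functionals $h$ on $C([0,s],\R^d)$. Since $X^n(0)\Rightarrow X(0)$ is implied by $X^n\Rightarrow X$, no separate argument is needed for the initial value.

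\textbf{Step 1: the approximating martingale problems.} For each $n$, Itô's formula gives that $M^{f,n}$, defined with $(X^n,\overline b^n,\overline a^n)$, is a local martingale. The linear growth bound with the common constant $C_{LG}$ and the moment bound of Lemma~\ref{lm:hoelder-continuity} give $\sup_n E[\sup_t\|X^n(t)\|^{2m}]<\infty$. Since $f\in C_c^2$, the integrand in $M^{f,n}$ is bounded by $C(1+\sup_{u\le s}\|X^n(u)\|^2)$. Hence $M^{f,n}$ is a true martingale and satisfies a uniform $L^2$ bound. Consequently $E[(M^{f,n}(t)-M^{f,n}(s))h(X^n)]=0$ for every $n$.

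\textbf{Step 2: passing to the limit.} Define the functional $\Phi^n(x):=(M^{f,n}_x(t)-M^{f,n}_x(s))h(x)$ on $C([0,T],\R^d)$, and let $\Phi$ be the same expression built from $\overline b,\overline a$. Continuity of $\overline b,\overline\sigma$ makes $\Phi$ continuous; the integrals converge by dominated convergence in $s$. Local uniform convergence $\overline b^n\to\overline b$ and $\overline a^n\to\overline a$ then gives $\Phi^n\to\Phi$ uniformly on compact subsets of $C([0,T],\R^d)$. The compact sets to use are those of Lemma~\ref{lm:tightness}, which are uniform in $n$ because $C_{LG}$ is common. We then split
\begin{equation*}
E[\Phi(X)]=\bigl(E[\Phi(X)]-E[\Phi(X^n)]\bigr)+E[\Phi(X^n)-\Phi^n(X^n)],
\end{equation*}
using $E[\Phi^n(X^n)]=0$ from Step 1.

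The first term tends to zero because $X^n\Rightarrow X$. Since $\Phi$ is continuous but unbounded, we need uniform integrability, which follows from the uniform $L^{2}$ (indeed $L^{2m}$) bounds on $\Phi(X^n)$. For the second term, fix $\varepsilon$ and choose a compact $K$ with $\sup_n P(X^n\notin K)<\varepsilon$ by Lemma~\ref{lm:tightness}. On $K$ the difference $\Phi-\Phi^n$ is uniformly small. Off $K$, Cauchy--Schwarz together with the uniform second moments bounds the contribution by $C\varepsilon^{1/2}$. Hence $E[\Phi(X)]=0$, so $M^f$ is a martingale.

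\textbf{Main obstacle.} The delicate step is the uniform integrability and the off-compact estimate. These need the moment bounds to hold uniformly in $n$, which is exactly where the common linear growth constant enters. They also need $\overline a^n$ rather than $\overline\sigma^n$ to converge locally uniformly; this follows from the convergence of $\overline\sigma^n$ together with local boundedness on compacts. A secondary technicality is that $\overline\sigma$ is only required to be measurable, not continuous, in the target equation. This does not matter, since only $\overline a$ enters the martingale problem, and $\overline a$ is continuous as the local uniform limit of the continuous $\overline a^n$.
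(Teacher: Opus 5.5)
Your argument is correct and follows essentially the same route as the paper: you pass to the limit in a martingale characterization of $X^n$ using $X^n\Rightarrow X$ and uniform integrability, which comes from moment bounds that are uniform in $n$ because $C_{LG}$ is common, and you then recover a Brownian motion on an enlarged space via the representation theorem. The only real difference is the choice of test functions: the paper works directly with $M^n=X^n-X^n(0)-\int_0^\cdot\overline b^n(s,X^n)\,ds$ and $M^n_iM^n_j-\langle M^n_i,M^n_j\rangle$ (linear and quadratic test functions) and gets $M^n\Rightarrow M$ from the continuous mapping theorem, whereas you use the $C_c^2$ Stroock--Varadhan formulation and control the $n$-dependence of the functional with an explicit compact/off-compact split, which makes rigorous a step the paper only sketches.
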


\begin{proof}
    For each $n \in \N$, define the continuous local martingale
    \begin{equation*}
        M^n(t) = X^n(t) - X^n(0) - \int_0^t \overline{b}^n(s,X^n)\,ds = \int_0^t \overline{\sigma}^n(s,X^n) \, dW^n(s), \quad t \in [0,T].
    \end{equation*}
    Joint continuity and the locally uniform convergence of $\overline{b}^n$ and $\overline{\sigma}^n$ now imply with the continuous mapping theorem that $M^n \Rightarrow M$ for some limit $M$ and
    \begin{equation*}
        \langle M^n \rangle = \int \overline{\sigma}^n(t,X^n)  \overline{\sigma}^{n} (t,X^n)^\top \, dt \, \Rightarrow \int \overline{\sigma}(t,X)  \overline{\sigma} (t,X)^\top \,dt, \quad \int \overline{b}^n(t,X^n) \, dt \, \Rightarrow \int \overline{b}(t,X) \, dt.
    \end{equation*}
    Fix any $0 \leq s < t \leq T$, $m\in\N$, times $0 \leq t_1 \leq \dotsc \leq t_m \leq s$, and any bounded continuous function
    $f:\R^{d \times m} \to \R$. Observe that the moment bound in Lemma \ref{lm:estimate-existence} is uniform in $n$ since the
    $X^n$ satisfy the linear growth condition \ref{LG-Condition} with a common constant.
    Since $M^n$ is a martingale w.r.t. the filtration generated by $X^n$, using \cite[Theorem 3.5]{billingsley_Convergence_1999}, one then show that
    \begin{equation*}
        E\big[f(X(t_1),\dots,X(t_m))\,(M(t)-M(s))\big] = \lim_{n \to \infty} E\bigl[f(X^n(t_1),\dotsc,X^n(t_m))\,(M^n(t)-M^n(s))\bigr]=0,
    \end{equation*}
    and similarly for the increments $M_i^nM_j^n - \langle M_i^n, M^n_j \rangle$.
    Therefore, $M$ is a martingale with respect to the filtration generated by $X$ with quadratic variation $\langle M \rangle = \int \overline{\sigma}(t,X)\overline{\sigma}(t,X)^\top dt$.
    This carries over to the usual augmentation.
    Enlarging the probability space if necessary, we may construct a $m$-dimensional Brownian motion $\overline{W}$ such that
    \begin{equation*}
        M(t) = \int_0^t \overline{\sigma}(s,X) \, d\overline{W}(s).
    \end{equation*}
    Therefore,
    \begin{equation*}
        X(t) = X(0) + \int_0^t \overline{b}(s,X) \, ds + \int_0^t \overline{\sigma}(s,X) \, d\overline{W}(s),
    \end{equation*}
    solves \eqref{path_dependent_sde_existence} with the Brownian motion $\overline{W}$.
\end{proof}

\begin{proof}[Proof of Theorem \ref{thm:weak-existence}]
    We first approximate the coefficients directly on the stopped-path space $\ccC_T^d$. Since $b$ and $\sigma$ are continuous non-anticipative functionals satisfying the linear growth condition \ref{LG-Condition}, Lemma~\ref{lem:nonanticipative-lipschitz-approximation} yields sequences of continuous non-anticipative functionals
    \begin{equation*}
        b^n : \ccC_T^d \to \R^d, \quad \sigma^n: \ccC_T^d \to \R^{d\times m}, \quad n \in \N,
    \end{equation*}
    such that each $b^n$, $\sigma^n$ is Lipschitz in the path variable uniformly in time, satisfies the linear growth condition with a common constant $C'_{LG}>0$ independent of $n$, and
    $b^n\to b$, $\sigma^n\to \sigma$ locally uniformly on $\ccC_T^d$.

    For each $n \in \N$, we lift these coefficients to the fixed path space by setting
    \begin{equation*}
        \overline{b}^n(t,x) := b^n(t,x|_{[0,t]}), \quad \overline{\sigma}^n(t,x) := \sigma^n(t,x|_{[0,t]}),
    \end{equation*}
    and likewise
    \begin{equation*}
        \overline{b}(t,x) := b(t,x|_{[0,t]}), \quad \overline{\sigma}(t,x) := \sigma(t,x|_{[0,t]}),
    \end{equation*}
    for $(t,x)\in [0,T] \times C([0,T],\R^d)$.
    By construction, the lifted coefficients $\overline{b}^n,\overline{\sigma}^n$ remain continuous and non-anticipative on $[0,T] \times C([0,T],\R^d)$. Moreover, since $b^n \to b$ and $\sigma^n \to \sigma$ locally uniformly on $\ccC_T^d$, we also have
    \begin{equation*}
        \overline{b}^n \to \overline{b}, \quad \overline{\sigma}^n \to \overline{\sigma}
    \end{equation*}
    locally uniformly on $[0,T] \times C([0,T],\R^d)$.

    Now fix $n \in \N$. Since $b^n$ and $\sigma^n$ satisfy the Lipschitz and linear growth assumptions of Theorem~\ref{thm:strong-existence}, there exists a unique strong solution $X^n$ of
    \begin{equation*}
        X^n(t) = X^n(0) + \int_0^t b^n(s,X^n_{[0,s]}) \,ds + \int_0^t \sigma^n(s,X^n_{[0,s]}) \,dW^n(s), \quad t \in [0,T].
    \end{equation*}
    Equivalently, in terms of the lifted coefficients,
    \begin{equation*}
        X^n(t) = X^n(0) + \int_0^t \overline{b}^n(s,X^n) \,ds + \int_0^t \overline{\sigma}^n(s,X^n) \,dW^n(s), \quad t \in [0,T].
    \end{equation*}

    Since the coefficients $b^n$, $\sigma^n$ satisfy the linear growth condition with a common constant $C'_{LG}$, Lemma~\ref{lm:tightness} implies that the family $\{X^n\}_{n \in \N}$ is tight in $C([0,T],\R^d)$. Hence, by Prokhorov's theorem, there exists a subsequence, again denoted by $\{X^n\}_{n \in \N}$, and a continuous process $X$ such that
    \begin{equation*}
        X^n \Rightarrow X \quad \text{ in } C([0,T],\R^d).
    \end{equation*}

    Applying Lemma~\ref{lm:stability-of-weak-sol} to the lifted coefficients, we conclude that $X$ is a weak solution of
    \begin{equation*}
        X(t) = X(0) + \int_0^t \overline{b}(s,X) \,ds + \int_0^t \overline{\sigma}(s,X) \,dW(s), \quad t \in [0,T],
    \end{equation*}
    for some Brownian motion $W$.
    Finally, by definition of the lift,
    \begin{equation*}
        \overline{b}(t,X) = b(t,X_{[0,t]}),\quad
        \overline{\sigma}(t,X) = \sigma(t,X_{[0,t]}),
    \end{equation*}
    and therefore $X$ is a weak solution of the original path-dependent SDE \eqref{path_dependent_sde_existence}. This proves the claim.
\end{proof}

\section{Examples and applications}\label{sec:examples}

\subsection{Affine processes with non-standard state spaces}
In this section we consider an affine process with path-dependent volatility whose state space depends on the realized path. This situation is surprisingly different to classical affine processes, where a standard choice of the state space is $\R_{\geq 0} ^m \times \R^n$, which in the path-dependent case will no longer be typical.

To illustrate this, we consider a weak solution of the SDE
\begin{align}\label{eq:pd-vol-model-general}
    dX(t) & =  \Big(\kappa(t) - c'(t) + X(t) - \Ind_{\{t > 1\}}X(t-1) \Big) \, dt
    +  \sqrt{c(t) + X(t) - \int_{(t-1)^+}^{t} X(s) \, ds} \, dW(t),
\end{align}
with $\kappa \in C([0,T], \R_{\geq 0})$, $c \in C^1([0,T], \R)$ and $X(0) \in \R$ with $c(0) + X(0) \geq 0$ where $c'(t)$ denotes the time-$t$ derivative of $c$.

This model introduces path-dependence through a rolling window. It
satisfies the conditions stated in Lemma~\ref{lm:non-negativ-diffusion-dim-1} and ensures non-negativity of the affine variance functional along solutions. Observe that $X$ is a path-dependent affine process with
non-anticipative functional drift $b$ and diffusion coefficient $\sigma^2 = a$ satisfying
\begin{align}\label{parameters:pd-vol}
\begin{split}
b(s,x) & = b^0(s) + \int_{[0,s]} x(u)\, b^1(s,du)
= \kappa(s)-c'(s)+x(s)-\Ind_{\{s>1\}}x(s-1),      \\
a(s,x) & = a^0(s) + \int_{[0,s]} x(u)\, a^1(s,du)
= c(s) + x(s)  -\int_{(s-1)^+}^{s}x(u) \,du ,
\end{split}
\end{align}
for $(s,x) \in \ccC_T^1$.
The corresponding coefficients are therefore
\begin{equation*}
    b^0(s)=\kappa(s)-c'(s), \quad a^0(s)=c(s),
\end{equation*}
and
\begin{equation*}
    b^1(s,du)=\delta_s(du)-\Ind_{\{s>1\}}\delta_{s-1}(du), \quad
    a^1(s,du)=\delta_s(du)-\Ind_{((s-1)^+,s]}(u)\,du.
\end{equation*}

The drift has been chosen in such a way that the affine variance term $a(t,X_{[0,t]})$ remains non-negative for all possible trajectories of the underlying process $X$. Indeed, notice first that the process $(a^1(t) \cdot X_{[0,t]})_{t \in [0,T]}$ is semimartingale differentiable in the sense of Definition \ref{def:semimartingale-diff} with coefficients $(\Gamma, 1)$, where the affine functional
$$\Gamma(t,x_{[0,t]}) = \Gamma^0(t) + \Gamma^1(t) \cdot x_{[0,t]}$$
is given by
\begin{equation*}
    \Gamma^0(t) \equiv 0, \quad \Gamma^1(t,du) = \delta_t(du) - \Ind_{\{t > 1\}}\delta_{t-1}(du).
\end{equation*}
Hence, $\Gamma(t,x_{[0,t]}) = x(t) - x(t-1)\Ind_{\{t > 1\}}$ for all $(t,x) \in \ccC_T^1$.
By Lemma~\ref{lm:non-negativ-diffusion-dim-1}, it is therefore enough to choose an affine drift of the form $b(t,x) = b^0(t) + \int_{[0,t]} x(u) \, b^1(t,du)$,
such that there exist functions $\lambda : [0,T] \to \R$ and $r : [0,T] \to \R_{\geq 0}$ satisfying, for all $(t,x) \in \ccC_T^1$,
\begin{align}\label{pd-vol-condition}
    \begin{split}
        b^0(t)                 & = \lambda(t)c(t) + r(t) - c'(t),                                                                 \\
        b^1(t) \cdot x_{[0,t]} & = x(t) - x(t-1)\Ind_{\{t > 1\}} + \lambda(t) \biggl(x(t) - \int_{(t-1)^+}^{t} x(s) \, ds\biggr).
    \end{split}
\end{align}
This leads to the following two natural specifications.
\begin{enumerate}
    \item Taking $\lambda\equiv 0$, $r(t)=\kappa(t)$, and hence
          \begin{equation*}
              b^0(t) = \kappa(t) - c'(t), \quad b^1(t,du) = \delta_t(du) - \Ind_{\{t>1\}} \delta_{t-1}(du),
          \end{equation*}
          satisfies \eqref{pd-vol-condition} and yields the model
          \begin{equation*}
              X(t) = X(0) + \int_0^t \Bigl(\kappa(s) - c'(s) + X(s) - \Ind_{\{s>1\}}X(s-1)\Bigr) \,ds
              +\int_0^t \sqrt{c(s) + X(s) - \int_{(s-1)^+}^{s}X(u) \,du} \,dW(s).
          \end{equation*}
          This specification will be used as the canonical model in this subsection.
    \item Let $\theta: [0,T] \to \R_{\geq 0}$. Taking
          \begin{equation*}
              \lambda(t) = -\kappa(t), \quad
              r(t) = \kappa(t)\theta(t),
          \end{equation*}
          and therefore
          \begin{equation*}
              b^0(t) = \kappa(t)(\theta(t)-c(t))-c'(t),
          \end{equation*}
          together with
          \begin{equation*}
              b^1(t,du) = \delta_t(du)-\Ind_{\{t>1\}}\delta_{t-1}(du) -\kappa(t)\bigl(\delta_t(du)-\Ind_{((t-1)^+,t]}(u)\,du\bigr),
          \end{equation*}
          also satisfies \eqref{pd-vol-condition} and yields the model
          \begin{align*}
              X(t) & = X(0) +  \int_0^t \Biggl( X(s)-\Ind_{\{s>1\}}X(s-1) + \kappa(s)\biggl(\theta(s)-c(s)-X(s)+\int_{(s-1)^+}^{s}X(u)\,du\biggr) - c'(s) \Biggr)ds \\
                   & \quad + \int_0^t \sqrt{\,c(s)+X(s)-\int_{(s-1)^+}^{s}X(u)\,du\,}\,dW(s).
          \end{align*}
          In this case, the variance process
          \begin{equation*}
              Y(t) := a(t,X_{[0,t]}) = c(t)+X(t)-\int_{(t-1)^+}^{t}X(u)\,du
          \end{equation*}
          satisfies the classical time-inhomogeneous CIR dynamics
          \begin{equation*}
              dY(t) = \kappa(t)\bigl(\theta(t)-Y(t)\bigr) \,dt + \sqrt{Y(t)} \,dW(t).
          \end{equation*}
\end{enumerate}

The following proposition shows that, for general choices of $c$, the
natural state constraint is not a fixed half-line constraint on $X$, but
rather a path-dependent constraint on the volatility functional.

\begin{proposition}\label{prop:whole-state-space}
    Let $T \in [1,\infty)$, $\kappa \in C([0,T], \R_{\geq 0})$, $c \in C^1([0,T], \R)$ and $X(0) \in \R$ with $c(0) + X(0) \geq 0$.
    Let $X$ solve \eqref{eq:pd-vol-model-general}.
    Then the process $X$ is in general not constrained to a state space $\R_{\geq 0}$ or $\R_{\leq 0}$ for any initial condition $X(0) \in \R$.
\end{proposition}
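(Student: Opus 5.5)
Since the statement is of the form ``in general not constrained'', the plan is to exhibit explicit parameter choices, allowed by the hypotheses, for which $X$ takes both positive and negative values with positive probability. The only thing that is actually constrained is the variance functional $Y(t)=a(t,X_{[0,t]})=c(t)+X(t)-\int_{(t-1)^+}^t X(u)\,du$. By the discussion after \eqref{pd-vol-condition} and Proposition~\ref{prop:CIR-reduction}, $Y$ satisfies $dY=\kappa(t)\,dt+\sqrt{Y}\,dW$ with $Y(0)=c(0)+X(0)\ge 0$, so $Y(t)\ge0$ for all $t$ almost surely. First I rewrite the model in terms of $Y$. Set $Z(t):=\int_{(t-1)^+}^t X(u)\,du$. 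For $t\le 1$ we have $Z'(t)=X(t)$, and for $t>1$ we have $Z'(t)=X(t)-X(t-1)$. Hence $X(t)=Y(t)-c(t)+Z(t)$, and $Z$ solves a linear delay ODE driven by the nonnegative process $Y$: on $[0,1]$, $Z'=Z+Y-c$ with $Z(0)=0$. This gives the explicit formula
\[
Z(t)=\int_0^t e^{t-s}\bigl(Y(s)-c(s)\bigr)\,ds,\qquad X(t)=Y(t)-c(t)+Z(t),\quad t\in[0,1].
\]
This identity reduces the problem about $X$ to a problem about the CIR process $Y$ and the deterministic function $c$.

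Next I exhibit sign changes. For the negative side, take $\kappa\equiv0$, $c\equiv C$ a large constant, and $X(0)=-C$, so that $Y(0)=0$. Then $Y\equiv0$ is the solution, because the square-root CIR equation with zero drift started at $0$ stays at $0$ by pathwise uniqueness (Yamada–Watanabe). Consequently $X(t)=-C-C(e^t-1)=-Ce^t<0$, and $X$ is deterministic and negative; in particular $X$ never lives in $\R_{\ge0}$. For the positive side, take $c\equiv -C$ with $X(0)=C$ and $\kappa\equiv0$; then $X(t)=Ce^t>0$. These two examples already show that neither half-line is a state space uniformly over admissible parameters.

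To make the claim sharper, namely that a single model can visit both sides, I take for example $\kappa\equiv1$, $c\equiv0$ and $X(0)=0$. Then $X(t)=Y(t)+\int_0^te^{t-s}Y(s)\,ds\ge0$ on $[0,1]$. To force negativity I instead choose $c$ with $c(0)=0$ and $c$ increasing fast, for instance $c(t)=Kt$. On the event that $Y$ stays small, say $\sup_{[0,1]}Y\le\varepsilon$, which has positive probability by the support of the CIR law, we get $X(1)\le \varepsilon(1+e)-K-K\int_0^1 e^{1-s}s\,ds<0$. On the event $\{Y(1/2)\ge M\}$, which also has positive probability because the CIR marginal has unbounded support when $\kappa>0$, we get $X(1/2)\ge M-K/2-K(e^{1/2}-1-1/2)>0$ for $M$ large. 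Hence $X$ takes both signs with positive probability.

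The main obstacle is justifying these positive-probability events for $Y$: small-ball behaviour near $0$ and unbounded support of the time-inhomogeneous CIR. I would handle this by comparison with the explicit (noncentral chi-square) law of the CIR, or by a Girsanov argument on $\{Y>0\}$ combined with the Stroock–Varadhan support theorem. Finally, the condition $T\ge1$ is used only so that the interval $[0,1]$ is available; the delay term plays no role in these examples.
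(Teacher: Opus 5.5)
Your argument is correct, but it takes a different route from the paper's.

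\textbf{The paper's route.} The paper takes $\kappa\equiv0$ and $c(0)+X(0)=0$. Then $Y$ solves $dY=\sqrt{Y}\,dW$ from $0$ and is absorbed, so $Y\equiv0$. Consequently $X$ is the deterministic solution of $c(t)+X(t)-\int_{(t-1)^+}^t X(u)\,du=0$. The paper then reverse-engineers $c$ from a prescribed sign-changing path $x(t)=1-2t$ (resp.\ $-1+2t$). This gives a single trajectory going from $1$ to $-1$ (and vice versa), with no distributional facts about $Y$ needed.

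\textbf{Your constant-$c$ examples.} These do not carry the claim on their own. There $X(t)=-Ce^t$ resp.\ $Ce^t$ keeps the sign of $X(0)$, so in each of those models $X$ \emph{is} confined to a half-line.

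\textbf{Your real argument.} The genuine content is the $\kappa\equiv1$, $c(t)=Kt$, $X(0)=0$ example. Via $X=Y-c+\int_0^t e^{t-s}(Y-c)\,ds$ on $[0,1]$, it gives $P(X(1/2)>0)>0$ and $P(X(1)<0)>0$.

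\textbf{The obstacle you flag is harmless.} $4Y$ is a squared Bessel process of dimension $4$ started at $0$, hence equal in law to $|B|^2$ for a four-dimensional Brownian motion $B$. Small-ball and unbounded-support properties are therefore immediate, with no Girsanov or support theorem required.

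\textbf{Trade-offs.} Your route shows that the sign change is not an artifact of the degenerate, absorbed solution. The cost is that you need uniqueness in law and support properties of $Y$, and you get the two signs on different events rather than along one path. The same computation with $c(t)=-X(0)+Kt$ handles any initial value.

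\textbf{A shortcut your formula gives.} Your representation yields the paper-style deterministic example at once. With $\kappa\equiv0$, $X(0)=1$, $c(t)=-1+Kt$, one has $Y\equiv0$ and $X(t)=e^t-K(e^t-1)$. This is negative at $t=1$ for $K>e/(e-1)$, which is a sign change along a single path with no probabilistic input.
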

\begin{proof}
    Choose $\kappa \equiv 0$, and denote the variance term by
    $$Y(t) := a(t,X_{[0,t]}) = c(t) + X(t) - \int_{(t-1)^+}^{t} X(u) \, du.$$
    We first derive the dynamics of $Y$. Since
    \begin{equation*}
        dX(t) = \Bigl(-c'(t)+  X(t) - \Ind_{\{t>1\}}X(t-1)\Bigr) dt + \sqrt{Y(t)} \,dW(t),
    \end{equation*}
    and, by
    \begin{equation*}
        d\biggl(\int_{(t-1)^+}^{t}X(u)\,du\biggr) = \Bigl(X(t)-\Ind_{\{t>1\}}X(t-1)\Bigr)dt,
    \end{equation*}
    it follows that
    \begin{align*}
        dY(t) & = c'(t)\,dt + dX(t) - d\left(\int_{(t-1)^+}^{t}X(u)\,du\right)                  \\
              & = c'(t)\,dt +\Bigl(-c'(t)+X(t)-\Ind_{\{t>1\}}X(t-1)\Bigr)dt +\sqrt{Y(t)}\,dW(t) \\
              & \quad -\Bigl(X(t)-\Ind_{\{t>1\}}X(t-1)\Bigr)dt                                  \\
              & = \sqrt{Y(t)}\,dW(t).
    \end{align*}
    Therefore, $Y$ satisfies
    \begin{equation*}
        dY(t) = \sqrt{Y(t)} \, dW(t), \quad Y(0) = c(0) + X(0).
    \end{equation*}
    This is, up to scaling, a Bessel process of dimension $0$ started at $c(0) + X(0) \geq 0$. By pathwise uniqueness $\{0\}$ is an absorbing point \cite{revuz_Continuous_1999}, hence $Y(t) = 0$ for all $t \geq \tau=\inf\{s \geq 0: Y(s)=0\}$. In particular, the diffusion coefficient vanishes after time $\tau$, and the process $X$ evolves according to a deterministic delay equation
    \begin{equation*}
        X(t) = X(\tau) + \int_{\tau}^{t} \Bigl(-c'(s) + X(s) - \Ind_{\{s > 1\}}X(s-1)\Bigr) ds,
    \end{equation*}
    constrained by
    \begin{equation*}
        c(t) + X(t) - \int_{(t-1)^+}^{t} X(u) \, du = 0, \quad t \geq \tau.
    \end{equation*}
    Hence, if $c(0) + X(0) = 0$, $X$ is determined solely by
    \begin{equation*}
        c(t) + X(t) - \int_{(t-1)^+}^{t} X(u) \, du = 0, \quad t \geq \tau.
    \end{equation*}
    Now choose a sign-changing path $x(t) = 1-2t$ on $t \in [0,1]$ and set
    $c(t) := \int_{0}^{t} 1-2u \,du - \bigl(1-2t\bigr) = -1 +3t -t^2$ for $t \in [0,1]$ where we can extend $c$ arbitrarily to a $C^1$ function on the interval $[0,T]$. Then $c(0) =-1$ and choosing $X(0) = 1$ yields $c(0) + X(0) = 0$ with $X(1) = -1$. Thus, $X$ is not constrained to $\R_{\geq 0}$. Repeating the same construction for $x(t) = -1 + 2t$ yields $X(0) = -1$ and $X(1) = 1$, so $X$ is not constrained to $\R_{\leq 0}$.
    Consequently, the natural state constraint is not a fixed half-line for
    $X$. Instead, it is the path-dependent constraint
    \begin{equation*}
        c(t)+x(t)-\int_{(t-1)^+}^t x(u)\,du\ge0.
    \end{equation*}
\end{proof}

\newpage

\begin{proposition}\label{prop:pd-vol-nonnegative}
    Let $\kappa \in C([0,T], \R_{\geq 0})$, $c \equiv 0$ and $X(0) \in \R_{\geq 0}$.
    Let $X$ solve \eqref{eq:pd-vol-model-general}.
    Then $X(t) \geq 0$ for all $t \in [0,T]$.
    If in addition, $X(0) > 0$, then $X(t) > 0$ for all $t \in [0,T]$.
\end{proposition}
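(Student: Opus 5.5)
With $c\equiv 0$, the model reads
\begin{equation*}
dX(t) = \bigl(\kappa(t) + X(t) - \Ind_{\{t>1\}}X(t-1)\bigr)dt + \sqrt{Y(t)}\,dW(t), \qquad Y(t) := X(t) - \int_{(t-1)^+}^t X(u)\,du .
\end{equation*}
The idea is to express $X$ explicitly in terms of the variance process $Y$ and show that $X$ inherits non-negativity from $Y$.

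\textbf{Step 1: dynamics of $Y$.} Following the computation in the proof of Proposition~\ref{prop:whole-state-space}, now with $\kappa$ kept, I expect
\begin{equation*}
dY(t) = \kappa(t)\,dt + \sqrt{Y(t)}\,dW(t), \qquad Y(0)=X(0)\ge 0 .
\end{equation*}
This is the case $\lambda\equiv 0$, $r=\kappa\ge 0$ of Proposition~\ref{prop:CIR-reduction}, which gives $Y(t)\ge 0$ for all $t$ a.s.

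\textbf{Step 2: solve the delay relation for $X$.} Treat $Y$ as given and view $X(t)=Y(t)+\int_{(t-1)^+}^t X(u)\,du$ as a linear Volterra equation with a non-negative kernel $\Ind_{[t-1,t]}(u)$.

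For $t\in[0,1]$, put $I(t)=\int_0^t X$. Then $I'=Y+I$, so
\begin{equation*}
X(t) = Y(t) + \int_0^t e^{t-s}Y(s)\,ds \ \ge\ Y(t) \ \ge\ 0 .
\end{equation*}

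For general $t$, use the Picard/Neumann series. The resolvent of the kernel $k(t,u)=\Ind_{\{t-1\le u\le t\}}$ is $R=\sum_{n\ge1}k^{*n}$. It is non-negative and, being the resolvent of a bounded Volterra kernel on $[0,T]$, it converges. Hence
\begin{equation*}
X(t) = Y(t) + \int_0^t R(t,u)\,Y(u)\,du \ \ge\ Y(t)\ \ge\ 0 .
\end{equation*}
Alternatively, one can argue stepwise on the intervals $[n,n+1]$: suppose $X\ge 0$ on $[0,t]$ and let $\tau$ be the first time $X<0$. Then $X(\tau)=0$ at $\tau$. But $X(\tau)=Y(\tau)+\int X\ge \int_{(\tau-1)^+}^{\tau}X$, and this integral is $>0$ unless $X\equiv 0$ on the window. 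That last case needs care, so I prefer the resolvent argument.

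\textbf{Step 3: strict positivity.} If $X(0)>0$, then $\int_0^t R(t,u)Y(u)\,du \ge \int_{(t-1)^+}^t Y(u)\,du$, because $R\ge k$. Since $Y$ is continuous with $Y(0)>0$, this gives $X(t)>0$ for small $t$. Beyond that, I would use $X(t)\ge \int_{(t-1)^+}^t X(u)\,du$ together with a first-hitting-time argument. At $\tau=\inf\{t: X(t)=0\}$ we would need $\int_{(\tau-1)^+}^{\tau}X=0$. But $X>0$ on $[0,\tau)$ and the window $((\tau-1)^+,\tau)$ has positive length, so this integral is strictly positive, a contradiction.

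The same hitting-time argument actually also gives Step 2 directly, once $Y\ge 0$ is known.

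\textbf{Main obstacle.} The subtle point is Step 1. Invoking CIR invariance requires justifying that the square-root SDE for $Y$ (with $\sqrt{Y^+}$ if needed) stays non-negative. Here I rely on Proposition~\ref{prop:CIR-reduction}, with $r=\kappa\ge 0$ making the drift inward pointing at $0$.
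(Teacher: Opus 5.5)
Your proof is correct. Steps 1 and 3 coincide with the paper's: the paper likewise derives $dY=\kappa\,dt+\sqrt{Y}\,dW$ and appeals to non-negativity of this inhomogeneous squared Bessel process. It also proves strict positivity by the same first-hitting-time contradiction at $\tau=\inf\{t: X(t)=0\}$.

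Step 2 follows a different route. The paper never solves the delay relation. From $Y\ge 0$ it uses only the inequality $X(t)\ge\int_{(t-1)^+}^t X(s)\,ds$, which gives $X^-(t)\le\int_{(t-1)^+}^t X^-(s)\,ds\le F(t):=\int_0^t X^-(s)\,ds$. Gronwall applied to $F'\le F$, $F(0)=0$ then yields $X^-\equiv 0$. You instead write $X=Y+\int_0^t R(t,u)Y(u)\,du$ with the non-negative resolvent of $k(t,u)=\Ind_{\{(t-1)^+\le u\le t\}}$. This needs two standard facts: convergence of the Neumann series (e.g.\ via $0\le k^{*n}(t,u)\le (t-u)^{n-1}/(n-1)!$), and uniqueness of bounded solutions of $X=Y+kX$ to identify $X$ with the series, which is itself a Gronwall step.

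The paper's argument is shorter and needs only a one-sided inequality. Yours gives more structure: it exhibits $X$ as an explicit, order-preserving linear image of $Y$. This is the same deterministic solution map the paper later uses for uniqueness in law in Proposition~\ref{prop:pd-vol-uniqueness-hidden-variance}.

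One correction: your closing remark that the hitting-time argument alone also yields Step 2 is wrong when $X(0)=0$. At the first time $X$ turns negative, the relation only forces $Y=0$ there and $X\equiv 0$ on the preceding window. That is consistent, not contradictory; it is exactly the case you flagged earlier. Excluding negative values afterwards needs a Gronwall-type estimate like the paper's, or your resolvent. Only for $X(0)>0$ does the hitting-time argument suffice on its own, since strict positivity then implies non-negativity.
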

\begin{proof}
    Define
    \begin{equation*}
        Y(t) := a(t,X_{[0,t]}) = X(t) - \int_{(t-1)^+}^t X(s) \, ds, \quad t \in [0,T].
    \end{equation*}
    As in Proposition~\ref{prop:whole-state-space} we see that $Y$ satisfies for all $t \in [0,T]$,
    \begin{equation}\label{eq:Y-dynamics}
        dY(t) = \kappa(t)dt + \sqrt{Y(t)}\,dW(t), \quad Y(0) = X(0) \geq 0.
    \end{equation}

    Since $Y$ is an inhomogeneous Bessel process started from a non-negative initial value and $t \mapsto \kappa(t) \in \R_{\geq 0}$, it remains non-negative at all times.
    We now show that $X$ itself is non-negative. Let $X^-(t) = \max{-X(t),0}$ denote the negative part of $X$. Since $Y(t) \geq 0$ for all $t \in [0,T]$, we obtain by definition of $Y$
    \begin{equation*}
        X(t) \geq \int_{(t-1)^+}^{t} X(s) \,ds.
    \end{equation*}
    This implies in particular for $F(t) := \int_{0}^{t} X^-(s) \,ds$
    \begin{equation*}
        X^-(t) \leq \int_{(t-1)^+}^{t} X^-(s) \,ds \leq F(t).
    \end{equation*}
    Then $F$ is absolute continuous, $F(0) = 0$ and for almost every $t \in [0,T]$, $F'(t) = X^-(t) \leq F(t)$. Gronwall's inequality now implies that $F(t) = 0$ for all $t \in [0,T]$ and in particular that $X^-(t) = 0$ for all $t \in [0,T]$.

    Finally, assume that $X(0)>0$. Suppose, for a contradiction, that $X$ hits zero, and define
    \begin{equation*}
        \tau:=\inf\{t>0:X(t)=0\}.
    \end{equation*}
    By continuity and the already established non-negativity of $X$, we have
    $X(t)>0$ for all $t<\tau$. At time $\tau$,
    \begin{equation*}
        0=X(\tau) = Y(\tau)+\int_{(\tau-1)^+}^{\tau} X(s)\,ds.
    \end{equation*}
    Both terms on the right-hand side are non-negative. Hence,
    \begin{equation*}
        Y(\tau)=0, \quad \int_{(\tau-1)^+}^{\tau} X(s)\,ds=0.
    \end{equation*}
    Since $X$ is continuous and non-negative, this implies $X(s) =0$ for all $s\in[(\tau-1)^+,\tau]$.
    This contradicts the definition of $\tau$.
    Therefore, $X(t)> 0$ for all $t \in [0,T]$.
\end{proof}

\begin{remark}
    Let $\kappa : C([0,T], \R_{\leq 0})$ and $X(0) \leq 0$. Then Proposition~\ref{prop:pd-vol-nonnegative} also applies to the model
    \begin{equation*}
        X(t) = X(0) + \int_{0}^{t} \kappa(s) + X(s) - \Ind_{\{s > 1\}}X(s-1) \, ds + \int_{0}^{t} \sqrt{\int_{(s-1)^+}^{s} X(u) \, du - X(s)} \, dW(s), \quad t \in [0,T],
    \end{equation*}
    where $X$ now takes values in $\R_{\leq 0}$ and it never reaches zero whenever $\kappa : [0,T] \to \R_{< 0}$ and $X(0)< 0$.
\end{remark}

We now turn to the question of existence of solutions and to the calculation of the Fourier-Laplace transform of the model given by
\eqref{eq:pd-vol-model-general}.
Notice first that the coefficients $b$ and $\sigma = \sqrt{a}$ given by \eqref{parameters:pd-vol} satisfy the linear growth condition \ref{LG-Condition} and the existence of a weak solution is guaranteed by Theorem~\ref{thm:weak-existence}.

\begin{proposition}\label{prop:pd-vol-uniqueness-hidden-variance}
    Assume that $c\in C^1([0,T])$, $\kappa \in C([0,T], \R_{\geq 0})$, and that
    \begin{equation*}
        V(0) := c(0)+X(0) \geq 0.
    \end{equation*}
    Then the weak solution to \eqref{eq:pd-vol-model-general} is unique in law.
\end{proposition}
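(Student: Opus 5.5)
The approach is to introduce the ``hidden'' variance process
\[
V(t) := c(t) + X(t) - \int_{(t-1)^+}^{t} X(u)\,du ,
\]
show that the pair (driving noise, $V$) is determined in law by a uniquely solvable time-inhomogeneous CIR equation, and then recover $X$ from $V$ by a deterministic, pathwise solution map.

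\textbf{Step 1: the equation for $V$.} Let $(X,W)$ be any weak solution of \eqref{eq:pd-vol-model-general} on some filtered probability space. Since $X$ is continuous, $t\mapsto\int_{(t-1)^+}^t X(u)\,du$ is $C^1$, with derivative $X(t)-\Ind_{\{t>1\}}X(t-1)$. Repeating the computation in the proof of Proposition~\ref{prop:whole-state-space}, but now keeping $\kappa$, gives
\[
dV(t) = \kappa(t)\,dt + \sqrt{V(t)}\,dW(t), \qquad V(0)=c(0)+X(0)\ge 0 .
\]
The square root is well defined along the solution, because the diffusion coefficient in \eqref{eq:pd-vol-model-general} is the square root of exactly this quantity. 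This is a one-dimensional SDE with continuous drift $\kappa$ and diffusion coefficient $\sqrt{v}$, which is H\"older-$1/2$. The Yamada--Watanabe pathwise uniqueness criterion therefore applies. Combined with weak existence (Theorem~\ref{thm:weak-existence}, or standard CIR theory), Yamada--Watanabe also gives uniqueness in law of $(V,W)$. Its law is the law of the pair built from an arbitrary Brownian motion together with the unique strong solution $V=F(W)$ for a measurable map $F$.

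\textbf{Step 2: recovering $X$ from $V$.} Given the path $V$, the function $X$ solves the deterministic linear Volterra equation
\[
X(t) = V(t) - c(t) + \int_{(t-1)^+}^{t} X(u)\,du ,
\]
whose kernel is bounded. I would show by a Picard/Gronwall argument, as in Proposition~\ref{prop:pd-vol-nonnegative}, that for each continuous $v$ this equation has a unique continuous solution $X = G(v)$. The map $G\colon C([0,T])\to C([0,T])$ is continuous, since the equation is linear with a Lipschitz integral operator: the difference of two solutions $e$ satisfies $|e(t)|\le \|v-v'\|_\infty + \int_0^t|e|$, and Gronwall closes the estimate. Hence every weak solution has the form $X=G(F(W))$ almost surely, so its law is the image of Wiener measure under $G\circ F$. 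This image does not depend on the particular weak solution, which proves uniqueness in law.

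\textbf{Main obstacle.} The delicate step is Step 1, namely justifying that the $V$ built from an arbitrary weak solution really satisfies the CIR equation with the same Brownian motion, and that the square root there is $\sqrt{V}$ rather than $\sqrt{V^+}$. The first point is immediate, because $dX$ is driven by the given $W$. For the second, note that a weak solution presupposes $V\ge 0$ for the equation to make sense; alternatively, one can argue with $\sqrt{V^+}$ and use Proposition~\ref{prop:CIR-reduction}, since $\kappa\ge 0$, to conclude $V\ge0$. The step from pathwise uniqueness of $V$ to uniqueness in law of the whole process then goes through the strong-solution map $F$, which is standard Yamada--Watanabe, and measurability of the composition $G\circ F$ is clear.
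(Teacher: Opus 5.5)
Your proposal is correct and follows essentially the paper's route. You derive $dV=\kappa\,dt+\sqrt{V}\,dW$ for the hidden variance, use uniqueness in law of this square-root SDE (you justify it via Yamada--Watanabe, the paper simply cites it), and recover $X$ from $V-c$ through a deterministic continuous solution map.

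The remaining differences are minor:
\begin{enumerate}
\item The paper solves the delay ODE for $\int_0^{\cdot}X(u)\,du$ by the method of steps rather than by your Gronwall argument.
\item The paper only needs the law of $V$, so your detour through the strong solution map $V=F(W)$ is unnecessary.
\item The map $t\mapsto\int_{(t-1)^+}^t X(u)\,du$ is only absolutely continuous, not $C^1$, since its derivative jumps at $t=1$ when $X(0)\neq 0$. Absolute continuity is all your computation requires.
\end{enumerate}
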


\begin{proof}
    Let $X$ be a weak solution of \eqref{eq:pd-vol-model-general} and define
    \begin{equation*}
        V(t) := c(t) + X(t) - \int_{(t-1)^+}^{t}X(u) \,du, \quad t \in [0,T].
    \end{equation*}
    By construction, $V(t)\geq0$, since $V(t)$ is the expression appearing
    inside the square root.
    As in Proposition~\ref{prop:whole-state-space} $V$ is a weak solution of
    \begin{equation*}
        dV(t) = \kappa(t)\,dt + \sqrt{V(t)} \,dW(t), \quad V(0) = c(0) + X(0).
    \end{equation*}
    This one-dimensional square-root SDE has uniqueness in law \cite{revuz_Continuous_1999}. Therefore, the
    law of $V$ on $C([0,T],\R_+)$ is uniquely determined.

    It remains to see that $X$ is uniquely determined by $V$. Set
    \begin{equation*}
        y(t) := V(t)-c(t).
    \end{equation*}
    Then $X$ satisfies the deterministic equation
    \begin{equation*}
        X(t) = y(t)+\int_{(t-1)^+}^{t}X(u) \,du
    \end{equation*}
    and every continuous $y$, the deterministic equation has a unique continuous solution. Indeed, setting
    \begin{equation*}
        F(t) := \int_0^t X(u)\,du,
    \end{equation*}
    the equation is equivalent to the delay differential equation
    \begin{equation*}
        F'(t) = y(t)+F(t)-F((t-1)^+), \quad F(0)=0.
    \end{equation*}
    This linear equation can be solved uniquely by the standard method of steps:
    on each interval $[n,n+1]$, the delayed term $F(t-1)$ is already known
    from the previous step, so one obtains a linear inhomogeneous ODE with given
    initial value $F(n)$. Hence $F$, and therefore
    \begin{equation*}
        X(t) = F'(t) = y(t)+F(t)-F((t-1)^+),
    \end{equation*}
    is uniquely determined by $y$.

    Therefore, there exists a continuous deterministic map $\cR$ such that
    \begin{equation*}
        X = \cR(V-c).
    \end{equation*}
    Consequently, if two weak solutions $X^1,X^2$ give rise to variance
    processes $V^1,V^2$ with the same law, then
    \begin{equation*}
        X^1 = \cR(V^1-c), \quad X^2 = \cR(V^2-c),
    \end{equation*}
    and therefore $X^1$ and $X^2$ have the same law.
    Hence, weak uniqueness holds for \eqref{eq:pd-vol-model-general}.
\end{proof}

With \eqref{parameters:pd-vol} we obtain the Riccati equation \eqref{riccati-ode} for any $\mu \in \ccM([0,T],\C)$ by
\begin{align}\label{eq:riccati-pd-vol}
    \begin{split}
        \psi(t) & = \mu([t,T]) + \int_{t}^{T} \psi(s) b^1(s,[t,s]) + \frac{1}{2} \psi(s)^2 a^1(s,[t,s]) \, ds                                             \\
                & = \mu([t,T]) + \int_{t}^{T} \psi(s) \Ind_{\{s < t+1\}}+ \frac{1}{2} \psi(s)^2 \bigl(1-s+\max((s-1)^+,t)\bigr) \, ds, \quad t \in [0,T],
    \end{split}
\end{align}
and
\begin{equation*}
    \phi(t) = \int_{t}^{T} \psi(s)\bigl(\kappa(s) - c'(s)\bigr) + \frac{1}{2}\psi(s)^2c(s)\,ds.
\end{equation*}
Then the $\C$-valued process $Y = (Y(t))_{t\in[0,T]}$ is given by
\begin{align}\label{eq:Y-pd-vol}
    \begin{split}
        Y(t) & = \phi(t) + \psi(0)X(0) +\int_0^t \psi(s) dX(s)                                                                                                                \\
             & \quad - \int_0^t \Biggl( \psi(s) \bigl(X(s)-\Ind_{\{s > 1\}}X(s-1)\bigr) + \frac{1}{2} \psi(s)^2 \biggl(X(s)-\int_{(s-1)^+}^{s} X(u) \, du \biggr) \Biggr) ds.
    \end{split}
\end{align}

\begin{proposition}\label{prop:global-sol-ricc-pd-vol}
    Let $\mu \in \ccM([0,T], \C)$ satisfy $\re(\mu([t,T])) \leq 0$ for all $t \in [0,T]$.
    Then the Riccati equation \eqref{eq:riccati-pd-vol} has a unique global solution $\psi \in L^2([0,T], \C)$, and this solution satisfies $\re(\psi(t))\le0$ for all $t\in[0,T]$ and in particular $\psi \in L^\infty([0,T],\C)$.
\end{proposition}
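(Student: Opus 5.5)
The plan is to treat \eqref{eq:riccati-pd-vol} as a backward Volterra equation with convolution-type kernels. First I would simplify the kernels. For $t\le s$ one checks $b^1(s,[t,s])=\Ind_{\{s-t<1\}}=:k_1(s-t)$. One also checks $a^1(s,[t,s])=1-s+\max((s-1)^+,t)=(1-(s-t))^+=:k_2(s-t)$. Both kernels are bounded, take values in $[0,1]$, and vanish for $s-t\ge1$. The equation therefore reads
\begin{equation*}
\psi(t)=f(t)+\int_t^T \Bigl(k_1(s-t)\psi(s)+\tfrac12 k_2(s-t)\psi(s)^2\Bigr)ds,\qquad f(t):=\mu([t,T]),
\end{equation*}
where $f$ is c\`agl\`ad and bounded with $\re f\le 0$.

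\emph{Local existence and uniqueness.} The nonlinearity $z\mapsto k_1z+\tfrac12k_2z^2$ is locally Lipschitz, uniformly in the kernel arguments. A Picard/contraction argument in $L^\infty([T-\delta,T],\C)$ therefore gives a unique solution near $T$. The step size $\delta$ depends only on a bound for $\|f\|_\infty$ and on the size of the ball. For continuation I would fix a solution on $[t_0,T]$ and move the already-known part $\int_{t_0}^T\cdots$ into a new forcing term, which is bounded in $t$. Applying the contraction again extends the solution below $t_0$. This yields a maximal solution on some interval $(t_*,T]$ with the alternative: either $t_*<0$ (a global solution), or $\limsup_{t\downarrow t_*}|\psi(t)|=\infty$. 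Uniqueness of $L^2$ solutions would follow by taking the difference of two solutions. The quadratic terms factor as $(\psi_1+\psi_2)(\psi_1-\psi_2)$. On any interval where both solutions are bounded, a backward Gronwall inequality then gives $\psi_1=\psi_2$. I would also check that every $L^2$ solution is automatically bounded: since $k_2\le1$, a crude bound puts the integral term in $L^\infty$, as in the c\`agl\`ad remark after Theorem~\ref{main_thm}.

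\emph{A priori bound.} Blow-up is excluded once $\re\psi\le0$ is known, together with a bound on $|\psi|$. Write $\psi=w+iv$. The imaginary part satisfies
\begin{equation*}
v(t)=\Im f(t)+\int_t^T\bigl(k_1 v+k_2 wv\bigr)ds.
\end{equation*}
When $w\le0$ this gives $|v(t)|\le\|f\|_\infty+\int_t^T|v|\,ds$, so Gronwall yields $|v|\le\|f\|_\infty e^{T}$. Similarly,
\begin{equation*}
w=\re f+\int_t^T\Bigl(k_1w+\tfrac12k_2(w^2-v^2)\Bigr)ds,
\end{equation*}
and together with $w\le0$ this bounds $|w|$ from below by an explicit constant. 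Concretely, I would argue via $-w\le \|f\|_\infty+\int_t^T(|w|+\tfrac12 v^2)\,ds$ followed by Gronwall. These bounds give a uniform $L^\infty$ estimate on $(t_*,T]$ and hence global existence.

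\emph{The main obstacle} is the sign statement $\re\psi\le0$. In the equation for $w$, the term $\tfrac12k_2w^2$ has the wrong sign and the kernels depend on $t$, so the naive comparison fails. My first attempt is a first-crossing argument. Suppose $w>0$ somewhere, and let $\tau:=\sup\{t:w(t)>0\}$. I would compare the equation at $t$ slightly below $\tau$ with the equation at $\tau$. The difference $w(t)-w(\tau)$ consists of the increment $\re\mu([t,\tau))$ and integrals over $[t,\tau]$, whose length tends to zero. It also contains boundary contributions from the kernel shift on $[t+1,\tau+1]$. I would aim to show that these terms cannot push $w$ above zero, using $\re f\le 0$ and the monotonicity of $k_1,k_2$ in $s-t$. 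If this purely analytic route does not close, I would fall back on a probabilistic argument. Take the model of Proposition~\ref{prop:pd-vol-nonnegative} with $c\equiv0$, whose state space is $\R_{\ge0}$. On the maximal interval, Theorem~\ref{main_thm} identifies $\exp(Y)$ with a conditional Fourier--Laplace transform. Since $\re(\mu\cdot X)\le0$, the quantity $\re\bigl(\phi(t)+\Psi(t,T,\mu)\cdot X_{[0,t]}\bigr)$ is nonpositive, and I would read off $\re\psi(t)\le0$ from the atom of $\Psi(t,T,\mu)$ at $t$, in the spirit of Lemma~\ref{lm:semi-flow}(i). The delicate points there are justifying the true martingale property via boundedness of the stopped process and using enough support of $X$ to isolate the atom.
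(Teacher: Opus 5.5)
Your kernel simplification and the local existence, continuation and uniqueness part are fine. The step you call the main obstacle cannot be closed, however, because under the stated hypothesis $\Real\psi\le0$ is false.

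\textbf{Counterexample.} Take $0<\epsilon<\min(T,1)$ small, $A>2$, and $\mu=A(\delta_{T-\epsilon}-\delta_T)$. Then $\mu([t,T])=-A\,\mathbf 1_{(T-\epsilon,T]}(t)\le0$, so the hypothesis holds. On $(T-\epsilon,T]$ the equation reads $\psi(t)=-A+\int_t^T\bigl(\psi+\tfrac12(1-(s-t))\psi^2\bigr)\,ds$. Comparison with $\dot\phi=\phi+\tfrac12\phi^2$ in reversed time gives $-A\le\psi<-2$ there, with $\psi\approx-A$ for small $\epsilon$. At $t=T-\epsilon$ the forcing vanishes, so
\[
\psi(T-\epsilon)=\int_{T-\epsilon}^T\Bigl(\psi(s)+\tfrac12\bigl(1-(s-T+\epsilon)\bigr)\psi(s)^2\Bigr)\,ds\approx\epsilon\bigl(\tfrac12A^2-A\bigr)>0 .
\]
For instance, $\psi(0.99)\approx0.38$ when $T=1$, $A=10$, $\epsilon=10^{-2}$. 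By continuity of the integral term, $\psi>0$ on a whole left neighbourhood of $T-\epsilon$. Letting $A$ grow with $\epsilon$ fixed makes $\psi(T-\epsilon)\ge A/2$. The quadratic term then pushes $\psi$ out of $L^2$ within time $O(1/A)$, so global existence fails as well.

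\textbf{Where your two routes break.} Both fail exactly on this example.
\begin{itemize}
\item The first-crossing argument needs a sign on the increments $\Real\mu([t,\tau))$ of $\Real f$, not on $\Real f$ itself. In the example it is the atom $+A$ at $T-\epsilon$ that lifts $w$ above $0$.
\item The probabilistic fallback needs $\Real(\mu\cdot X)\le0$ for $X\ge0$. This holds for all nonnegative paths iff $\Real\mu$ is a nonpositive measure. The condition $\Real\mu([t,T])\le0$ only controls nonnegative nondecreasing paths (integrate by parts), and here $\mu\cdot X=A(X(T-\epsilon)-X(T))$.
\end{itemize}
The paper's own proof breaks at the same point. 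Its appeal to the comparison theorem of Abi Jaber et al.\ (with $K\equiv\mathrm{Id}$) cannot yield $\psi^r\le0$ from the pointwise condition alone, as the example shows. The hypothesis such arguments actually use is that $\Real\mu$ be a nonpositive measure; this is the condition imposed in the later Heston martingale proposition. Under it, the premise of your probabilistic route becomes correct.

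\textbf{A second, independent gap: the bound on $v=\operatorname{Im}\psi$.} Your a priori bound does not follow as written. From $v(t)=\operatorname{Im}f(t)+\int_t^T(k_1+k_2w)v\,ds$, the triangle inequality only gives
\[
|v(t)|\le\|f\|_\infty+\int_t^T|k_1+k_2w|\,|v|\,ds .
\]
The condition $w\le0$ bounds $k_1+k_2w$ above by $1$ but not below, so you cannot replace $|k_1+k_2w|$ by $1$. You need an argument that treats $k_2w$ as a damping term, namely a comparison result for the linear Volterra equation. This is what the auxiliary functions $h$ and $\ell$ provide, in the paper and in Abi Jaber et al.'s Lemma~6.3. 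Without it, your $v$- and $w$-estimates are circular. They amount to $|\psi|\le\|f\|_\infty+\int(|\psi|+\tfrac12|\psi|^2)$, which does not rule out blow-up.
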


\begin{proof}
    We proceed as in the proof of \cite[Lemma 6.3]{abijaber_Affine_2019}.
    First we rewrite equation \eqref{eq:riccati-pd-vol} by
    \begin{equation*}
        \psi(t) = \mu([t,T]) + \int_{t}^{T} \psi(s) k(t,s) + \frac{1}{2} \psi(s)^2 q(t,s) \, ds, \quad t \in [0,T],
    \end{equation*}
    with $q(t,s) = 1-s+\max((s-1)^+,t) = (1+t-s)^+$ and $k(t,s) = \Ind_{\{s < t+1\}}$. Therefore, $0 \leq q(t,s) \leq 1$ for all $t,s \in [0,T]$ with $s \geq t$.
    Now by Theorem \ref{thm:existence-of-noncontinuable-riccati} there exists a unique noncontinuable solution $(\psi,T_{\text{min}})$ to equation
    \eqref{eq:riccati-pd-vol}. Let $\psi^r$ and $\psi^i$ denote the real and imaginary parts of $\psi$. On $t \in (T_{\text{min}},T]$ they satisfy the equations
    \begin{align*}
        \psi^r(t) & = \re \mu([t,T]) + \int_{t}^{T} \psi^r(s)k(t,s) + \frac{1}{2} q(t,s) \bigl(\psi^r(s)^2-\psi^i(s)^2\bigr) \,ds, \\
        \psi^i(t) & = \im \mu([t,T]) + \int_{t}^{T} \psi^i(s)\bigl(k(t,s) + q(t,s)\psi^r(s)\bigr) \, ds.
    \end{align*}
    Moreover, on the interval $(T_{\text{min}},T]$, $-\psi^r$ satisfies the linear equation
    \begin{equation*}
        \chi(t) = - \re \mu([t,T]) + \int_{t}^{T} \chi(s) k(t,s) + \frac{1}{2}q(t,s) \bigl(\psi^i(s)^2 + \chi(s)\psi^r(s)\bigr) \, ds.
    \end{equation*}
    Since $q$ and $k$ are non-negative and also by assumption $\re \mu([t,T])$ is non-positive for all $t \in [0,T]$, \cite[Theorem C.2]{abijaber_Affine_2019} with $K \equiv \Id$ yields $\psi^r \leq 0$. Now we conclude in the same manner as \cite[Lemma 6.3]{abijaber_Affine_2019}, where the equations for $h$ and $\ell$ become
    \begin{align*}
        h(t)    & = |\im \mu([t,T])| + \int_{t}^{T} k(t,s)h(s) \,ds,                             \\
        \ell(t) & = -\re \mu([t,T]) + \int_{t}^{T} k(t,s)\ell(s) + \frac{1}{2}q(t,s)h(s)^2 \,ds.
    \end{align*}
    By \citet[Corollary B.3]{abijaber_Affine_2019} with $K \equiv \Id$ there exists unique global solutions $h,l \in L^2([0,T], \R)$. Moreover, these solutions are in fact bounded and thus $h, \ell \in L^\infty([0,T])$. Now as in the proof of \cite[Lemma 6.3]{abijaber_Affine_2019} one shows that $-l \leq \psi^r \leq 0$ and $|\psi^i| \leq h$. This implies that there exists a unique global solution $\psi \in L^2([0,T],\C)$ of \eqref{eq:riccati-pd-vol} which satisfies $\re \psi \leq 0$. In particular, $\psi \in L^\infty([0,T],\C)$.
\end{proof}

\begin{proposition}\label{prop:pd-vol-exp-affine-formula}
    Let $\mu \in \ccM([0,T], \R)$ satisfy $\mu([t,T]) \leq 0$ for all $t \in [0,T]$ and let $\psi \in L^2([0,T], \R)$ denote the unique global solution of \eqref{eq:riccati-pd-vol}. Then $\exp(Y)$ with $Y$ defined by \eqref{eq:Y-pd-vol} is a true martingale. In particular, the exponential-affine transform formula \eqref{exp-affine-formula} holds.
\end{proposition}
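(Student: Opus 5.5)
We prove that $\exp(Y)$ is a true martingale. Since $\mu$ and $\psi$ are real, $\exp(Y)$ is a positive local martingale, hence a supermartingale. It therefore suffices to show $E[\exp(Y(T))]=\exp(Y(0))$, or alternatively to show directly that $\exp(Y)$ is bounded.

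\textbf{Boundedness route.} By the representation \eqref{Y-tilde} from Theorem~\ref{main_thm}, $Y(t)$ is an affine functional of $X_{[0,t]}$. The natural move is to rewrite this functional in terms of the variance process
\[
V(t)=c(t)+X(t)-\int_{(t-1)^+}^t X(u)\,du \geq 0,
\]
and, as in the proof of Proposition~\ref{prop:global-sol-ricc-pd-vol}, of a nonnegative part of the path. If $Y(t)$ could be bounded above by a deterministic constant, then $\exp(Y)$ would be a bounded local martingale and hence a true martingale. This route is uncertain, because $X$ itself takes both signs (Proposition~\ref{prop:whole-state-space}).

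\textbf{Localization route (main plan).} From the proof of Theorem~\ref{main_thm},
\[
Y(t)=Y(0)+\int_0^t \psi(s)\sqrt{V(s)}\,dW(s)-\frac12\int_0^t \psi(s)^2 V(s)\,ds .
\]
Since $\psi\in L^\infty$ by Proposition~\ref{prop:global-sol-ricc-pd-vol}, $\exp(Y)$ is the stochastic exponential of $\int \psi\sqrt{V}\,dW$. Here $V$ is a time-inhomogeneous CIR-type process,
\[
dV=\kappa\,dt+\sqrt{V}\,dW,
\]
exactly as derived in Proposition~\ref{prop:pd-vol-uniqueness-hidden-variance}. The argument then proceeds in four steps.
\begin{enumerate}
\item Localize with $\tau_n=\inf\{t: V(t)\ge n\}\wedge T$, so that $E[\exp(Y(\tau_n))]=\exp(Y(0))$.
\item Let $Q^n$ be the measure with density $\exp(Y(\tau_n)-Y(0))$. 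By Girsanov's theorem, under $Q^n$ the process $V$ solves, up to $\tau_n$,
\[
dV=\bigl(\kappa+\psi V\bigr)dt+\sqrt{V}\,d\widetilde W .
\]
This is again a CIR equation with bounded coefficients.
\item Use linear growth and Gronwall's inequality, or the moment bounds of Lemma~\ref{lm:estimate-existence}, to bound $E^{Q^n}[\sup_t V(t)]$ uniformly in $n$. Markov's inequality then gives $Q^n(\tau_n<T)\to0$.
\item Conclude
\[
E[\exp(Y(T))]\geq E\bigl[\exp(Y(\tau_n))\Ind_{\{\tau_n=T\}}\bigr]=\exp(Y(0))\,Q^n(\tau_n=T)\to\exp(Y(0)).
\]
Together with the supermartingale property this gives the martingale property. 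This is the standard argument from \cite[Lemma 7.3]{abijaber_Affine_2019}.
\end{enumerate}

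Once $\exp(Y)$ is a true martingale, the transform formula \eqref{exp-affine-formula} follows immediately from Theorem~\ref{main_thm}.

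The main obstacle is step 3: obtaining moment bounds for $V$ under the changed measure that are uniform in $n$. Since $\psi$ is bounded, the added drift $\psi V$ is linear in $V$, so Gronwall's inequality on $E^{Q^n}[V(t\wedge\tau_n)]$ should suffice. An alternative is to use $\re\psi\le0$ (here $\psi\le0$), so that the added drift only pushes $V$ downward. It must be checked, though, that $\psi$ need not be c\`agl\`ad-continuous for Girsanov's theorem to apply; boundedness and measurability of $\psi$ should suffice.
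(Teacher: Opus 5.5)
Your proposal is correct and follows the same scheme as the paper. Positivity gives the supermartingale property, Novikov applies to the localized exponential, Girsanov yields $Q^n$, a moment bound uniform in $n$ gives $Q^n(\tau_n<T)\to 0$, and $E[\exp(Y(T))]\geq \exp(Y(0))\,Q^n(\tau_n=T)$.

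The only difference is which process you localize and control. You use the autonomous variance process $V$, where $\psi\leq 0$ gives $E^{Q^n}[V(\tau_n)]\leq V(0)+\int_0^T\kappa(s)\,ds$ in one line. The paper instead localizes on $|X|$ and applies the path-dependent moment bound of Lemma~\ref{lm:estimate-existence} to $X$ with drift $b+\psi a$, which is less model-specific since it needs only boundedness of $\psi$ and linear growth.
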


\begin{proof}
    Note first that by Theorem~\ref{thm:existence-of-noncontinuable-riccati} that the unique global solution to $\psi$ is real-valued since $\mu$ is a real-valued measure.
    Note further that the process $Y$ has the representation
    \begin{align}
        \begin{split}
            Y(t) & = Y(0) + \int_0^t \psi(s) \sigma(s,X_{[0,s]}) \, dW(s) - \frac{1}{2} \int_0^t  \psi(s)^2 a(s,X_{[0,s]}) \, ds, \quad t \in [0,T], \\
            Y(0) & = \phi(0) + \psi(0)X(0),
        \end{split}
    \end{align}
    and by Theorem~\ref{main_thm} $\exp(Y)$ is a local martingale.
    Further, $\exp(Y(t)) = \exp(Y(0))\exp(U(t) - \frac{1}{2}\langle U \rangle(t))$ for $U(t) := \int_0^t \psi(s) \sigma(s,X_{[0,s]}) \, dW(s)$.
    Thus, it remains to show that the process $M(t) := \exp(U(t) - \frac{1}{2} \langle U \rangle(t))$ is a martingale since $Y(0)$ is deterministic. As $M$ is a non-negative local martingale, it is a supermartingale by Fatou's lemma, and it suffices to show that $E[M_T] \geq 1$ for any $T \in \R_{\geq 0}$. Define the stopping times $\tau_n = \inf\{t \geq 0 : |X(t)| > n\} \wedge T$. Then the stopped process $M^{\tau_n}$ is a uniformly integrable martingale for each $n$ by Novikov's condition, and we may define probability measures $Q^n$ by
    \begin{equation*}
        \frac{dQ^n}{dP} = M_{\tau_n}.
    \end{equation*}
    By Girsanov's theorem, the process $dW^n(t) = dW(t) - \Ind_{\{t \leq \tau_n\}} \psi(t)\sigma(t,X_{[0,t]})dt$ is a Brownian motion under $Q^n$, and we have for $t \leq \tau_n$
    \begin{equation*}
        X(t) = X(0) + \int_{0}^{t} \biggl( b(s,X_{[0,s]}) + \psi(s)a(s,X_{[0,s]}) \biggr) ds + \int_{0}^{t} \sigma(s,X_{[0,s]}) dW^n(s).
    \end{equation*}
    Note that under $Q^n$, $X$ satisfies a path-dependent SDE with Brownian motion $W^n$ and drift $b^n(t,x) := b(t,x) + \psi(t)a(t,x)\Ind_{\{\sup_{0 \leq u \leq t} |x(u)| < n\}}$. Observe now that, since $\psi$ is bounded that $b^n(t,x)$ satisfies a linear growth condition \ref{LG-Condition} in $x$, uniformly in $t$.
    Let $p \geq  2$. By Lemma~\ref{lm:estimate-existence}, we have the moment bound
    \begin{equation*}
        \sup_{t \leq T} E_{Q^n}[\|X(t)\|^p] \leq C
    \end{equation*}
    for some constant $C$ that does not depend on $n$. Let $|\cdot|_{C^\alpha}$ denote the $\alpha$-Hölder seminorm, we then get
    \begin{align*}
        Q^n(\tau_n < T) & \leq Q^n \Bigl(\sup_{t \leq T} |X(t)| > n\Bigr)                         \\
                        & \leq Q^n(|X(0)| + |X|_{C^0} > n)                                        \\
                        & \leq \left(\frac{1}{n-|X(0)|}\right)^p E_{Q^n}\left[|X|_{C^0}^p \right] \\
                        & \leq \left(\frac{1}{n-|X(0)|}\right)^p C'
    \end{align*}
    for a constant $C'$ that does not depend on $n$, using Lemma \ref{lm:hoelder-continuity} with $\alpha = 0$ for the last inequality. We deduce that
    \begin{equation*}
        E_{P}[M_T] \geq E_{P}[M_T \Ind_{\{\tau_n = T\}}] = Q^n(\tau_n = T) \geq 1 - \left(\frac{1}{n-|X(0)|}\right)^p C',
    \end{equation*}
    and sending $n$ to infinity yields $E_{P}[M_T] \geq 1$, which shows that the process $M$ is a true martingale. Consequently, $\exp(Y)$ is also a true martingale and Theorem $\ref{main_thm}$ applies and the exponential-affine transform formula holds.
\end{proof}

\subsection{Path-dependent inhomogeneous Heston model}\label{subsec:path-dep-heston-model}
To introduce the path-dependent Heston model, let $d = 2$, $\rho \in [-1,1]$ and let $\kappa, \theta, \overline{\sigma}, \eta : [0,T] \to [0,\infty)$ be continuous such that $\overline{\sigma}$ is strictly positive. Consider for initial values $S(0) \in (0,\infty), V(0) \in [0,\infty)$ the bivariate process
\begin{align}\label{eq:path-dep-inhom-heston-spot-price}
    S(t) & = S(0) + \int_{0}^{t} S(r) \eta(r) \sqrt{V(r)}\bigl(\sqrt{1-\rho^2}dW_1(r) + \rho dW_2(r)\bigr),                                                                                       \\
    \label{eq:path-dep-inhom-heston-vol}
    V(t) & = V(0) + \int_{0}^{t} \Bigl(\kappa(s)(\theta(s) - V(s)) + \int_{[0,s]} V(u) \, \gamma(s,du)\Bigr) \, ds + \int_{0}^{t} \overline{\sigma}(s) \sqrt{V(s)} \, dW_2(s), \quad t \in [0,T],
\end{align}
where $W = (W_1, W_2)^\top$ is a two-dimensional Brownian motion and a non-negative measure $\gamma : [0,T] \to \ccM([0,T], \R_{\geq 0})$ such that
\begin{equation*}
    \int_{0}^{T} \|\gamma(s)\|_{TV} \, ds < \infty.
\end{equation*}
Structurally, the model remains within the class of path-dependent affine processes introduced in Section~\ref{sec:reverse-direction}, since the drift term is an affine functional of the form
\begin{equation*}
    b(s,V_{[0,s]}) = b^0(s) + \int_{[0,s]} V(u) \, b^1(s,du), \quad s \in [0,T],
\end{equation*}
with $b^0(s) = \kappa(s)\theta(s)$ and $b^1(s,du) = -\kappa(s)\delta_s(du) + \gamma(s,du)$.

The path-dependent affine framework allows even signed measures for $\gamma$.
However, since we have to guarantee that the process $V$ stays non-negative at all times we must ensure that the drift remains non-negative at the boundary points of the process. Similar for the variance term of $V$ which is not path-dependent and only depends on the current state of the process.
The additional term
\begin{equation*}
    \int_{[0,s]} V(u) \, \gamma(s,du)
\end{equation*}
introduces a path-dependent memory component into the mean-reversion dynamics.

Choosing $\gamma(s,du) = c\,\delta_{(s-\tau)^+}(du)$ for some $c \in \R_{\geq 0}$ and fixed delay $\tau > 0$ leads to a stochastic delay differential equation for the variance process, as studied in \cite{flore_FeynmanKac_2019} and choosing $\gamma(s,du) \equiv 0$ leads to the classical inhomogeneous Heston model as studied in \cite{ackermann_Inhomogeneous_2022} for the case where the Volterra kernels correspond to the identity matrix.
Intuitively, $V$ still mean-reverts toward its level $\theta(t)$ with rate $\kappa(t)$, but the strength of this mean reversion is influenced by the historical path of volatility.
The measure $\gamma$ can reinforce the effect of past volatility on the current drift,
leading to more persistent volatility dynamics and stronger autocorrelation of $V$.
Other types of delayed Heston-models have also been proposed as in \cite{swishchuk_Smiling_2014}.

\begin{proposition}
    Equation \eqref{eq:path-dep-inhom-heston-vol} possesses a $[0,\infty)$-valued continuous weak solution $V$.
    Furthermore,
    \begin{equation}\label{eq:path-dep-inhom-heston-sol-of-price}
        S(t) = S(0)e^{\int_{0}^{t} \eta(s)\sqrt{V(s)}\bigl(\sqrt{1-\rho^2}dW_1(s) + \rho dW_2(s)\bigr) - \int_{0}^{t} \tfrac{\eta(s)^2V(s)}{2} ds}, \quad t \in [0,T],
    \end{equation}
    is a solution of \eqref{eq:path-dep-inhom-heston-spot-price}, and $S$ is a martingale.
\end{proposition}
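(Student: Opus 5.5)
Existence of a non-negative weak solution $V$ will follow from Theorem~\ref{thm:weak-existence}, applied to a modified equation whose diffusion coefficient is $\overline{\sigma}(s)\sqrt{V(s)^+}$. I will then show that any solution of the modified equation stays non-negative, so it solves \eqref{eq:path-dep-inhom-heston-vol}. The price $S$ is handled afterwards by an explicit stochastic exponential.

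\emph{Step 1: the modified equation.} Define on $\ccC_T^1$
\[
\tilde b(s,v)=\kappa(s)\bigl(\theta(s)-v(s)^+\bigr)+\int_{[0,s]} v(u)^+\,\gamma(s,du),\qquad
\tilde\sigma(s,v)=\overline{\sigma}(s)\sqrt{v(s)^+}.
\]
Both coefficients are non-anticipative. For continuity of $\tilde b$ in $d_{\ccC}$ I need some regularity of $s\mapsto\gamma(s)$, for instance weak continuity with bounded total variation. Without it, I would replace the drift integrand by $\int_0^t\tilde b\,ds$-type arguments, since Lemma~\ref{lm:stability-of-weak-sol} really only uses convergence of the integrated drift. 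Linear growth holds because $\sqrt{y^+}\le 1+|y|$ and $\sup_s\|\gamma(s)\|_{TV}$ is finite, or in the $L^1$ case one carries the integrable bound through Lemma~\ref{lm:estimate-existence}. Theorem~\ref{thm:weak-existence} then yields a continuous weak solution $V$ of the modified equation.

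\emph{Step 2: positivity, the main obstacle.} The drift of the modified equation at a point where $V(s)\le 0$ is
\[
\kappa(s)\theta(s)-\kappa(s)V(s)^+ +\int V^+\,d\gamma(s)\;\ge\;0,
\]
because $\gamma\ge0$. I would make this rigorous with the Yamada-type argument used for CIR processes. Let $\rho_n$ be the usual Yamada functions with $\int\rho_n^{-1}=n$. Let $\varphi_n$ be a smooth convex approximation of $x\mapsto x^-$ with $\varphi_n'\le 0$, $\varphi_n'=0$ on $[0,\infty)$, and $0\le\varphi_n''\le 2/(n\rho_n(x))$ supported near $0$ on the negative side. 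Apply It\^o's formula to $\varphi_n(V)$. The diffusion correction is supported on $\{V<0\}$, where $\tilde\sigma=0$, so it vanishes. The drift contribution is $\varphi_n'(V)\tilde b\le 0$, since $\tilde b\ge0$ on $\{V\le0\}$. Taking expectations after localization, and letting $n\to\infty$, gives $E[V(t)^-]\le V(0)^-=0$. Hence $V\ge0$, so $\tilde\sigma=\overline{\sigma}\sqrt V$ and $\tilde b$ equals the original drift. Even simpler: on $\{V<0\}$ the process has zero diffusion and non-negative drift, so by Tanaka's formula
\[
V(t)^-=-\int_0^t\Ind_{\{V<0\}}\,dV+\tfrac12 L^0_t .
\]
The local time $L^0$ vanishes because $d\langle V\rangle=\overline{\sigma}^2V^+\,ds$ and $\int_0^\infty a^{-1}\,dL^a=\int\Ind_{\{V>0\}}V^{-1}\,d\langle V\rangle<\infty$. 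Therefore $V^-$ is non-increasing from $0$.

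\emph{Step 3: price and martingale property.} Set $U(t)=\int_0^t\eta\sqrt V\,(\sqrt{1-\rho^2}\,dW_1+\rho\,dW_2)$, with $\langle U\rangle=\int\eta^2V\,ds$. It\^o's formula shows that $S=S(0)\mathcal E(U)$ is a solution of \eqref{eq:path-dep-inhom-heston-spot-price}, so $S$ is a positive local martingale and hence a supermartingale. To show $E[S(T)]=S(0)$ I repeat the localization argument of Proposition~\ref{prop:pd-vol-exp-affine-formula}. Let $\tau_n=\inf\{t:V(t)>n\}\wedge T$ and define $dQ^n/dP=S(\tau_n)/S(0)$, which is valid by Novikov's condition. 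Under $Q^n$, $V$ solves the same path-dependent equation with extra drift $\rho\,\eta\,\overline{\sigma}\,V\Ind_{\{t\le\tau_n\}}$, which satisfies linear growth uniformly in $n$ since $\eta$ and $\overline{\sigma}$ are bounded. Lemmas~\ref{lm:estimate-existence} and~\ref{lm:hoelder-continuity} then give $Q^n(\tau_n<T)\le C/n^p$. Consequently
\[
E[S(T)]/S(0)\ \ge\ Q^n(\tau_n=T)\ \to\ 1,
\]
and $S$ is a true martingale.
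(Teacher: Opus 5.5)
Your proof is correct, but it differs from the paper's on both halves.

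\emph{Positivity of $V$.} The paper solves the equation with the untruncated drift $\hat b(t,x)=\kappa(t)\theta(t)-\kappa(t)x(t)+\int_{[0,t]}x(s)\,\gamma(t,ds)$ and $\hat\sigma=\overline{\sigma}\sqrt{x(t)^+}$. It then argues that at a \emph{first} hitting time of $0$ the diffusion vanishes and the drift is non-negative, and invokes the stochastic invariance theorem \cite[Theorem 2.3]{jaber_Stochastic_2019}. You also put positive parts into the drift, so $\tilde b(s,v)\ge 0$ on the whole set $\{v(s)\le 0\}$, whatever the past of the path. Positivity then becomes a short pathwise It\^o or Tanaka computation. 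This makes the argument self-contained. The paper instead applies an invariance result stated for state-dependent coefficients to a path-dependent drift without further justification, and its drift $\hat b$ can be negative at $0$ along paths that were negative earlier.

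\emph{The price $S$.} The paper defers this part to \cite[Proposition 4.1]{ackermann_Inhomogeneous_2022}, which concerns a variance process without the memory term. Your Girsanov localization explicitly tracks the extra $\gamma$-term in the $Q^n$-dynamics of $V$, so it genuinely covers the path-dependent model. The costs are length and reliance on Lemma~\ref{lm:estimate-existence}. You should also enlarge the probability space to carry an independent $W_1$, since the weak solution only provides $W_2$.

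\emph{Regularity of $\gamma$.} Your caveat is legitimate and applies equally to the paper. Under the sole assumption $\int_0^T\|\gamma(s)\|_{TV}\,ds<\infty$, the drift need be neither $d_{\ccC}$-continuous nor of uniform linear growth. So Theorem~\ref{thm:weak-existence} needs an extra regularity hypothesis on $s\mapsto\gamma(s)$ in either proof.

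Two minor points on Step 2:
\begin{itemize}
\item The Yamada functions are superfluous. Since $\varphi'$ vanishes on $[0,\infty)$ and $\tilde\sigma$ vanishes on $(-\infty,0]$, take for instance $\varphi(x)=(x^-)^3$. The stochastic-integral and second-order terms are then identically zero, and $\varphi(V(t))=\int_0^t\varphi'(V)\,\tilde b\,ds\le 0$ pathwise.
\item In the Tanaka version with the right-continuous local time, the indicator should be $\Ind_{\{V\le 0\}}$, and the occupation formula reads $\int_0^\infty a^{-1}L^a_t\,da$. The conclusion is unchanged, because $dV=\tilde b\,ds\ge 0$ on $\{V=0\}$.
\end{itemize}
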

\begin{proof}
    The second part of the proposition has been shown in \cite[Proposition 4.1]{ackermann_Inhomogeneous_2022}.
    For $(t,x) \in \ccC_T^1$ let $\hat{b}(t,x) = \kappa(t)\theta(t) - \kappa(t)x(t) + \int_{[0,t]} x(s) \, \gamma(t,ds)$ and $\hat{\sigma}(t,x) = \overline{\sigma}(t)\sqrt{x(t)^+}$, and consider the path-dependent equation
    \begin{equation}\label{eq:sol-of-inhom-heston}
        V(t) = V(0) + \int_{0}^{t} \hat{b}(s,V_{[0,s]})  \, ds + \int_{0}^{t} \hat{\sigma}(s,V_{[0,s]}) \, dW_2(s), \quad t \in [0,T].
    \end{equation}
    Due to the continuity of $\kappa,\theta$ and $\overline{\sigma}$, $\hat{b}$ and $\hat{\sigma}$ satisfy the linear growth condition \ref{LG-Condition}. It now follows from Theorem~\ref{thm:weak-existence} that Equation \eqref{eq:sol-of-inhom-heston} admits a weak solution $V$.
    Furthermore, for a path $x \in C([0,t],\R^d)$ that hits the boundary $0$ for the first time at that time $t \in [0,T]$, we have $\hat{\sigma}(t,x) = 0$ and $\hat{b}(t,x) = \kappa(t)\theta(t) + \int_{[0,t]} x(s) \, \gamma(t,ds) \geq 0$, since the functions $\kappa$ and $\theta$ are non-negative and the measure is also non-negative. Hence, the drift points inward at the boundary and the diffusion is tangent
    to the boundary. By a one-dimensional stochastic invariance theorem for
    closed convex sets (e.g. \cite[Theorem 2.3]{jaber_Stochastic_2019}), the weak solution with $V(0) \in [0,\infty)$ satisfies $V(t) \in [0,\infty)$ for all $0\leq t\leq T$ almost surely.
\end{proof}
For the remainder of this section let $V$ be a $[0,\infty)$-valued continuous weak solution of \eqref{eq:path-dep-inhom-heston-vol} and let $S$ be the solution of \eqref{eq:path-dep-inhom-heston-spot-price} given by \eqref{eq:path-dep-inhom-heston-sol-of-price}.
Note that It\^o's formula shows that for all $t \in [0,T]$ the log-price is given by
\begin{equation}\label{eq:log-price-dynamics}
    \log(S(t)) = \log(S(0)) + \int_{0}^{t} \eta(r) \sqrt{V(r)} \bigl(\sqrt{1-\rho^2}dW_1(r) + \rho dW_2(r)\bigr) - \int_{0}^{t} \tfrac{\eta^2(r)V(r)}{2} dr.
\end{equation}
Hence, the pair $X = (\log(S), V)^\top$ satisfies

\begin{align*}
    \begin{split}
        \begin{pmatrix}
            X^1(t) \\ X^2(t)
        \end{pmatrix}
         & =
        \begin{pmatrix}
            X^1(0) \\ X^2(0)
        \end{pmatrix}
        + \int_{0}^{t} \biggl[
            \begin{pmatrix}
                0 \\ \kappa(r)\theta(r)
            \end{pmatrix}
            +
            \begin{pmatrix}
                0 \\ 0
            \end{pmatrix} X^1(r)
            +
            \int_{[0,r]} X^2(u)
            \begin{pmatrix}
                - \tfrac{\eta^2(r)}{2}\delta_r(du) \\
                -\kappa(r)\delta_r(du) + \gamma(r,du)
            \end{pmatrix}
        \biggr]dr               \\
         & \quad + \int_{0}^{t}
        \int_{[0,r]} \sqrt{X^2(u)}
        \begin{pmatrix}
            \eta(r) \sqrt{1-\rho^2} \delta_r(du) & \eta(r)\rho \delta_r(du)          \\
            0                                    & \overline{\sigma}(r) \delta_r(du)
        \end{pmatrix}
        dW(r), \quad t \in [0,T].
    \end{split}
\end{align*}
For $(t,x) \in \ccC_T^2$ let

\begin{align}\label{eq:path-dep-inhom-heston-parameters}
    \begin{split}
        \widetilde{b}^0(t)      & =
        \begin{pmatrix}
            0 \\ \kappa(t)\theta(t)
        \end{pmatrix}, \quad
        \widetilde{b}^1(t,du) = 0, \quad
        \widetilde{b}^2(t,du) =
        \begin{pmatrix}
            - \tfrac{\eta^2(t)}{2}\delta_t(du) \\ -\kappa(t)\delta_t(du) + \gamma(t,du)
        \end{pmatrix},                                    \\
        \widetilde{b}(t,x)      & = \widetilde{b}^0(t) + \widetilde{b}^1(t) \cdot x^1 + \widetilde{b}^2(t) \cdot x^2, \\
        \widetilde{\sigma}(t,x) & = \int_{[0,t]} \sqrt{x^2(u)}
        \begin{pmatrix}
            \eta(t) \sqrt{1-\rho^2} \delta_t(du) & \eta(t)\rho \delta_t(du)          \\
            0                                    & \overline{\sigma}(t) \delta_t(du)
        \end{pmatrix}, \quad a^0(t) = 0, \quad a^1(t,du) \equiv 0,                                      \\
        a^2(t,du)               & =
        \begin{pmatrix}
            \eta^2(t) \delta_t(du)                       & \rho \eta(t)\overline{\sigma}(t) \delta_t(du) \\
            \rho \eta(t)\overline{\sigma}(t)\delta_t(du) & \overline{\sigma}^2(t) \delta_t(du)
        \end{pmatrix}, \quad a(t,x) = a^0(t) + a^1(t) \cdot x^1 + a^2(t) \cdot x^2,
    \end{split}
\end{align}

and observe that Assumption \ref{ass:affine-coeff} with state space $E = \R \times [0, \infty)$ and Assumption \ref{LG-Condition} are satisfied. Then
\begin{equation}
    X(t) = X(0) + \int_{0}^{t} \widetilde{b}(u,X_{[0,u]}) du + \int_{0}^{t} \widetilde{\sigma}(u, X_{[0,u]}) \, dW(u), \quad t \in [0,T].
\end{equation}
With \eqref{eq:path-dep-inhom-heston-parameters} we thus obtain that for the inhomogeneous path-dependent Heston model the Riccati equation \eqref{riccati-ode} for any $\mu \in \ccM([0,T], \C^2)$ reads
\begin{align}\label{eq:app-temp-1}
    \psi_1(t) & = \mu_1([t,T]) \\
    \begin{split}\label{eq:app-temp-2}
        \psi_2(t) & = \mu_2([t,T]) + \int_{t}^{T} \frac{\eta^2(s)}{2}\bigl(\psi_1^2(s)-\psi_1(s)\bigr) + \psi_2(s)\bigl(\gamma(s,[t,s]) - \kappa(s)\bigr) + \frac{\overline{\sigma}^2(s)}{2}\psi_2^2(s) \\
                  & \quad + \rho \overline{\sigma}(s)\eta(s)\psi_1(s)\psi_2(s) \, ds, \quad t \in [0,T].
    \end{split}
\end{align}
In the non-delay case, i.e. $\gamma(s, dr) = 0$ and $\mu(dr) = u\delta_T(dr)$ for $u \in \C^2$, we recover the Riccati equations of \cite{ackermann_Inhomogeneous_2022} with $K \equiv \Id$. The $\C$-valued process $Y = (Y(t))_{t\in[0,T]}$ is given by
\begin{align}\label{eq:Y-pd-heston}
    \begin{split}
        Y(t) & = \phi(t) + \psi(0)X(0) +\int_0^t \psi(s) dX(s)                                                                                                                        \\
             & \quad - \sum_{i = 1}^{2} \int_0^t \psi(s) \bigl(\widetilde{b}^i(s) \cdot X_{[0,s]}\bigr) + \frac{1}{2} \psi(s) \biggl( a^i(s) \cdot X_{[0,s]} \biggr) \psi(s)^\top ds.
    \end{split}
\end{align}

\begin{proposition}
    Suppose that $\mu \in \ccM([0,T], \C^2)$ such that $\re(\mu_1([t,T])) \in [0,1]$ and $\re(\mu_2([t,T])) \leq 0$ for all $t \in [0,T]$.
    Then the Riccati equation \eqref{eq:app-temp-2} has a unique solution $\psi_2 \in L^2([0,T], \C)$. Moreover, it satisfies $\re(\psi_2) \leq 0$.
\end{proposition}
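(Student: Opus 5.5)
The plan is to mimic the proof of Proposition~\ref{prop:global-sol-ricc-pd-vol}, which in turn follows \cite[Lemma 6.3]{abijaber_Affine_2019}. Since \eqref{eq:app-temp-1} gives $\psi_1(t)=\mu_1([t,T])$ explicitly, equation \eqref{eq:app-temp-2} becomes a scalar Riccati--Volterra equation for $\psi_2$ alone:
\begin{equation*}
    \psi_2(t) = f(t) + \int_t^T \psi_2(s) k(t,s) + \frac{\overline{\sigma}^2(s)}{2}\psi_2(s)^2 \, ds,
\end{equation*}
where
\begin{equation*}
    f(t) = \mu_2([t,T]) + \int_t^T \frac{\eta^2(s)}{2}\bigl(\psi_1(s)^2-\psi_1(s)\bigr)\,ds,
    \quad
    k(t,s) = \gamma(s,[t,s]) - \kappa(s) + \rho\overline{\sigma}(s)\eta(s)\psi_1(s).
\end{equation*}
The functions $\psi_1$ and $t\mapsto\gamma(s,[t,s])$ are c\`agl\`ad and bounded, because $\int_0^T\|\gamma(s)\|_{TV}ds<\infty$. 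Theorem~\ref{thm:existence-of-noncontinuable-riccati} then yields a unique noncontinuable solution $(\psi_2,T_{\min})$. It remains to show $\re\psi_2\le 0$ and that $\psi_2$ stays bounded, so that $T_{\min}$ cannot be an explosion time and the solution is global.

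First I would check the sign of the forcing. Write $\psi_1 = x+iy$ with $x=\re\mu_1([t,T])\in[0,1]$. Then
\begin{equation*}
    \re(\psi_1^2-\psi_1) = x^2 - x - y^2 \le 0,
\end{equation*}
so $\re f(t)\le 0$ for all $t$, using also $\re\mu_2([t,T])\le0$. This is exactly where the hypothesis $\re\mu_1([t,T])\in[0,1]$ enters.

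Next I split $\psi_2=\psi^r+i\psi^i$ and write the two real equations. The kernel $k$ is complex because $\psi_1$ may be complex, which mixes real and imaginary parts. I will write $k=k^r+ik^i$, with $k^i=\rho\overline{\sigma}\eta\,\im\psi_1$ bounded. The real part then satisfies
\begin{equation*}
    \psi^r(t) = \re f(t) + \int_t^T \Bigl(k^r\psi^r - k^i\psi^i + \tfrac{\overline{\sigma}^2}{2}\bigl((\psi^r)^2-(\psi^i)^2\bigr)\Bigr)\,ds,
\end{equation*}
and $\chi=-\psi^r$ solves a linear Volterra equation with coefficient $k^r+\tfrac{\overline{\sigma}^2}{2}\psi^r$.

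\textbf{Main obstacle.} The term $-k^i\psi^i$ has no definite sign, and this prevents the direct application of \cite[Theorem C.2]{abijaber_Affine_2019} that worked in Proposition~\ref{prop:global-sol-ricc-pd-vol}, where $\psi_1$ was absent. My first attempt is to absorb this cross term by completing the square: combine $\tfrac{\overline{\sigma}^2}{2}(\psi^i)^2 + k^i\psi^i \ge -\tfrac{(k^i)^2}{2\overline{\sigma}^2}$, using that $\overline{\sigma}>0$. The resulting lower bound would have to be dominated by the negative real part of the forcing, namely $\tfrac{\eta^2}{2}(x^2-x-y^2)$. The only structure available for this comparison is $(k^i)^2/\overline{\sigma}^2 = \rho^2\eta^2y^2 \le \eta^2y^2$, and I expect that to be just enough to close the sign argument.

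Second, the kernel $k^r$ contains $-\kappa(s)+\rho\overline{\sigma}\eta x$. This is not monotone in $t$ in the sense required by the comparison theorem with $K\equiv\Id$. I would handle it by treating the $\kappa$ and $\rho$ parts, which depend only on $s$, as a multiplicative drift, and the $\gamma(s,[t,s])\ge0$ part as the nonnegative resolvent-type kernel. Once $\psi^r\le0$ is established, I bound $|\psi^i|\le h$ and $-\ell\le\psi^r$. Here $h$ and $\ell$ solve the linear equations with kernel $|k|$ and forcing $|\im f|$, respectively $-\re f+\tfrac{\overline{\sigma}^2}{2}h^2$, and they are global and bounded by \cite[Corollary B.3]{abijaber_Affine_2019}. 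This rules out blow-up, so $T_{\min}$ cannot be an explosion time, and gives the unique global $\psi_2\in L^2$ with $\re\psi_2\le0$.
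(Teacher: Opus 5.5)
Your route is the paper's route: $\psi_1(t)=\mu_1([t,T])$ is explicit, Theorem~\ref{thm:existence-of-noncontinuable-riccati} gives a noncontinuable solution, and completing the square with $\rho^2\le 1$, $\overline{\sigma}>0$ gives the paper's non-negative density $w=\frac{\eta^2}{2}\bigl(x-x^2+(1-\rho^2)y^2\bigr)+\frac12\bigl(\overline{\sigma}\,\im\psi_2+\rho\eta y\bigr)^2$ in the equation for $\chi=-\re\psi_2$. This is followed by a comparison step and the bounds $h,\ell$.

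The genuine gap, which the paper's proof shares, is the comparison step. You only use that the free term $-\re\mu_2([t,T])$ is pointwise non-negative. The linear equation, however, is
\[
\chi(t)=-\re\mu_2([t,T])+\int_t^T\Bigl(w(s)+\bigl(z(s)+\gamma(s,[t,s])\bigr)\chi(s)\Bigr)\,ds,\qquad z=-\kappa+\rho\overline{\sigma}\eta x+\tfrac{\overline{\sigma}^2}{2}\re\psi_2 ,
\]
and $z$ is negative wherever $\chi>0$. So when the free term drops back to $0$, $\chi(t)$ can become negative. A pointwise sign is not propagated. What is propagated is a free term of the form $\nu([t,T])$ with $\nu\ge 0$, i.e.\ $\re\mu_2\le 0$ as a measure.

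In fact the statement is false as written. Take $\mu_1=0$, $\gamma\equiv 0$, $\kappa\equiv 0$, $\overline{\sigma}\equiv 1$, any $\eta,\rho$, $T=3$ and $\mu_2=\delta_1-\delta_2$, so that $\mu_2([t,T])=-\Ind_{(1,2]}(t)\le 0$. Then \eqref{eq:app-temp-2} reads $\psi_2(t)=\mu_2([t,T])+\int_t^T\frac12\psi_2(s)^2\,ds$. Its solution is $\psi_2=0$ on $(2,3]$, $\psi_2(t)=2/(t-4)$ on $(1,2]$, and $\psi_2(t)=2/(t+5)>0$ on $[0,1]$. With $\mu_2=10(\delta_1-\delta_2)$ instead, one gets $\psi_2(1)=25/3$ and $\psi_2(t)=2/(t-19/25)$ on $(19/25,1]$. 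Hence $T_{\min}=19/25>0$, and there is not even a global $L^2$ solution.

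The correct hypothesis is the one the paper uses in the next proposition: $\re\mu_2$ is a non-positive measure. Under it, your idea of putting $z$ into an integrating factor and keeping $\gamma(s,[t,s])\ge 0$ as a positive kernel closes the sign step. One has $\chi(t)=\int_{[t,T]}e^{\int_t^s z(r)\,dr}\,\nu^\chi(ds)$, where
\[
\nu^\chi(A)=-\re\mu_2(A)+\int_A w(s)\,ds+\int_0^T\chi(s)\,\gamma\bigl(s,A\cap[0,s]\bigr)\,ds .
\]
Picard iteration from $0$ stays in the non-negative cone. This is cleaner than the paper's appeal to Theorem C.2 of \cite{abijaber_Affine_2019} with $K\equiv\Id$, which does not literally cover the $t$-dependent kernel. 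Note also that $-\kappa+\rho\overline{\sigma}\eta x$ does not depend on $t$; the only $t$-dependence comes through $\gamma$.

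Two further steps need repair.
\begin{enumerate}
\item Your $h,\ell$ ignore the cross terms $\rho\overline{\sigma}\eta y\,\re\psi_2$ in the imaginary equation and $-\rho\overline{\sigma}\eta y\,\im\psi_2$ in the real one, which is exactly the obstacle you identified. The paper, following \cite[Lemma 7.4]{abijaber_Affine_2019}, works with $\im\psi_2+\rho\eta\overline{\sigma}^{-1}y$. For this quantity, $\rho\overline{\sigma}\eta y\,\re\psi_2+\overline{\sigma}^2\re\psi_2\,\im\psi_2=\overline{\sigma}^2\re\psi_2\bigl(\im\psi_2+\rho\eta\overline{\sigma}^{-1}y\bigr)$ is a damping term once $\re\psi_2\le 0$. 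This yields $|\im\psi_2|\le h+|\rho\overline{\sigma}^{-1}\eta y|$.
\item Theorem~\ref{thm:existence-of-noncontinuable-riccati} needs $\sup_t|k(t,s)|\le\Pi_T(s)$ with $\Pi_T\in L^2$. But $\int_0^T\|\gamma(s)\|_{TV}\,ds<\infty$ only gives $L^1$, and boundedness of $t\mapsto\gamma(s,[t,s])$ for fixed $s$ is not enough.
\end{enumerate}
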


\begin{proof}
    The proof follows closely to the one of \cite[Lemma 7.4]{abijaber_Affine_2019}.
    By Theorem \ref{thm:existence-of-noncontinuable-riccati}, there exists a unique noncontinuable solution $(\psi_2,T_{\min})$ of \eqref{eq:app-temp-2}. Let $\psi_k^r$ and $\psi_k^i$ denote the real and imaginary parts of $\psi_k$, $k = 1,2$. On the interval $(T_{\text{min}},T]$ they satisfy the equations
    \begin{align*}
        \psi_2^r(t) & = \re(\mu_2([t,T])) + \int_{t}^{T} \Biggl[ \frac{\eta(s)^2}{2} \Bigl(\psi_1^r(s)^2 - \psi_1^r(s) - \psi_1^i(s)^2\Bigr) - \rho\overline{\sigma}(s)\eta(s)\psi_1^i(s)\psi_2^i(s)                                      \\
                    & \quad - \frac{\overline{\sigma}(s)^2}{2}\psi_2^i(s)^2 + \Bigl(\rho\overline{\sigma}(s)\eta(s)\psi_1^r(s)+ \gamma(s,[t,s]) - \kappa(s)\Bigr)\psi_2^r(s) + \frac{\overline{\sigma}(s)^2}{2} \psi_2^r(s)^2 \Biggr] ds, \\
        \psi_2^i(t) & = \im(\mu_2([t,T])) + \int_{t}^{T} \Biggl[ \frac{\eta(s)^2}{2} \Bigl(2\psi_1^r(s) \psi_1^i(s) - \psi_1^i(s)\Bigr)
        +\rho\overline{\sigma}(s)\eta(s)\psi_1^i(s)\psi_2^r(s)                                                                                                                                                                            \\
                    & \quad + \Bigl( \rho\overline{\sigma}(s)\eta(s)\psi_1^r(s) + \gamma(s,[t,s]) - \kappa(s) + \overline{\sigma}(s)^2 \psi_2^r(s)\Bigr)\psi_2^i(s) \Biggr] ds.
    \end{align*}
    After some rewriting, on this interval, $-\psi^r$ satisfies the linear equation
    \begin{align*}
        \chi(t) & = - \re(\mu_2([t,T])) + \int_{t}^{T} \Biggl[ \frac{\eta(s)^2}{2} \Bigl(\psi_1^r(s) - \psi_1^r(s)^2 + (1-\rho^2) \psi_1^i(s)^2\Bigr) \\
                & \quad + \frac{\bigl(\overline{\sigma}(s)\psi_2^i(s)+\rho\eta(s)\psi_1^i(s)\bigr)^2}{2}
            +\Bigl( \rho\overline\sigma(s)\eta(s)\psi_1^r(s) +\gamma(s,[t,s])-\kappa(s) +\frac{\overline\sigma(s)^2}{2}\psi_2^r(s) \Bigr)\chi(s)\Biggr] ds.
    \end{align*}
    Since $\eta \geq 0$ and $\overline{\sigma} >0$, as well as $\psi_1^r,|\rho| \in [0,1]$ and $\re \mu_2([t,T])$ nonpositive for every $t \in [0,T]$, \cite[Theorem C.2]{abijaber_Affine_2019} with $K \equiv \Id$ yields $\psi_2^r \leq 0$ on $(T_{\text{min}},T]$. The equations $h$ and $\ell$ in the proof of \cite[Lemma 7.4]{abijaber_Affine_2019} become
    \begin{align*}
        h(t)    & = |\im \mu_2([t,T])| + |\rho \overline{\sigma}(t)\eta(t)\psi_1^i(t)| + \int_{t}^{T} \Biggl[ \Bigl|\frac{\eta(s)^2}{2}\bigl(2\psi_1^r(s)\psi_1^i(s) - \psi_1^i(s)\bigr)                                                    \\
                & \quad - \bigl(\rho \overline{\sigma}(s)^{-1}\eta(s)\psi_1^r(s) + \gamma(s,[t,s])- \kappa(s)\bigr)\rho \eta(s)\overline{\sigma}(s)^{-1} \psi_1^i(s)\Bigr|                                                                  \\
                & \quad + \Bigl(\rho \overline{\sigma}(s)^{-1}\eta(s)\psi_1^r(s) + \gamma(s,[t,s])- \kappa(s)\Bigr)h(s) \Biggr] ds,                                                                                                         \\
        \ell(t) & = \re \mu_2([t,T]) + \int_{t}^{T} \Biggl[ \frac{\eta(s)^2}{2} \Bigl(\psi_1^r(s)^2 - \psi_1^r(s) - \psi_1^i(s)^2\Bigr)                                                                                                     \\
                & \quad - |\rho \overline{\sigma}(s)\eta(s)\psi_1^i(s)|\bigl(h(s) + |\rho \overline{\sigma}(s)^{-1}\eta(s)\psi_1^i(s)|\bigr)                                                                                                \\
                & \quad - \frac{\overline{\sigma}(s)^2}{2}\bigl(h(s)+|\rho \overline{\sigma}(s)^{-1}\eta(s)\psi_1^i(s)|\bigr)^2 + \bigl(\rho \overline{\sigma}(s)\eta(s) \psi_1^r(s) + \gamma(s,[t,s]) - \kappa(s)\bigr)\ell(s) \Biggr] ds.
    \end{align*}
    By \cite[Corollary B.3]{abijaber_Affine_2019} with $K \equiv \Id$ there exists unique global solutions $h,l \in L^2([0,T], \R)$. As in the proof of \cite[Lemma 7.4]{abijaber_Affine_2019} one shows that $\ell \leq \psi_2^r \leq 0$ and $|\psi_2^i| \leq h + |\rho \overline{\sigma}(t)^{-1}\eta(t)\psi_1^i|$. From this it follows by the same arguments as in the proof of \cite[Lemma 7.4]{abijaber_Affine_2019} that there exists a unique global solution $\psi_2 \in L^2([0,T],\C)$ of \eqref{eq:app-temp-2} which satisfies $\re \psi_2 \leq 0$.
\end{proof}

\begin{proposition}
    Suppose that $\mu \in \ccM([0,T], \C^2)$ such that $\re(\mu_1([t,T])) \in [0,1]$ for all $t \in [0,T]$ and $\re(\mu_2)$ is a non-positive measure.
    Let $\psi_2 \in L^2([0,T], \C)$ denote the unique solution of \eqref{eq:app-temp-2}. Then $\exp(Y)$ with $Y$ defined by \eqref{eq:Y-pd-heston} is a true martingale. In particular, the exponential-affine transform formula \eqref{exp-affine-formula} holds and the weak solution $(\log S, V)$ to \eqref{eq:path-dep-inhom-heston-spot-price}-\eqref{eq:path-dep-inhom-heston-vol} is unique.
\end{proposition}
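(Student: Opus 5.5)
Following the proof of Proposition~\ref{prop:pd-vol-exp-affine-formula}, we first write $\exp(Y)$ as a stochastic exponential and then remove the localisation by a change of measure. By Theorem~\ref{main_thm} and the preceding proposition, $\psi_1(t)=\mu_1([t,T])$ and $\psi_2$ are bounded. We also have
\[
Y(t)=Y(0)+U(t)-\tfrac12\langle U\rangle(t),\qquad U(t)=\int_0^t\psi(s)\widetilde\sigma(s,X_{[0,s]})\,dW(s),
\]
so $\exp(Y)=\exp(Y(0))M$ with $M=\mathcal E(U)$.

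\textbf{Step 1: domination of $|M|$.} Since $\mu$ may be complex, $M$ need not be real. Set $N=\exp(\re U-\tfrac12\langle \re U\rangle)$, which is a real stochastic exponential. Then
\[
|M(t)|=N(t)\exp\Bigl(-\tfrac12\int_0^t \bigl[\psi^r a\psi^{r\top}-\psi^r a\psi^{r\top}+\psi^i a\psi^{i\top}\bigr]\,ds\Bigr)\le N(t),
\]
because $a=a^2(t)\cdot X^2$ is positive semidefinite, as $V\ge0$. More directly, $|e^{Y(t)}|=e^{\re Y(t)}$ with
\[
\re Y=\re Y(0)+\re U-\tfrac12\int(\psi^r a\psi^{r\top}-\psi^i a\psi^{i\top})\,ds .
\]
The subtracted term satisfies $\psi^i a\psi^{i\top}\ge0$, so $|e^{Y}|\le e^{\re Y(0)}\mathcal E(\re U)$. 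It therefore suffices to prove that $\mathcal E(\re U)$ is a true martingale. If so, $M$ is a local martingale dominated by a uniformly integrable martingale up to time $T$, and is hence a true martingale by dominated convergence along a localising sequence.

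\textbf{Step 2: $\mathcal E(\re U)$ is a martingale.} Here $\re U=\int\psi^r\widetilde\sigma\,dW$ with $\psi^r=(\re\mu_1([\cdot,T]),\psi_2^r)$ bounded. As in Proposition~\ref{prop:pd-vol-exp-affine-formula}, we localise with $\tau_n=\inf\{t:|X(t)|>n\}\wedge T$ (effectively a bound on $V$, since only $V$ enters $\widetilde\sigma$). Novikov makes the stopped exponential a martingale, and we define $Q^n$ through it. By Girsanov, under $Q^n$ the variance satisfies
\[
dV=\Bigl(\kappa\theta-\kappa V+\gamma\cdot V+\overline\sigma\bigl(\rho\eta\psi^r_1+\overline\sigma\psi_2^r\bigr)V\Bigr)dt+\overline\sigma\sqrt V\,d\widetilde W_2
\]
up to $\tau_n$, and $\log S$ acquires a drift linear in $V$. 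All coefficients are affine with bounded parameters and satisfy Assumption~\ref{LG-Condition} uniformly in $n$; the square-root diffusion satisfies linear growth. Lemma~\ref{lm:estimate-existence} and Lemma~\ref{lm:hoelder-continuity} then give moment bounds uniform in $n$, so $Q^n(\tau_n<T)\to0$ and $E[\mathcal E(\re U)_T]\ge1$. Since it is a supermartingale, it is a martingale.

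\textbf{Step 3: transform formula and uniqueness.} With $\exp(Y)$ a true martingale, Theorem~\ref{main_thm} gives \eqref{exp-affine-formula} for all such $\mu$, and hence for all $t$ in the stated class. For uniqueness in law, take any weak solution $(\log S,V)$. The same argument applies to it, so the Fourier--Laplace functional $E[e^{\mu\cdot X_{[0,T]}}]=\exp(\phi(0)+\psi(0)X(0))$ is determined by the Riccati solution alone. The class of admissible $\mu$ contains all $\mu=i\nu$ with $\nu$ a real finite measure (then $\re\mu=0$). Taking $\nu$ a finite combination of Dirac masses, we obtain all characteristic functions of the finite-dimensional distributions of $X$. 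These therefore coincide for any two weak solutions, and the laws on $C([0,T],\R^2)$ agree.

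\textbf{Main obstacle.} The delicate point is Step 2. The Girsanov drift $\overline\sigma^2\psi_2^r V$ and the $\gamma$ memory term must keep linear growth in the sup norm uniformly in $n$, and the $\log S$ component must not spoil the moment bounds. I expect this to work because every added drift is linear in $V$ with bounded coefficients and $\int\|\gamma(s)\|_{TV}\,ds<\infty$. If needed, I would localise on $V$ alone, since $\widetilde\sigma$ does not depend on $\log S$.
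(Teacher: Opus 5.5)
\textbf{Sign error in Step 1.} Your Step 1 is wrong, and the rest of the argument depends on it. For the complex local martingale $U=\int\psi\widetilde\sigma\,dW$ the bracket is bilinear, so $\re\langle U\rangle=\langle\re U\rangle-\langle\im U\rangle$. Hence
\[
\re Y(t)=\re Y(0)+\re U(t)-\tfrac12\langle\re U\rangle(t)+\tfrac12\int_0^t\psi^i(s)\,a(s,X_{[0,s]})\,\psi^i(s)^\top\,ds .
\]
Because $\psi^i a\psi^{i\top}\ge0$, this gives $|e^{Y}|\ge e^{\re Y(0)}\mathcal{E}(\re U)$, which is the opposite of what you need. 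In your first display the correction factor should be $\exp\bigl(+\tfrac12\int\psi^i a\psi^{i\top}ds\bigr)\ge1$. This factor equals $\exp\bigl(\tfrac12\int_0^t c(s)V(s)\,ds\bigr)$ for some bounded $c\ge0$, and nothing in your argument controls it.

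A sign that something is missing: your martingale argument never uses $\re\psi_2\le0$, $\re\mu_2\le0$, $\gamma\ge0$ or $\re\mu_1([t,T])\in[0,1]$. For complex $\mu$, the stochastic-exponential form alone does not give an integrable dominating process.

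\textbf{The missing idea.} Bound $\re Y$ through the path representation \eqref{Y-tilde} rather than the stochastic-exponential form. For this model \eqref{Y-tilde} reads
\[
Y(t)=\phi(t)+\psi_1(t)\log S(t)+\psi_2(t)V(t)+\mu_1\cdot(\log S)_{[0,t)}+\mu_2\cdot V_{[0,t)}+\int_t^T\psi_2(s)\int_{[0,t)}V(u)\,\gamma(s,du)\,ds,
\]
with $\phi(t)=\int_t^T\kappa\theta\psi_2\,ds$. By $V\ge0$, $\kappa\theta\ge0$, $\re\psi_2\le0$ (from the preceding proposition), $\re\mu_2\le0$ and $\gamma\ge0$, every term involving $V$ has non-positive real part. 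Integration by parts with $\psi_1(t)=\mu_1([t,T])$ then gives
\[
\re Y(t)\le\re\psi_1(0)\log S(0)+\int_0^t\re\psi_1\,d\log S .
\]
Next use $(\re\psi_1)^2\le\re\psi_1$; this is exactly where the hypothesis $\re\psi_1\in[0,1]$ enters. It yields $|e^{Y(t)}|\le S(0)^{\re\psi_1(0)}\,\mathcal{E}(U')_t$ with $U'=\int\re\psi_1\,\eta\sqrt V\,(\sqrt{1-\rho^2}\,dW_1+\rho\,dW_2)$.

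Your Step 2 argument (localisation, Girsanov, uniform moment bounds) then applies to $\mathcal{E}(U')$. Domination of the local martingale $e^{Y}$ by this true martingale finishes the proof. Your Step 3 on the transform formula and uniqueness in law is fine as written.
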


\begin{proof}
    The proof is similar to the proof of Proposition~\ref{prop:pd-vol-exp-affine-formula}, where we show that $\re(Y)$ is bounded from above on $[0,T]$. We use the representation \eqref{Y-tilde} of $Y$ which in this case corresponds to
    \begin{align*}
        Y(t) & = \phi(t) + \psi(t)X(t) + \mu \cdot X_{[0,t)}                                                                                                                       \\
             & \quad + \int_t^T \sum_{i=1}^{2} \Bigl( \psi(s) ( \widetilde{b}^i(s) \cdot X^i_{[0,t)}) + \frac{1}{2}\psi(s) (a^i(s) \cdot X^i_{[0,t)}) \psi(s)^\top \Bigr) ds,      \\
             & = \phi(t) + \psi_1(t)\log S(t) + \psi_2(t)V(t) + \mu_1 \cdot (\log S)_{[0,t)} + \mu_2 \cdot V_{[0,t)} + \int_t^T \psi_2(s) \int_{[0,t)} V(u) \, \gamma(s,du) \, ds,
    \end{align*}
    with
    \begin{equation*}
        \phi(t) = \int_{t}^{T} \kappa(s) \theta(s)\psi_2(s) \, ds.
    \end{equation*}
    Taking real parts and inspecting the signs $V \geq 0$, $\kappa\theta \geq 0$, $\re \mu_2 \leq 0$, $\re \psi_2 \leq 0$, and $\gamma(s, \cdot) \geq 0$, all terms involving $V$ have non-positive real part. Hence,
    \begin{equation*}
        \re Y(t) \leq \re \psi_1 \log S(t) + \re\bigl(\mu_1 \cdot (\log S)_{[0,t)}\bigr)
    \end{equation*}
    and since $\psi_1(t) = \mu_1([t,T])$ integration by parts yields
    \begin{equation*}
        \re \psi_1 \log S(t) + \re\bigl(\mu_1 \cdot (\log S)_{[0,t)}\bigr) = \re \psi_1(0)\log S(0) + \int_{[0,t]} \re \psi_1(u) d\log S(u).
    \end{equation*}
    By Equation~\eqref{eq:log-price-dynamics} the log-price dynamics are
    \begin{equation*}
        d\log S(t) = -\frac{1}{2}\eta(t)^2V(t)\,dt + \eta(t)\sqrt{V(t)}
        \bigl(\sqrt{1-\rho^2}\,dW_1(t) +\rho\,dW_2(t)\bigr).
    \end{equation*}
    Hence,
    \begin{equation*}
        \re Y(t) \leq \re\psi_1(0)\log S(0) + U(t) - \frac{1}{2}\langle U \rangle(t),
    \end{equation*}
    with
    \begin{equation*}
        U(t) = \int_{0}^{t} \re\psi_1(s) \eta(s)\sqrt{V(s)} \bigl(\sqrt{1-\rho^2}\,dW_1(s) +\rho\,dW_2(s)\bigr),
    \end{equation*}
    since $\re \psi_1 \in [0,1]$, which implies $\re \psi_1 \geq (\re \psi_1)^2$. It now follows that
    \begin{equation*}
        |\exp(Y(t))| = \exp(\re Y(t)) \leq S(0)^{\re \psi_1(0)} \exp \Bigl(U(t)-\frac{1}{2}\langle U \rangle (t)\Bigr), \quad t \in [0,T].
    \end{equation*}
    In the same manner as in Proposition~\ref{prop:pd-vol-exp-affine-formula}, using the fact that $\eta,\overline{\sigma}, \re \psi_1$ and $\re \psi_2$ are bounded, we can show that $\exp \bigl(U(t)-\frac{1}{2}\langle U \rangle (t)\bigr)$ is a true martingale and hence $\exp(Y)$ is a true martingale. It remains to prove uniqueness in law for $X$.
    The law of $X$ is determined by the characteristic function
    \begin{equation*}
        \E\Big[\exp\Big(i\sum_{j=1}^n \lambda_j^\top X(t_j)\Big)\Big], \quad n \in \N,\ 0 \leq t_1,\dots,t_n \leq T,\ \lambda_j \in \R^2.
    \end{equation*}
    Now observe that for every such choice of $n$, $(t_j)_{j=1}^n$ and
    $(\lambda_j)_{j=1}^n$, the measure $\mu := i\sum_{j=1}^n \lambda_j \delta_{t_j}$
    belongs to $\ccM([0,T],\C^2)$ and satisfies $\re(\mu_2) = 0$ and $\re(\mu_1([t,T])) = 0$ for all $t \in [0,T]$.
    Hence, the corresponding characteristic function is of the form
    \begin{equation*}
        \E\Big[\exp\Big(\int_0^T X(u)\,\mu(du)\Big)\Big],
    \end{equation*}
    which is uniquely determined by the exponential-affine transform formula.
\end{proof}

\appendix

\section{Auxiliary Results}\label{app:Auxiliary}

In this appendix we collect existence and uniqueness results for path-dependent SDEs under Lipschitz and linear growth conditions, followed by existence theory for generalized Riccati equations.

\begin{lemma}\label{lem:nonanticipative-lipschitz-approximation}
    Let $q\in\N$, and let $F:\ccC_T^d\to \R^q$
    be a continuous non-anticipative functional. Assume that there exists a constant
    $C>0$ such that
    \begin{equation*}
        \|F(t,x)\|
        \leq C \biggl(1+\sup_{s\in[0,t]}\|x(s)\|\biggr), \quad (t,x)\in\ccC_T^d .
    \end{equation*}
    Then there exist continuous non-anticipative functionals
    \begin{equation*}
        F^n:\ccC_T^d\to\R^q,\quad n\in\N,
    \end{equation*}
    such that the following properties hold.

    \begin{enumerate}
        \item $F^n\to F$ locally uniformly on $\ccC_T^d$.

        \item For each $n\in\N$, $F^n$ is Lipschitz continuous with respect
              to the metric $d_{\ccC_T}$.

        \item There exists a constant $C'>0$, independent of $n$, such that
              \begin{equation*}
                  \|F^n(t,x)\|
                  \leq C'\biggl(1+\sup_{s\in[0,t]}\|x(s)\|\biggr),
                  \quad (t,x)\in\ccC_T^d,\ n\in\N .
              \end{equation*}
    \end{enumerate}

    In particular, for each $n\in\N$ and each fixed $t\in[0,T]$, there
    exists a constant $L_n>0$ such that
    \begin{equation*}
        \|F^n(t,x)-F^n(t,y)\|
        \leq L_n \sup_{s\in[0,t]}\|x(s)-y(s)\|,
        \quad x,y\in C([0,t],\R^d).
    \end{equation*}
\end{lemma}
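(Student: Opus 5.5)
We construct $F^n$ componentwise by a Pasch--Hausdorff (inf-convolution) regularization on the metric space $(\ccC_T^d,d_{\ccC})$. Since every $F^n$ is then a map on $\ccC_T^d$, it is automatically a non-anticipative functional. For $p=(t,x)\in\ccC_T^d$ write $\|p\|:=\sup_{s\in[0,t]}\|x(s)\|$. From the definition of $d_{\ccC}$ we have $\bigl|\|p\|-\|p'\|\bigr|\le d_{\ccC}(p,p')$. Fix a component $k\in\{1,\dots,q\}$ and set $f:=F_k$, so that $|f(p)|\le C(1+\|p\|)$. For integers $n>C$ define
\begin{equation*}
    f_n(p):=\inf_{p'\in\ccC_T^d}\bigl\{f(p')+n\,d_{\ccC}(p,p')\bigr\}, \qquad F^n:=(f^1_n,\dots,f^q_n),
\end{equation*}
where $f^k_n$ denotes this construction applied to $F_k$. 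For $n\le C$ we simply take $F^n:=F^{\lceil C\rceil+1}$, which does not affect the limit.

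\textbf{Step 1: finiteness, growth bound and Lipschitz property.} Using $\|p'\|\le\|p\|+d_{\ccC}(p,p')$ we get
\begin{equation*}
    f(p')+n\,d_{\ccC}(p,p')\ \ge\ -C(1+\|p\|)+(n-C)\,d_{\ccC}(p,p')\ \ge\ -C(1+\|p\|).
\end{equation*}
Hence $f_n(p)\ge -C(1+\|p\|)$. Taking $p'=p$ gives $f_n\le f\le C(1+\|p\|)$. So $|f_n(p)|\le C(1+\|p\|)$ for all $n$, and property (3) holds with $C'=\sqrt q\,C$ for the vector $F^n$. Moreover, $f_n$ is a finite infimum of $n$-Lipschitz functions of $p$, so it is $n$-Lipschitz with respect to $d_{\ccC}$. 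Therefore $F^n$ is Lipschitz (in particular continuous), which gives (2). The final claim follows from (2), because for a fixed $t$ we have $d_{\ccC}\bigl((t,x),(t,y)\bigr)=\sup_{s\in[0,t]}\|x(s)-y(s)\|$.

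\textbf{Step 2: localization of near-minimizers.} In the infimum defining $f_n(p)$ it suffices to consider those $p'$ with $f(p')+n\,d_{\ccC}(p,p')\le f(p)$. For such $p'$ the estimate of Step 1 yields
\begin{equation*}
    (n-C)\,d_{\ccC}(p,p')\le f(p)+C(1+\|p\|)\le 2C(1+\|p\|),
\end{equation*}
that is, $d_{\ccC}(p,p')\le \rho_n(p):=2C(1+\|p\|)/(n-C)$.

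\textbf{Step 3: locally uniform convergence.} This is the main point, namely to get uniformity near each point rather than only pointwise convergence. Fix $p_0$ and $\varepsilon>0$. By continuity of $f$ at $p_0$, choose $r\in(0,1]$ such that $|f(p')-f(p_0)|<\varepsilon$ for all $p'\in B(p_0,2r)$. On $B(p_0,r)$ we have $\|p\|\le\|p_0\|+1$, so $\rho_n(p)\le 2C(2+\|p_0\|)/(n-C)<r$ for all $n\ge N$, with $N$ independent of $p\in B(p_0,r)$. Then, for $p\in B(p_0,r)$ and $n\ge N$, every relevant $p'$ from Step 2 lies in $B(p_0,2r)$. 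For these $p'$,
\begin{equation*}
    f(p')+n\,d_{\ccC}(p,p')\ge f(p')>f(p_0)-\varepsilon>f(p)-2\varepsilon.
\end{equation*}
Together with $f_n\le f$ this gives $0\le f(p)-f_n(p)\le 2\varepsilon$ uniformly on $B(p_0,r)$. Hence $F^n\to F$ locally uniformly on $\ccC_T^d$, which is (1).
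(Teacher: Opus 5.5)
Your proof is correct. It uses the same basic tool as the paper, the Pasch--Hausdorff inf-convolution, but organizes the argument differently.

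The paper's route has three steps. It first truncates, $f^{(n)}=(-n)\vee(f\wedge n)$, so that $h_n=\inf_y\{f^{(n)}(y)+\lambda_n\rho(\cdot,y)\}$ with $\lambda_n/n\to\infty$ is finite for an arbitrary continuous $f$. Truncation destroys any uniform growth bound on the approximants, so the paper restores it by composing with the radial projection $\pi_{C'(1+\sup_{s\le t}\|x(s)\|)}$ for some $C'>C$. It then checks separately that this projection preserves the Lipschitz property and the convergence on compact sets.

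Your route has one step. You feed the linear growth of $F$ and the $1$-Lipschitz property of $p\mapsto\|p\|$ with respect to $d_{\ccC}$ directly into the inf-convolution with slope $n>C$. This gives at once:
finiteness of $f_n$;
the two-sided bound $|f_n|\le C(1+\|p\|)$, so (3) holds with $C'=\sqrt q\,C$ and no projection is needed;
the explicit Lipschitz constant $n$;
and monotonicity $f_n\uparrow f$ as a bonus.

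Your localization of near-minimizers also gives uniform convergence on a ball around every point. This implies the paper's uniform convergence on compact sets, and it is genuinely stronger here because $\ccC_T^d$ is not locally compact.

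The paper's truncation buys only that its scalar approximation step works for continuous functions with no growth assumption at all. For the lemma as stated, your argument is shorter and gives slightly more.
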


\begin{proof}
    Let $(E,\rho)$ be an arbitrary metric space and let $f:E \to \R$ be continuous.
    It is now well-known that there exists Lipschitz functions $h_n:E\to\R$ such that $h_n\to f$ locally uniformly on $E$. Nevertheless, we prove it here, since with the construction of those functions $h_n$ we will be able to show the properties (1)-(3).
    For $n \in \N$, define the truncation
    \begin{equation*}
        f^{(n)} := (-n) \vee (f\wedge n),
    \end{equation*}
    and set, with $\lambda_n$ such that $\frac{\lambda_n}{n} \to \infty$ as $n \to \infty$,
    \begin{equation*}
        h_n(e) := \inf_{y\in E}\left\{f^{(n)}(y) + \lambda_n\rho(e,y)\right\},
        \quad e\in E.
    \end{equation*}
    Since $f^{(n)}$ is bounded, $h_n$ is finite. Moreover, for
    $e,e'\in E$, the triangle inequality gives
    \begin{equation*}
        \begin{aligned}
            h_n(e) & \leq f^{(n)}(y)+\lambda_n\rho(e,y)                      \\
             & \leq f^{(n)}(y)+\lambda_n\rho(e,e')+\lambda_n\rho(e',y)
        \end{aligned}
    \end{equation*}
    for every $y\in E$. Taking the infimum over $y$ yields
    \begin{equation*}
        h_n(e) \leq h_n(e')+\lambda_n \rho(e,e')
    \end{equation*}
    and in particular
    \begin{equation*}
        |h_n(e)-h_n(e')| \leq \lambda_n \rho(e,e'),
    \end{equation*}
    so $h_n$ is Lipschitz continuous for each $n \in \N$.

    We now show that $h_n\to f$ locally uniformly. Let $K\subset E$ be compact and
    let $\varepsilon>0$. Since $f$ is continuous, it is bounded on $K$. Moreover,
    by compactness of $K$, there exists $\delta>0$ such that
    \begin{equation*}
        e\in K,\quad \rho(e,y)<\delta \quad \Rightarrow \quad |f(y)-f(e)|<\varepsilon .
    \end{equation*}
    Enlarging $n$ if necessary, we may assume that $f^{(n)}=f$ on the
    $\delta$-neighbourhood of $K$. For $e\in K$, we always have
    \begin{equation*}
        h_n(e)\leq f^{(n)}(e)=f(e).
    \end{equation*}
    On the other hand, if $\rho(e,y)<\delta$, then
    \begin{equation*}
        f^{(n)}(y)+\lambda_n\rho(e,y) = f(y)+\lambda_n\rho(e,y) \geq f(e)-\varepsilon.
    \end{equation*}
    If $\rho(e,y) \geq \delta$, then
    \begin{equation*}
        f^{(n)}(y)+\lambda_n\rho(e,y) \geq -n+\lambda_n\delta.
    \end{equation*}
    Since $ \lim_{n \to \infty} \lambda_n/n = \infty$, the latter quantity tends to $+\infty$. Hence, for all sufficiently large $n$,
    \begin{equation*}
        f^{(n)}(y)+\lambda_n\rho(e,y)\geq f(e)-\varepsilon
    \end{equation*}
    uniformly in $e\in K$ and $y\in E$. Taking the infimum over $y$ yields
    \begin{equation*}
        f(e)-\varepsilon\leq h_n(e)\leq f(e), \quad e\in K,
    \end{equation*}
    for all sufficiently large $n$. Hence, $h_n\to f$ uniformly on $K$.

    Applying this scalar approximation componentwise to $F=(F_1,\dots,F_q)$, we
    obtain Lipschitz continuous functions $H^n:E\to\R^q$
    such that $H^n\to F$
    locally uniformly on $E=\ccC_T^d$.

    The functions $H^n$ do not necessarily satisfy a uniform linear growth estimate.
    We now enforce such an estimate by projection. Choose $C'>C$. For $r\ge0$, let
    $\pi_r:\R^q\to\R^q$ denote the projection onto the closed
    Euclidean ball $\overline{B_r(0)}$, i.e.
    \begin{equation*}
        \pi_r(z) :=
        \begin{cases}
            z,                   & \|z\|\leq r, \\
            r\,\dfrac{z}{\|z\|}, & \|z\|>r .
        \end{cases}
    \end{equation*}
    Define for $(t,x) \in \ccC_T^d$,
    \begin{equation*}
        F^n(t,x) = \pi_{C'(1+\sup_{s\in[0,t]}\|x(s)\|)} \bigl(H^n(t,x)\bigr).
    \end{equation*}
    Then, by construction,
    \begin{equation*}
        \|F^n(t,x)\| \leq C'\left(1+\sup_{s\in[0,t]}\|x(s)\|\right), \quad (t,x) \in \ccC_T^d.
    \end{equation*}
    Next we show that $F^n$ is Lipschitz. We use the elementary estimate
    \begin{equation*}
        \|\pi_r(z)-\pi_s(w)\| \leq \|z-w\|+|r-s|, \quad r,s \geq 0,\ z,w \in \R^q.
    \end{equation*}
    Indeed, for fixed $r$, the map $\pi_r$ is $1$-Lipschitz in $z$, and for
    fixed $w$,
    \begin{equation*}
        \|\pi_r(w)-\pi_s(w)\|\leq |r-s|.
    \end{equation*}
    Thus, if $(t,x), (t',x') \in \ccC_T^d$, then
    \begin{equation*}
        \begin{aligned}
            \|F^n(t,x)-F^n(t',x')\| & \leq \|H^n(t,x)-H^n(t',x')\| + C'|\sup_{s\in[0,t]}\|x(s)\|-\sup_{s\in[0,t']}\|x'(s)\||                                       \\
             & \leq \|H^n(t,x)-H^n(t',x')\| + C' \sup_{u\in[0,T]} \bigl\|x(u\wedge t)-x'(u\wedge t')\bigr\| \\
             & \leq (L_n+C')d_{\ccC_T}((t,x),(t',x')),
        \end{aligned}
    \end{equation*}
    where $L_n$ is a Lipschitz constant of $H^n$. Hence, $F^n$ is Lipschitz
    continuous with respect to $d_{\ccC_T}$.

    It remains to prove local uniform convergence. Let $K\subset \ccC_T^d$ be compact.
    Since $H^n\to F$ uniformly on $K$, we have
    \begin{equation*}
        \sup_{(t,x)\in K} \|H^n(t,x)-F(t,x)\| \to 0.
    \end{equation*}
    Moreover,
    \begin{equation*}
        \|F(t,x)\| \leq C(1+\sup_{s\in[0,t]}\|x(s)\|), \quad (t,x) \in \ccC_T^d.
    \end{equation*}
    Because $C'>C$ and $\sup_{s\in[0,t]}\|x(s)\|\ge0$, the term
    \begin{equation*}
        \bigl(C'-C\bigr)(1+\sup_{s\in[0,t]}\|x(s)\|)
    \end{equation*}
    is bounded from below by $C'-C>0$. Therefore, for all sufficiently large $n$,
    \begin{equation*}
        \|H^n(t,x)\| \leq C'(1+\sup_{s\in[0,t]}\|x(s)\|), \quad (t,x) \in K.
    \end{equation*}
    For such $n$, the projection is the identity on $K$, and hence
    \begin{equation*}
        F^n(t,x) = H^n(t,x), \quad (t,x) \in K.
    \end{equation*}
    Consequently,
    \begin{equation*}
        \sup_{(t,x) \in K}\|F^n(t,x)-F(t,x)\| = \sup_{(t,x) \in K}\|H^n(t,x)-F(t,x)\| \to 0.
    \end{equation*}
    Thus, $F^n\to F$ locally uniformly on $\ccC_T^d$.

    Finally, since $F^n$ is defined on the stopped-path space $\ccC_T^d$, it
    is non-anticipative by construction. If $t\in[0,T]$ is fixed and
    $x,y\in C([0,t],\R^d)$, then
    \begin{equation*}
        d_{\ccC_T}((t,x),(t,y)) = \sup_{s\in[0,t]}\|x(s)-y(s)\|.
    \end{equation*}
    Therefore, Lipschitz continuity with respect to $d_{\ccC_T}$ implies
    \begin{equation*}
        \|F^n(t,x)-F^n(t,y)\| \leq L_n \sup_{s\in[0,t]}\|x(s)-y(s)\|,
    \end{equation*}
    possibly after increasing $L_n$. This proves the claim.
\end{proof}

\begin{theorem}\label{thm:strong-existence}
    Assume that there exist constants $L > 0$ and $C_{LG} > 0$ such that the following conditions hold:
    \begin{enumerate}
        \item[(i)] (uniform Lipschitz condition) For all $(t,x) \in \ccC_T^d$ and $(t,y) \in\ccC_T^d$, we have
              \begin{equation}\label{Lipschitz-condition-existence}
                  \|b(t,x) - b(t,y)\|^2 + \|\sigma(t,x) - \sigma(t,y)\|^2 \leq L \sup_{0 \leq s \leq t}\|x(s)-y(s)\|^2.
              \end{equation}
        \item[(ii)] (linear growth condition) For all $(t,x) \in\ccC_T^d$, it holds that
              \begin{equation}\label{LG-condition-existence}
                  \|b(t,x)\|^2 + \|\sigma(t,x)\|^2 \leq C_{LG}\left(1 + \sup_{s \in [0,t]}\|x(s)\|^2 \right).
              \end{equation}
    \end{enumerate}
    Then the path-dependent SDE \eqref{path_dependent_sde_existence} admits a unique strong solution $X = (X(t))_{t \in [0,T]}$.
    Moreover, this solution satisfies the integrability condition $E\left[ \int_0^T \|X(t)\|^2 dt \right] < \infty$.
\end{theorem}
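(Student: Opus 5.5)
The approach is the classical Picard iteration on the space $\cS^2$ of continuous adapted processes with $E[\sup_{t\le T}\|X(t)\|^2]<\infty$, adapted to non-anticipative coefficients. For an adapted continuous $Y$, the processes $b(s,Y_{[0,s]})$ and $\sigma(s,Y_{[0,s]})$ are progressively measurable, since $b,\sigma$ are Lipschitz (hence continuous) on $\ccC_T^d$ and $s\mapsto (s,Y_{[0,s]})$ is continuous into $(\ccC_T^d,d_{\ccC})$. So the map
\[
\Phi(Y)(t):=X(0)+\int_0^t b(s,Y_{[0,s]})\,ds+\int_0^t\sigma(s,Y_{[0,s]})\,dW(s)
\]
is well defined. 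By the linear growth condition \eqref{LG-condition-existence}, Doob's inequality and Cauchy--Schwarz, we get
\[
E\Bigl[\sup_{u\le t}\|\Phi(Y)(u)\|^2\Bigr]\le C\Bigl(1+\int_0^t E\Bigl[\sup_{r\le s}\|Y(r)\|^2\Bigr]ds\Bigr),
\]
so $\Phi$ maps $\cS^2$ into itself.

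Next comes the contraction estimate. For $Y,Z\in\cS^2$, the Lipschitz condition \eqref{Lipschitz-condition-existence} controls the coefficient differences by $\sup_{r\le s}\|Y(r)-Z(r)\|^2$. Combined with Burkholder--Davis--Gundy (or Doob), this gives
\[
E\Bigl[\sup_{u\le t}\|\Phi(Y)(u)-\Phi(Z)(u)\|^2\Bigr]\le K\int_0^t E\Bigl[\sup_{r\le s}\|Y(r)-Z(r)\|^2\Bigr]ds,\qquad K=2L(T+4).
\]
The right-hand side involves the running supremum, so the path dependence is handled exactly as in the Markovian case; this is the one point where care is needed. Iterating from $X^0\equiv X(0)$ gives
\[
E\Bigl[\sup_{u\le T}\|X^{n+1}(u)-X^n(u)\|^2\Bigr]\le C\,\frac{(KT)^n}{n!}.
\]
Chebyshev's inequality and Borel--Cantelli then yield almost sure uniform convergence to a continuous adapted limit $X\in\cS^2$. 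Passing to the limit in the equation, using the Lipschitz bound in $L^2$ again, shows that $X$ solves \eqref{path_dependent_sde_existence}. The integrability claim $E[\int_0^T\|X(t)\|^2dt]<\infty$ follows from the linear growth bound and Gronwall's lemma applied to $t\mapsto E[\sup_{u\le t}\|X(u)\|^2]$.

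For pathwise uniqueness, take two solutions $X,\tilde X$ and localize with $\tau_N=\inf\{t:\|X(t)\|\vee\|\tilde X(t)\|\ge N\}$, because a priori they need not lie in $\cS^2$. The same estimate gives
\[
f(t)\le K\int_0^t f(s)\,ds,\qquad f(t)=E\Bigl[\sup_{u\le t\wedge\tau_N}\|X-\tilde X\|^2\Bigr].
\]
Gronwall's lemma yields $f\equiv 0$, and letting $N\to\infty$ gives indistinguishability. The main obstacle is minor: checking measurability and adaptedness of $s\mapsto b(s,Y_{[0,s]})$ on the stopped-path space, which follows from the continuity in $d_{\ccC}$ noted in the first step.
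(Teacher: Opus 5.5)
Your proposal is correct and follows essentially the same route as the paper. Both use Picard iteration from $X^0\equiv X(0)$, the factorial bound with constant $2L(T+4)$, Borel--Cantelli for almost sure uniform convergence, and Gronwall for uniqueness. The only difference is that you localize with $\tau_N$, where the paper instead invokes the a priori moment bound of Lemma~\ref{lm:estimate-existence}.

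One small correction: condition (i) is Lipschitz only in the path at fixed $t$, so it does not make $\sigma$ continuous on $\ccC_T^d$. Progressive measurability of $s\mapsto\sigma(s,Y_{[0,s]})$ follows instead from Borel measurability of $\sigma$ on the separable space $(\ccC_T^d,d_{\ccC})$, composed with the continuous adapted map $s\mapsto(s,Y_{[0,s]})$.
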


To prove this theorem we first establish moment and Hölder-regularity bounds for solutions.

\begin{lemma}\label{lm:estimate-existence}
    Assume the linear growth condition \eqref{LG-condition-existence}. Suppose $X = (X(t))_{t \in [0,T]}$ is a solution to equation \eqref{path_dependent_sde_existence}
    with initial condition $X(0)$ being $\ccF_0$-measurable and satisfying $E[\|X(0)\|^{2m}] < \infty$ for some $m \geq 1$. Then,
    \begin{equation}\label{eq:estimate-existence}
        E\left[\sup_{t \in [0,T]} \|X(t)\|^{2m}\right] \leq (1+3E[\|X(0)\|^{2m}])e^{3C_{LG}T(T+4)}.
    \end{equation}
    In particular, $X$ satisfies $E[\int_{0}^{T} \|X(s)\|^{2m} dt] < \infty$.
\end{lemma}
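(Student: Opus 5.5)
The plan is a localized Gronwall argument on $f(t) := 1 + E\bigl[\sup_{s\le t}\|X(s)\|^{2m}\bigr]$. The constants $3$, $T$ and $4$ in \eqref{eq:estimate-existence} suggest their origin. The factor $3$ comes from splitting the solution into three summands. The factor $T$ comes from Cauchy--Schwarz on the $ds$-integral. The factor $4$ is Doob's $L^2$ constant for the stochastic integral.

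\emph{Localization.} To make $f$ finite a priori, I would first work with $X^{\tau_n}$, where $\tau_n := \inf\{t : \|X(t)\|\ge n\}\wedge T$. Since $X(0)$ need not be bounded, I would replace $X(0)$ by $X(0)\Ind_{\{\|X(0)\|\le n\}}$ on the $\ccF_0$-measurable set. I derive the bound for the stopped quantities and then let $n\to\infty$ by monotone convergence. The final claim $E[\int_0^T\|X(s)\|^{2m}ds]\le T\,E[\sup_t\|X(t)\|^{2m}]<\infty$ is then immediate.

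\emph{Key estimate, case $m=1$.} By $(x+y+z)^2\le 3(x^2+y^2+z^2)$,
\begin{equation*}
    \|X(t)\|^2 \le 3\|X(0)\|^2 + 3\Bigl\|\int_0^t b(s,X_{[0,s]})\,ds\Bigr\|^2 + 3\Bigl\|\int_0^t \sigma(s,X_{[0,s]})\,dW(s)\Bigr\|^2 .
\end{equation*}
Take $\sup_{u\le t}$ and expectations. For the drift term, Cauchy--Schwarz gives the bound $t\int_0^t E\|b\|^2\,ds$. For the martingale term, Doob's inequality and the It\^o isometry give $4\int_0^t E\|\sigma\|^2\,ds$. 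Adding these and using \eqref{LG-condition-existence} yields
\begin{equation*}
    E\Bigl[\sup_{u\le t}\|X(u)\|^2\Bigr] \le 3E\|X(0)\|^2 + 3C_{LG}(T+4)\int_0^t f(s)\,ds .
\end{equation*}
Hence $f(t)\le (1+3E\|X(0)\|^2) + 3C_{LG}(T+4)\int_0^t f(s)\,ds$. Gronwall's inequality then gives exactly $f(T)\le (1+3E\|X(0)\|^2)e^{3C_{LG}T(T+4)}$, which is \eqref{eq:estimate-existence} for $m=1$.

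\emph{General $m$, the main obstacle.} For general $m$ I would run the same scheme on $\|X\|^{2m}$. I would use $\|x+y+z\|^{2m}\le 3^{2m-1}(\|x\|^{2m}+\|y\|^{2m}+\|z\|^{2m})$, then H\"older in time for the drift, and BDG (or Doob in $L^{2m}$) together with H\"older for the stochastic integral. With \eqref{LG-condition-existence} raised to the power $m$, this gives a Gronwall inequality of the same linear form for $f$. This step is where I expect the difficulty. The naive route produces $m$-dependent factors, namely $3^{2m-1}$, $T^{2m-1}$, $T^{m-1}$ and the BDG constant, and these do not reduce to the $m$-free constants $3$ and $3C_{LG}(T+4)$ in the statement.

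What I would try first is to organize the estimate on the quadratic scale and then raise to the $m$-th power. That is, I would apply the $m=1$ pathwise inequality to $\sup_u\|X(u)\|^2$ before taking expectations, and then use Jensen/H\"older to pass to the $m$-th moment. The aim is that the three-term split and Doob constant enter only once.

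If that cannot be made to reproduce the exact constant, the argument still delivers a bound of the form $K_1(m)(1+E\|X(0)\|^{2m})e^{K_2(m)C_{LG}T(T+1)}$. That bound suffices for every later use of the lemma, in Lemma~\ref{lm:tightness} and Proposition~\ref{prop:pd-vol-exp-affine-formula}, where only uniformity in the coefficients with a common $C_{LG}$ matters. In that case I would state the constant accordingly, noting that the displayed form is sharp as written for $m=1$.
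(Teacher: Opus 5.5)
For $m=1$ your argument is exactly the paper's. You localize at $\tau_n$, split into three terms, use Cauchy--Schwarz for the drift and Doob for the stochastic integral to get $f(t)\le 1+3E\|X(0)\|^2+3C_{LG}(T+4)\int_0^t f(s)\,ds$, then apply Gronwall and let $n\to\infty$. Truncating $X(0)$ is unnecessary, since $\sup_t\|X(t\wedge\tau_n)\|\le\max(n,\|X(0)\|)$ is already in $L^{2m}$. If you keep it, multiply the whole stopped process by the $\mathcal{F}_0$-measurable indicator; changing only the initial value would break the equation.

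For general $m$ the paper's proof asserts the same inequality for $2m$-th moments, with the same constants $3$ and $3C_{LG}(T+4)$, citing H\"older, Doob and linear growth. Your suspicion about this step is justified. In fact \eqref{eq:estimate-existence} is false for every $m\ge 2$, so no reorganization (quadratic scale, Jensen or otherwise) can recover it. Take $d=1$, $\sigma\equiv 0$ and $b(t,x)=\beta x(t)$ with $\beta>0$. Then \eqref{LG-condition-existence}, and even the Lipschitz condition, holds with $C_{LG}=\beta^2$, and for deterministic $X(0)=x_0>0$ the solution is $X(t)=x_0e^{\beta t}$. The claimed bound reads $x_0^{2m}e^{2m\beta T}\le(1+3x_0^{2m})e^{3\beta^2T(T+4)}$. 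Let $x_0\to\infty$, then put $\beta T=m/3$ and let $T\to\infty$: the bound would force $e^{m^2/3}\le 3$, which fails for $m\ge 2$. Concretely, $m=2$, $T=100$, $\beta=1/150$, $x_0=2$ give $16e^{8/3}\approx 230$ on the left and $49e^{1.387}\approx 196$ on the right. So the displayed constants are right only for $m=1$. Your fallback is the correct repair of the lemma, not a concession.

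One correction to that fallback. The naive route (H\"older and BDG applied to $1+E[\sup_{s\le t}\|X(s)\|^{2m}]$, then Gronwall) gives an exponent of order $K(m)C_{LG}^m(T^{2m}+T^m)$, which is not linear in $C_{LG}$. Your form $K_1(m)(1+E\|X(0)\|^{2m})e^{K_2(m)C_{LG}T(T+1)}$ does follow from your quadratic-scale idea if Minkowski replaces Jensen. Set $g(t):=\bigl(E[\sup_{u\le t}\|X(u)\|^{2m}]\bigr)^{1/(2m)}$ and $g_0:=(E\|X(0)\|^{2m})^{1/(2m)}$. Minkowski in $L^{2m}$, BDG, and Minkowski's integral inequality in $L^m$ for $\int_0^t\|\sigma\|^2\,ds$ yield $1+g(t)\le 1+g_0+\sqrt{C_{LG}}\int_0^t(1+g)\,ds+C_{2m}\sqrt{C_{LG}}\bigl(\int_0^t(1+g)^2\,ds\bigr)^{1/2}$, where $C_{2m}$ is the $L^{2m}$ BDG constant ($C_2=2$ by Doob). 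Squaring, $h:=(1+g)^2$ satisfies $h(t)\le 3h(0)+3C_{LG}(T+C_{2m}^2)\int_0^t h(s)\,ds$. Hence $E[\sup_t\|X(t)\|^{2m}]\le h(T)^m\le 3^m(1+g_0)^{2m}e^{3mC_{LG}T(T+C_{2m}^2)}$. Either version suffices for Lemma~\ref{lm:hoelder-continuity}, Lemma~\ref{lm:tightness} and Proposition~\ref{prop:pd-vol-exp-affine-formula}, which only need a bound depending on $m$, $T$, $C_{LG}$ and $E\|X(0)\|^{2m}$.
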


\begin{proof}
    For each $n \geq 1$, define the stopping time
    $$\tau_n =  \inf \{t \in [0,T] : \|X(t)\| \geq n\} \wedge T,$$
    and consider the truncated process $X^n(t) = X(t \wedge \tau_n)$ for $t \in [0,T]$. Then $X^n$ satisfies the SDE
    \begin{equation*}
        X^n(t) = X(0) + \int_{0}^{t} b(s, X^n_{[0,s]}) \Ind_{\{s \leq \tau_n\}} \, ds + \int_{0}^{t} \sigma(s,X^n_{[0,s]}) \Ind_{\{s \leq \tau_n\}} \, dW(s), \quad t \in [0,T].
    \end{equation*}
    Applying Hölder's inequality, Doob's martingale inequality and the linear growth condition \eqref{LG-condition-existence}, yields
    \begin{equation*}
        E\left[\sup_{0 \leq s \leq t} \|X^n(s)\|^{2m}\right] \leq 3 E\left[\|X(0)\|^{2m}\right] + 3C_{LG}(T+4)\int_{0}^{t} \left(1+E\left[\sup_{0 \leq r \leq s} \|X^n(r)\|^{2m}\right]\right) \, ds.
    \end{equation*}
    Applying Gronwall’s inequality to this bound gives
    \begin{equation*}
        1 + E\left[\sup_{0 \leq s \leq T} \|X^n(s)\|^{2m}\right] \leq \left(1 + 3E\left[\|X(0)\|^{2m}\right]\right)e^{3C_{LG}T(T+4)}.
    \end{equation*}
    Since $X^n(t) = X(t)$ for all $t \leq \tau_n$, we conclude
    \begin{equation*}
        E\left[\sup_{0 \leq s \leq \tau_n} \|X(s)\|^{2m}\right] \leq \left(1 + 3E\left[\|X(0)\|^{2m}\right]\right)e^{3C_{LG}T(T+4)}.
    \end{equation*}
    Finally, letting $n \to \infty$ and applying monotone convergence yields the desired estimate \eqref{eq:estimate-existence}.
    The integrability of $\int_0^T \|X(s)\|^{2m} ds$ follows by the bound on the supremum.
\end{proof}

\begin{lemma}\label{lm:hoelder-continuity}
    Assume that the linear growth condition \eqref{LG-condition-existence} holds and let $X = (X(t))_{t \in [0,T]}$ be a solution to \eqref{path_dependent_sde_existence}
    with $X(0)$ a $\ccF_0$-measurable random variable such that $E[\|X(0)\|^{2m}] < \infty$ for some $m \geq 1$.
    Then, for any $0 \leq s \leq t \leq T$, there exists a constant $C > 0$ depending only on $m, T, C_{LG}$ such that
    \begin{equation}\label{eq:diff-bound-in-expec}
        E\left[\|X(t)-X(s)\|^{2m}\right] \leq C(1+3E[\|X(0)\|^{2m}])(t-s)^m.
    \end{equation}
    Moreover, $X$ admits a version (which we again denote by $X$) which is Hölder continuous of any order $\alpha \in [0, \frac{1}{2} - \frac{1}{2m})$.
    For each such $\alpha$, there exists a constant $c > 0$ depending only on $m, T, C_{LG}, E[\|X(0)\|^{2m}]$ such that
    \begin{equation}\label{eq:hoelder-bound}
        E\left[\left(\sup_{0 \leq s < t \leq T} \frac{\|X(t)-X(s)\|}{|t-s|^\alpha}\right)^{2m}\right] \leq c.
    \end{equation}
\end{lemma}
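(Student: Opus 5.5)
The plan is to split $X(t)-X(s)$ into its drift and martingale parts,
\[
X(t)-X(s) = \int_s^t b(r,X_{[0,r]})\,dr + \int_s^t \sigma(r,X_{[0,r]})\,dW(r),
\]
and to bound each part in $L^{2m}$ using the moment estimate of Lemma~\ref{lm:estimate-existence}. The Hölder statement then follows from the Kolmogorov--Chentsov continuity theorem in its quantitative (Garsia--Rodemich--Rumsey) form.

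\emph{Step 1: increment bound.} By the elementary inequality $\|u+v\|^{2m}\le 2^{2m-1}(\|u\|^{2m}+\|v\|^{2m})$ it suffices to treat the two integrals separately.

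For the drift, Hölder's inequality and the linear growth condition \eqref{LG-condition-existence} give
\[
E\Bigl\|\int_s^t b\,dr\Bigr\|^{2m} \le (t-s)^{2m-1}\int_s^t E\|b(r,X_{[0,r]})\|^{2m}\,dr \le C_{LG}^m 2^{m-1}(t-s)^{2m}\Bigl(1+E\bigl[\sup_{r\le T}\|X(r)\|^{2m}\bigr]\Bigr).
\]

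For the stochastic integral, the Burkholder--Davis--Gundy inequality followed by Hölder's inequality in time gives
\[
E\Bigl\|\int_s^t \sigma\,dW\Bigr\|^{2m} \le c_m (t-s)^{m-1}\int_s^t E\|\sigma(r,X_{[0,r]})\|^{2m}\,dr \le c'_m (t-s)^{m}\Bigl(1+E\bigl[\sup_{r\le T}\|X(r)\|^{2m}\bigr]\Bigr).
\]

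Now insert \eqref{eq:estimate-existence} and use $(t-s)^{2m}\le T^m (t-s)^m$. This yields \eqref{eq:diff-bound-in-expec} with a constant depending only on $m,T,C_{LG}$. The factor $(1+3E\|X(0)\|^{2m})$ appears after absorbing the exponential factor into $C$.

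\emph{Step 2: Hölder regularity.} The bound $E\|X(t)-X(s)\|^{2m}\le K|t-s|^{1+(m-1)}$ is exactly the hypothesis of Kolmogorov's criterion with exponent $p=2m$ and excess $\beta=m-1$. It therefore yields a version that is Hölder continuous of every order $\alpha<\beta/p=\tfrac12-\tfrac1{2m}$. Since $X$ is already continuous, this version coincides with $X$ up to indistinguishability.

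\emph{Step 3: moment bound for the seminorm.} For \eqref{eq:hoelder-bound} I would use the quantitative form of Kolmogorov's theorem: the dyadic chaining argument, or Garsia--Rodemich--Rumsey with $\Psi(u)=u^{2m}$ and $p(u)=u^{\gamma}$ for a suitable $\gamma$, gives
\[
E\bigl[|X|_{C^\alpha}^{2m}\bigr]\le C_{\alpha,m,T}\,K.
\]
Here $K$ is the constant from Step 1, so $c$ depends only on $m,T,C_{LG},E\|X(0)\|^{2m}$, as claimed.

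The main obstacle is Step 3. The usual statement of Kolmogorov's theorem gives only almost sure finiteness of the seminorm, so I would carry out the chaining explicitly:
\[
|X|_{C^\alpha}\le C\sum_n 2^{n\alpha}\max_k\|X(k2^{-n}T)-X((k-1)2^{-n}T)\|.
\]
Bounding the maximum by the $\ell^{2m}$-sum over $k$, the $n$-th term has $L^{2m}$ norm of order $2^{n(\alpha+1/(2m)-1/2)}$ by Step 1. This series is summable precisely when $\alpha<\tfrac12-\tfrac1{2m}$, which gives \eqref{eq:hoelder-bound}. For $\alpha=0$ the same argument, or Lemma~\ref{lm:estimate-existence} directly, covers the sup-norm case used elsewhere in the paper.
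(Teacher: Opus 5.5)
Your proof is correct and follows the paper's argument: the same drift/martingale split, with H\"older/Jensen, BDG, the linear growth bound and Lemma~\ref{lm:estimate-existence} giving the increment estimate, followed by Kolmogorov's criterion with exponent $2m$ and excess $m-1$. The only difference is in Step~3. The paper takes the moment bound \eqref{eq:hoelder-bound} directly from the version of Kolmogorov's theorem it cites (\cite[Theorem I.2.1]{revuz_Continuous_1999}), which already asserts $L^{2m}$-integrability of the H\"older seminorm with a constant proportional to the increment constant. Your explicit dyadic chaining reproves this, correctly and in a self-contained way, so it is a valid substitute rather than a needed extra step.
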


\begin{proof}
    We begin by establishing the moment estimate \eqref{eq:diff-bound-in-expec}. Using the integral representation of $X$, we get
    \begin{equation*}
        E\left[\|X(t) - X(s)\|^{2m}\right] \leq 2^{2m-1} \left(E\left[\left\| \int_{s}^{t} b(u,X_{[0,u]}) \, du \right\|^{2m} \right] + E\left[\left\| \int_{s}^{t} \sigma(u,X_{[0,u]}) \, dW(u) \right\|^{2m}\right]\right).
    \end{equation*}
    To estimate the drift term, we apply Jensen's inequality and the linear growth condition \eqref{LG-condition-existence} to get
    \begin{align*}
        E\left[\left\| \int_{s}^{t} b(u,X_{[0,u]}) \, du \right\|^{2m} \right] & \leq (t-s)^{2m-1} \int_{s}^{t} E\left[\|b(u,X_{[0,u]})\|^{2m}\right] du                                           \\
                                                                               & \leq C_{LG}^{2m} (t-s)^{2m-1} \int_{s}^{t} \left(1+ E\left[\sup_{0 \leq r \leq u} \|X(r)\|^{2m}\right]\right) du.
    \end{align*}
    For the diffusion term we apply the BDG inequality, Jensen's inequality and the linear growth condition \eqref{LG-condition-existence}
    \begin{align*}
        E\left[\left\| \int_{s}^{t} \sigma(u,X_{[0,u]}) \, dW(u) \right\|^{2m}\right] & \leq C_m E\left[ \left( \int_{s}^{t} \left\|\sigma(u,X_{[0,u]})\right\|^{2} du \right)^m\right]                     \\
                                                                                      & \leq C_m (t-s)^{m-1} \int_{s}^{t} E\left[\|\sigma(u, X_{[0,u]})\|^{2m}\right] du                                    \\
                                                                                      & \leq C_{LG}^{2m}C_m (t-s)^{m-1} \int_{s}^{t} \left(1+ E\left[\sup_{0 \leq r \leq u} \|X(r)\|^{2m}\right]\right) du.
    \end{align*}
    Combining both drift and diffusion term, we get
    \begin{align*}
        E\left[\|X(t) - X(s)\|^{2m}\right] & \leq C_1(t-s)^{2m-1} \int_{s}^{t} \left(1+ E\left[\sup_{0 \leq r \leq u} \|X(r)\|^{2m}\right]\right) du     \\
                                           & \quad + C_2 (t-s)^{m-1} \int_{s}^{t} \left(1+ E\left[\sup_{0 \leq r \leq u} \|X(r)\|^{2m}\right]\right) du,
    \end{align*}
    where $C_1, C_2$ are positive constants depending only on $m$ and $C_{LG}$. Using now the bound obtained in Lemma \ref{lm:estimate-existence} we get
    \begin{align*}
        E\left[\|X(t) - X(s)\|^{2m}\right] & \leq \left( \widetilde{C}_1(t-s)^{2m-1} + \widetilde{C}_2(t-s)^{m-1}\right)(t-s)(1+3E[\|X(0)\|^{2m}]) \\
                                           & \leq \left( \widetilde{C}_1(t-s)^{2m} + \widetilde{C}_2(t-s)^{m}\right)(1+3E[\|X(0)\|^{2m}])          \\
                                           & \leq C(t-s)^m(1+3E[\|X(0)\|^{2m}]),
    \end{align*}
    where $\widetilde{C}_1, \widetilde{C}_2$ depend on $m,T,C_{LG}$ and thus also the positive constant $C$.

    The existence of a Hölder-continuous version and the bound \eqref{eq:hoelder-bound} now follow from the Kolmogorov continuity theorem; see e.g.\ \cite[Theorem I.2.1]{revuz_Continuous_1999}.
\end{proof}

\begin{proof}[Proof of Theorem \ref{thm:strong-existence}]
    Uniqueness:
    Let $X(t)$ and $\bar{X}(t)$ be two strong solutions of \eqref{path_dependent_sde_existence}. By Lemma \ref{lm:estimate-existence},
    both processes satisfy
    $$E\left[\int_{0}^{T} \|X(s)\|^2 dt\right] < \infty, \quad E\left[\int_{0}^{T} \|\bar{X}(s)\|^2 dt\right] < \infty.$$

    Noting further
    $$X(t) - \bar{X}(t) = \int_{0}^t \left[b(s, X_{[0,s]}) - b(s, \bar{X}_{[0,s]})\right] ds
        + \int_{0}^t \left[\sigma(s,X_{[0,s]}) - \sigma(s,\bar{X}_{[0,s]})\right] dW(s),$$
    we can thus show using the uniform Lipschitz condition \eqref{Lipschitz-condition-existence}
    $$E\left[\sup_{0 \leq s \leq t} \|X(s) - \bar{X}(s)\|^2\right]
        \leq 2L(t + 4) \int_{0}^t E\left[\sup_{0 \leq u \leq s} \|X(u) - \bar{X}(u)\|^2\right] ds.$$
    An application of Gronwall’s inequality yields
    $$E \left[ \sup_{0 \leq s \leq t} \|X(s) - \bar{X}(s)\|^2 \right] = 0,$$
    implying $X(t) = \bar{X}(t)$ almost surely for all $t \in [0,T]$. This proves uniqueness.

    Existence:
    We construct the solution via Picard iteration. Let $X^0(t) := X(0)$ for all $t \in [0,T]$. For each $n \in \N$, define
    \begin{equation}\label{eq:picard-iteration}
        X^{n}(t) := X(0) + \int_0^t b(s, X^{n-1}_{[0,s]}) \, ds + \int_0^t \sigma(s, X^{n-1}_{[0,s]}) \, dW(s), \quad t \in [0,T].
    \end{equation}
    It is now straightforward to show that $X^n(\cdot)$ satisfies $E[\int_{0}^{T} \|X^n(s)\|^2 dt] < \infty$.
    We first show that the sequence $(X^n)_{n \geq 0}$ is Cauchy in $L^2$ for all $n \in \N$, i.e.
    \begin{equation}\label{eq:existence-difference}
        E\left[\sup_{0 \leq s \leq t} \|X^{n+1}(s)-X^n(s)\|^2\right] \leq \frac{K(Mt)^n}{n!}, \quad \text{for } t \in [0,T],
    \end{equation}
    by induction, where $M := 2L(t+4)$ and $K := 2C_{LG} T(T+4)(1 + E[\|X(0)\|^2])$.
    For $n = 0$, we compute directly
    \begin{align*}
        X^1(t) - X^0(t) & = \int_0^t b(s, X^0_{[0,s]}) \, ds + \int_0^t \sigma(s, X^0_{[0,s]}) \, dW(s) \\
                        & = \int_0^t b(s, X(0)) \, ds + \int_0^t \sigma(s, X(0)) \, dW(s).
    \end{align*}
    Jensen's inequality, the BDG inequality and the linear growth condition now yields
    \begin{align*}
        E\left[ \sup_{0 \leq s \leq t} \|X^1(s) - X^0(s)\|^2 \right]
         & \leq 2 E \left[ \left\| \int_0^t b(s, X(0)) \, ds \right\|^2 \right] + 2 E \left[ \sup_{0 \leq s \leq t} \left\| \int_0^t \sigma(s, X(0)) \, dW(s) \right\|^2 \right] \\
         & \leq 2t \int_0^t E[\|b(s, X(0))\|^2] ds + 8 \int_0^t E[\|\sigma(s, X(0))\|^2] ds                                                                                      \\
         & \leq K.
    \end{align*}
    Now assume that \eqref{eq:existence-difference} holds for some $n \geq 0$. Then
    \begin{align*}
        E\left[ \sup_{0 \leq s \leq t} \|X^{n+2}(s) - X^{n+1}(s)\|^2 \right]
         & \leq M \int_0^t E \left[ \sup_{0 \leq r \leq s} \|X^{n+1}(r) - X^n(r)\|^2 \right] ds \\
         & \leq M \int_0^t \frac{K(Ms)^n}{n!} ds = \frac{K (Mt)^{n+1}}{(n+1)!},
    \end{align*}
    completing the induction.
    From this, using Markov's inequality and Borel--Cantelli, we get almost sure uniform convergence of the series
    $$X^n(t) = X^0(t) + \sum_{i=0}^{n-1} \left(X^{i+1}(t) - X^i(t)\right).$$
    Define $X(t) := \lim_{n \to \infty} X^n(t)$. Then $X$ is continuous and adapted, and the bound \eqref{eq:existence-difference} ensures convergence in $L^2(\Omega)$ for each $t \in [0,T]$. Moreover,
    $$E \left[ \int_0^T \|X(t)\|^2 dt \right] < \infty.$$
    It remains to show that $X$ satisfies the integral equation \eqref{path_dependent_sde_existence}. Using Lipschitz continuity and dominated convergence, we note that
    \begin{align*}
         & E\left[ \left\|\int_{0}^{t} b(s,X^n_{[0,s]}) \, ds - \int_{0}^{t} b(s,X_{[0,s]}) \, ds\right\|^2\right]                     \\
         & + E\left[ \left\|\int_{0}^{t} \sigma(s,X^n_{[0,s]}) \, dW(s) - \int_{0}^{t} \sigma(s,X_{[0,s]}) \, dW(s)\right\|^2\right]   \\
         & \leq L(T+1) \int_{0}^{T} E\left[ \sup_{0 \leq s \leq T}\|X^n(s) - X(s)\|^2\right] ds \to 0, \quad \text{ as } n \to \infty.
    \end{align*}
    Hence, passing to the limit in \eqref{eq:picard-iteration} yields
    $$X(t) = X(0) + \int_0^t b(s, X_{[0,s]}) \, ds + \int_0^t \sigma(s, X_{[0,s]}) \, dW(s), \quad t \in [0,T].$$
    Thus, $X$ is a strong solution of \eqref{path_dependent_sde_existence}. This completes the proof.
\end{proof}

To establish existence results we follow the same path as in the Volterra case shown in \cite{abijaber_Affine_2019}.
Fix $\mu \in \ccM([0,T], \C^d)$ and $p : \{(t,s,x) \in \R_{\geq 0}^2 \times \C^d : 0 \leq t \leq s < \infty \} \to \C^d$ and consider the Riccati-type equation
\begin{equation}\label{eq:def-basline-existence}
    \psi(t) = \mu([t,T]) + \int_{t}^{T} p(t,s,\psi(s)) \, ds, \quad t \in [0,T].
\end{equation}
A \emph{noncontinuable solution} of \eqref{eq:def-basline-existence} is a pair $(\psi,T_{\min})$ with $T_{\min} \in [0,T)$ and $\psi \in L^2((T_{\min},T],\C^d)$, such that $\psi$ satisfies \eqref{eq:def-basline-existence} on $(T_{\min},T]$ and $\|\psi \|_{L^2(T_{\min},T)} = \infty$ if $T_{\text{min}} > 0$. If $T_{\min} = 0$ we call $\psi$ a global solution of \eqref{eq:def-basline-existence}. We call a noncontinuable solution $(\psi, T_{\min})$ \emph{unique} if for any $\tau \in [0,T]$ and $\widetilde{\psi} \in L^2([\tau,T],\C^d)$ satisfying \eqref{eq:def-basline-existence} on $[\tau,T]$, we have $T_{\min} < \tau$ and $\psi = \widetilde{\psi}$ on $[\tau,T]$.

\begin{theorem}\label{thm:existence-of-noncontinuable-riccati}
    Let $T \in \R_{\geq 0}$.
    Assume that $\mu \in \ccM([0,T], \C^d)$, and
    $P_T \in L^1([0,T],\R_{\geq 0})$ such that
    \begin{equation}\label{eq:riccati-zero-growth-condition}
        |p(t,s,0)| \leq P_T(s), \quad 0\leq t\leq s\leq T,
    \end{equation}
    and that there exists a positive constant $\Theta_T$ and a function $\Pi_T \in L^2([0,T], \R_{\geq 0})$ such that
    \begin{equation}\label{eq:riccati-growth-condition}
        |p(t,s,x)-p(t,s,y)| \leq \Pi_T(s)|x-y| + \Theta_T|x-y|(|x|+|y|), \quad x,y \in \C^d, 0 \leq t \leq s \leq T.
    \end{equation}
    Then the Riccati-type equation \eqref{eq:def-basline-existence} has a unique noncontinuable solution $(\psi, T_{\text{min}})$. If $\mu$ and $p$ are real-valued, then so is $\psi$.
\end{theorem}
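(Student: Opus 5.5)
We treat \eqref{eq:def-basline-existence} as a fixed-point problem in $L^2$ and follow the Volterra strategy of \cite{abijaber_Affine_2019}: first local existence and uniqueness near the terminal time $T$ by a contraction argument, then continuation backwards in time, and finally identification of a maximal interval $(T_{\min},T]$.

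\emph{Local step.} For $\tau<T$, define on $L^2([\tau,T],\C^d)$ the map
\begin{equation*}
(\cT\psi)(t)=\mu([t,T])+\int_t^T p(t,s,\psi(s))\,ds .
\end{equation*}
Since $t\mapsto\mu([t,T])$ is bounded by $\|\mu\|_{TV}$, this term belongs to $L^2$. Using \eqref{eq:riccati-zero-growth-condition} and \eqref{eq:riccati-growth-condition} with $y=0$, we get
\begin{equation*}
|p(t,s,x)|\le P_T(s)+\Pi_T(s)|x|+\Theta_T|x|^2 .
\end{equation*}
By Cauchy--Schwarz, $\int_t^T|p(t,s,\psi(s))|\,ds\le \|P_T\|_{L^1}+\|\Pi_T\|_{L^2}\|\psi\|_{L^2}+\Theta_T\|\psi\|_{L^2}^2$. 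The bound is uniform in $t$, so $\cT\psi\in L^\infty\subset L^2$. For two elements $\psi,\tilde\psi$ of a ball of radius $R$, condition \eqref{eq:riccati-growth-condition} and Cauchy--Schwarz give
\begin{equation*}
\|\cT\psi-\cT\tilde\psi\|_{L^2(\tau,T)}\le (T-\tau)^{1/2}\bigl(\|\Pi_T\|_{L^2(\tau,T)}+2R\,\Theta_T\bigr)\|\psi-\tilde\psi\|_{L^2(\tau,T)} .
\end{equation*}
The self-map estimate has the same shape, with the constants $\|P_T\|_{L^1(\tau,T)}$ and $\|\mu\|_{TV}$ added. Choose $R$ larger than about $2\|\mu\|_{TV}\sqrt T+1$ and then $\tau$ close to $T$. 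The terms $\|\Pi_T\|_{L^2(\tau,T)}$ and $\|P_T\|_{L^1(\tau,T)}$ become small, so $\cT$ is a contraction of the ball into itself, and Banach's fixed point theorem gives a unique solution on $[\tau,T]$.

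\emph{Continuation.} Suppose $\psi$ is known on $[\tau,T]$. On $[\tau',\tau]$ the equation becomes
\begin{equation*}
\psi(t)=h(t)+\int_t^\tau p(t,s,\psi(s))\,ds, \qquad h(t):=\mu([t,T])+\int_\tau^T p(t,s,\psi(s))\,ds .
\end{equation*}
The function $h$ is bounded by the estimate above, uniformly in $t$, because $p$ depends on $t$ only through the uniform bounds. Hence the same contraction applies with $\mu([t,T])$ replaced by $h$.

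This is the step I expect to be the main obstacle. In the Volterra setting the forcing term $h$ depends on $t$ through the kernel, so the restart is not literally the original equation. The contraction argument needs only $h\in L^\infty$ together with the $t$-uniform constants $P_T,\Pi_T,\Theta_T$, and both are available here. One must also check that uniqueness persists for every partial solution. This follows from the same Lipschitz estimate applied to the difference of two solutions on short intervals, going backwards step by step.

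\emph{Maximal interval.} Set $T_{\min}=\inf\{\tau:\text{a solution exists on }[\tau,T]\}$. Local uniqueness makes the solutions consistent, so they glue together on $(T_{\min},T]$. Suppose $T_{\min}>0$ but $\|\psi\|_{L^2(T_{\min},T)}<\infty$. Then $h$ computed at $\tau=T_{\min}$ is still bounded. The local step length depends only on $R$ and the integrability moduli of $P_T,\Pi_T$, so the solution extends past $T_{\min}$, which contradicts the definition of $T_{\min}$. This gives the blow-up alternative and the uniqueness notion stated before the theorem.

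Finally, if $\mu$ and $p$ are real-valued, the contraction can be run in the closed subspace of real-valued functions. The unique fixed point therefore lies in that subspace, so $\psi$ is real.
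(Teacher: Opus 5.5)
Your proposal is correct and follows essentially the same route as the paper: a Banach fixed-point argument on a small $L^2$-ball over $[T-\rho,T]$, then backward continuation with the forcing term $h(t)=\mu([t,T])+\int_\tau^T p(t,s,\psi(s))\,ds$, and finally uniqueness via the Lipschitz estimate applied on short intervals. Your explicit argument for the blow-up alternative when $T_{\min}>0$ (restarting from $\tau=T_{\min}$ if $\|\psi\|_{L^2(T_{\min},T)}<\infty$) fills in a step that the paper only asserts.
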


\begin{proof}
    We focus only on the complex-valued case. The real-valued case follows by replacing $\C^d$ with $\R^d$ below.
    We first prove that a solution exists for small times.
    Let $\rho \in (0,T]$ and $\varepsilon > 0$ be constants and define
    $$B_{\rho, \varepsilon} = \{\psi \in L^2([T-\rho,T], \C^d) : \|\psi\|_{L^2(T-\rho,T)} \leq \varepsilon\}.$$
    Consider the operator $F$ acting on elements $\psi \in B_{\rho, \varepsilon}$ by
    \begin{equation*}
        (F\psi)(t) = \mu([t,T]) + \int_{t}^{T} p(t,s,\psi(s)) \, ds, \quad t \in [T-\rho,T].
    \end{equation*}
    Let $M_{\rho} = \sup_{t \in [T-\rho,T]} |\mu([t,T])|$. Since $\mu$ has finite total variation we have $M_{\rho} < \infty$.
    Using \eqref{eq:riccati-zero-growth-condition} and \eqref{eq:riccati-growth-condition} we get for $\psi \in B_{\rho, \varepsilon}$ and $t \in [T-\rho,T]$ with the Cauchy--Schwarz inequality
    \begin{align*}
        \left| \int_{t}^{T} p(t,s,\psi(s)) \, ds \right| & \leq \int_{t}^{T} |p(t,s,0)| \, ds + \int_{t}^{T} |p(t,s,\psi(s))-p(t,s,0)| \,ds                                                   \\
                                                         & \leq  \int_{T-\rho}^{T} P_T(s) \, ds + \int_{T-\rho}^{T} \Pi_{T}(s) |\psi(s)| \, ds + \Theta_T \int_{T-\rho}^{T} |\psi(s)|^2 \, ds \\
                                                         & \leq \|P_T\|_{L^1(T-\rho,T)} + \|\Pi_{T}\|_{L^2(0,T)}\|\psi\|_{L^2(T-\rho,T)} + \Theta_T\|\psi\|_{L^2(T-\rho,T)}^2                 \\
                                                         & \leq \|P_T\|_{L^1(T-\rho,T)} + \|\Pi_{T}\|_{L^2(0,T)}\varepsilon + \Theta_T \varepsilon^2.
    \end{align*}
    This now implies for $\psi \in B_{\rho, \varepsilon}$
    \begin{align}\label{eq:L^2-solution}
        \begin{split}
            \|F(\psi)\|_{L^2(T-\rho,T)} & = \left(\int_{T-\rho}^{T} |(F\psi)(s)|^2 ds\right)^\frac{1}{2}                                                                \\
            %   & \leq \left(\int_{T-\rho}^{T} \left(M_{\rho} + \|\Pi_{T}\|_{L^2}\varepsilon + \Theta_T \varepsilon^2\right)^2 ds\right)^\frac{1}{2} \\
                                        & \leq \sqrt{\rho}\left(M_{\rho} + \|P_T\|_{L^1(T-\rho,T)} + \|\Pi_{T}\|_{L^2(0,T)}\varepsilon + \Theta_T \varepsilon^2\right).
        \end{split}
    \end{align}
    Similarly, for $\psi, \widetilde{\psi} \in B_{\rho, \varepsilon}$ and every $t \in [T-\rho,T]$
    \begin{align*}
        |(F\psi)(t) - (F \widetilde{\psi})(t)| & \leq \int_{T-\rho}^{T} \Pi_{T}(s) |\psi(s) - \widetilde{\psi}(s)| \, ds + \Theta_{T} \int_{T-\rho}^{T} |\psi(s) - \widetilde{\psi}(s)|\bigl(|\psi(s)|+|\widetilde{\psi}(s)|\bigr) \, ds \\
        %    & \leq \|\Pi_{T}\|_{L^2}\|\psi - \widetilde{\psi}\|_{L^2} + \Theta_T\|\psi - \widetilde{\psi}\|_{L^2}\bigl(\|\psi\|_{L^2} + \|\widetilde{\psi}\|_{L^2}\bigr)                              \\
                                               & \leq \bigl(\|\Pi_{T}\|_{L^2(0,T)} + 2\Theta_T \varepsilon\bigr)\|\psi - \widetilde{\psi}\|_{L^2(T-\rho,T)} .
    \end{align*}
    Taking $L^2$-norm yields
    \begin{align*}
        \|F\psi - F \widetilde{\psi}\|_{L^2(T-\rho,T)} \leq \sqrt{\rho}\bigl(\|\Pi_{T}\|_{L^2(0,T)} + 2 \Theta_{T}\varepsilon\bigr) \|\psi - \widetilde{\psi}\|_{L^2(T-\rho,T)}.
    \end{align*}
    Since $\mu$ has finite variation, $M_\rho$ is bounded and therefore $\sqrt{\rho}M_\rho \to 0$ as $\rho \downarrow 0$. Moreover, since $P_T \in L^1([0,T])$, we have $\|P_T\|_{L^1(T-\rho,T)} \to 0$ as $\rho \downarrow 0$. Thus, for any given $\varepsilon > 0$ we may find a sufficiently small $\rho > 0$ such that
    \begin{align*}
        \sqrt{\rho}\left(M_{\rho} +\|P_T\|_{L^1(T-\rho,T)} +  \|\Pi_{T}\|_{L^2(0,T)}\varepsilon + \Theta_T \varepsilon^2\right) & \leq \varepsilon \\
        \sqrt{\rho}\bigl(\|\Pi_{T}\|_{L^2(0,T)} + 2 \Theta_{T}\varepsilon\bigr)                                                 & < 1.
    \end{align*}
    This implies that $F$ maps $B_{\rho, \varepsilon}$ to itself and is a contraction there, so Banach's fixed-point theorem yields that $F$ has a unique fixed point $\psi \in B_{\rho, \varepsilon}$, which is a solution of \eqref{eq:def-basline-existence}.

    We now extend this to a unique noncontinuable solution. Define the set
    \begin{equation*}
        J = \left\{ \tau \in [0, T] : \eqref{eq:def-basline-existence} \text{ admits a solution } \psi \in L^2([\tau, T], \C^d) \text{ on } [\tau, T] \right\}.
    \end{equation*}
    We know $T \in J$, and if $0 \in J$ and $0 \leq S \leq T$, then $S \in J$. So $J$ is a non-empty interval. Moreover, $J$ is open on the set of intervals ending at $T$.

    Let $\tau \in J$ with $\tau > 0$, and let $\psi$ be the corresponding solution. For $t \in [0,\tau)$, the equation can be rewritten as
    \begin{equation*}
        \psi(t) = h(t) + \int_t^\tau p(t, s, \psi(s)) \, ds,
    \end{equation*}
    where
    \begin{equation*}
        h(t) = \mu([t,T]) + \int_\tau^T p(t, s, \psi(s)) \, ds,
    \end{equation*}
    lies in $L^2_{\text{loc}}([0,T])$.
    By what we already proved, there exists $\rho > 0$ such that the solution $\psi$ can be extended to $[\tau - \rho, T]$.
    Consequently, $J$ is of the form $(T_{\min}, T]$ for some $0 \leq T_{\min} < T$.
    It remains to argue uniqueness. Pick any $\tau \in [0, T]$ and let $\widetilde{\psi} \in L^2([\tau, T])$ be another solution to \eqref{eq:def-basline-existence} on $[\tau, T]$. Therefore, $\tau \in J$ and thus $T_{\min} < \tau$. Define
    \begin{equation*}
        S = \inf \{ s \in [\tau, T] : \tilde{\psi} = \psi \text{ a.e. on } [s, T] \}.
    \end{equation*}
    Then $\tilde{\psi} = \psi$ on $[S, T]$. If $S > \tau$, consider a small interval $[S-\rho, S]$.
    On this interval both functions satisfy the same equation with the same future term
    \begin{equation*}
        h(t) = \mu([t,T]) + \int_{S}^{T} p(t,s,\psi(s)) \, ds.
    \end{equation*}
    Repeating the contraction estimate we have for $\rho > 0$ on this interval
    \begin{equation*}
        \|\psi - \tilde{\psi}\|_{L^2([S-\rho, S])} \leq \kappa \|\psi - \tilde{\psi}\|_{L^2([S-\rho, S])}
    \end{equation*}
    for some $\kappa < 1$, provided that $\rho$ is sufficiently small.
    This implies $\psi = \tilde{\psi}$ on $[S-\rho, S]$, contradicting the definition of $S$. Thus, $S = \tau$ and the solution is unique.
\end{proof}

\section{Functional It\^o calculus}\label{app:functional_ito}

This appendix collects the basic definitions and results from the functional It\^o-calculus
used in the paper.
We follow \cite{cont_Functional_2013} and restrict attention to functionals that depend
only on the path and not explicitly on the quadratic variation.

\subsection{Non-anticipative functionals and stopped paths}

Let $X:[0,T] \times \Omega \to \R^d$ be a continuous semimartingale.
Although paths are almost surely continuous, c\`adl\`ag perturbations will be required for the functional It\^o formula,
so we work on the Skorokhod space $D([0,T],\R^d)$ in this section.
A non-anticipative functional is a family $F=(F_t)_{t \in [0,T]}$ with
$$F_t : D([0,t],\R^d) \to \R$$
measurable with respect to the canonical filtration.

For a path $x \in D([0,T],\R^d)$ we denote its value at time $t$ by $x(t)$ and its \emph{path} until $t$ by $x_t=(x(s) \colon 0 \leq s \leq t)\in D([0,t],\R^d)$. For $h \geq 0$, its \emph{horizontal} extension $x_{t,h} \in D([0,t+h],\R^d)$ is given as
$$ x_{t,h}(u) = x(u \wedge t), \quad u \in [0,t+h].$$

\subsection{Continuity}

For continuity, it is useful to consider $F$ on a single space, which will be the following \emph{space of stopped paths:}
\begin{align}
    \Upsilon([0,T], \R^d) = \big\{(t,x_{t,T-t}) \colon (t,x) \in [0,T] \times D([0,T],\R^d) \big\}.
\end{align}
Intuitively, two paths stopped at $t$ coincide before $t$ and are identified with the path stopped at $t$, i.e.\ $x_{t,T-t}$.
We  notice that non-anticipative functionals can be viewed as functionals on $\Upsilon([0,T], \R^d)$.
As a distance on $\Upsilon([0,T], \R^d)$ we consider
$$ d_\infty\big( (t,x),(t',x') \big) = |t-t'| + \sup_{u \in [0,T]} \| x_{t,T-t}(u) -x'_{t',T-t'}(u) \|. $$
With this metric $(\Upsilon([0,T], \R^d),d_\infty)$ is a closed subspace of $[0,T] \times D([0,T],\R^d)$ under the product topology.

Based on this metric, we can introduce a natural notion of continuity. More precisely, a non-anticipative functional $F$ on $\Upsilon([0,T], \R^d)$ is called \emph{continuous} at $(t,x) \in \Upsilon([0,T], \R^d)$ if for all $\varepsilon > 0$  there exists $\eta >0$ such that
for all  $(t',x') \in \Upsilon([0,T], \R^d)$ with $d_\infty((t,x),(t',x')) < \eta$, it holds that
$$ \big| F_t(x) - F_{t'}(x') \big| < \varepsilon. $$ We denote by $\C^{0,0}([0,T])$ the set of continuous non-anticipative functionals over $[0,T]$.

We will also need the following weaker notion of continuity: the non-anticipative functional $F$ is called \emph{continuous at fixed times} if for all $t\in[0,T)$, all $\varepsilon > 0$ and all $(t,x) \in \Upsilon([0,T], \R^d)$ there exists $\eta >0$ such that if $d_\infty((t,x),(t,x'))< \eta$ for $(t,x') \in \Upsilon([0,T], \R^d)$ implies that
$$ \big| F_t(x) - F_{t}(x') \big| < \varepsilon. $$

\subsection{Differentiability}

The \emph{horizontal derivative} at $x \in D([0,t],\R^d)$ of the non-anticipative functional $F$ at time $t$ is defined as
$$ \cD_t F(x) = \lim_{h \to 0+} \frac{F_{t+h}(x_{t,h}) - F_t(x_t)}{h}, $$
if the limit exists.

For $h \in \R^d$, we define its \emph{vertical} perturbation $x_t^h \in D([0,t],\R^d)$ by
$$ x_t^h(u) = x_t(u) + h \ind{u=t}, \quad u \in [0,t]$$
which is obtained by shifting the endpoint of $x_t$ by $h$.

The vertical derivative is a directional derivative. We therefore denote by $(e_i,i=1,\dots,d)$ the standard basis of $\R^d$. The \emph{vertical derivative} at $x \in D([0,t],\R^d)$ of the non-anticipative functional $F$ at time $t$ is component-wise, i.e.
\ $\nabla_x F_t(x) = (\partial_i F_t(x))_{i=1,\dots,d}$ where
$$\partial_i F_t(x) = \lim_{h \to 0} \frac{F_{t}(x_{t}^{h e_i}) - F_t(x_t)}{h}, \quad i=1,\dots,d. $$
If this derivative is defined for all $(t,x) \in \Upsilon$, $\nabla_x F=(\nabla_x F_t)_{t \in [0,T]}$ defines a non-anticipative functional, the \emph{vertical derivative} of $F$.
Note that the directional derivative involves evaluating $F$ at a c\`adl\`ag perturbation, even if $x$ itself is continuous, which explains why we have to consider the space $D([0,T],\R^d)$ instead of simply working on $C([0,T],\R^d)$.

Since the space $D([0,T],\R^d)$ is not separable under the sup-norm, we need the following additional regularity, even for the continuous functionals.

A non-anticipative functional $F$ is said to be boundedness preserving if for any compact $K \subset \R^d$ and $t_0 < T$, there exists a constant $C_{K,t_0} > 0$ such that for all $t_0 \leq t$, $x \in D([0,T], \R^d)$
\begin{equation*}
    x([0,t]) \subseteq K \Rightarrow |F_t(x)| \leq C_{K,t_0}.
\end{equation*}

We denote by $\C^{1,k}([0,T])$ the space of functionals $F \in \C^{0,0}([0,T])$
that are once horizontally and $k$ times vertically differentiable,
and by $\C^{1,k}_b([0,T])$ those whose derivatives are boundedness preserving.

\subsection{The functional It\^o formula}

With these preliminaries, we are able to state the functional It\^o-formula\footnote{Note that condition (10) in Theorem 4.1 of \cite{cont_Functional_2013} is not needed in our context, since the functionals we consider do not depend on the quadratic variation.} (see Theorem 4.1 in \cite{cont_Functional_2013}).
\begin{theorem}
    Let $F$ be a non-anticipative functional such that $F \in \C^{1,2}_b$. Then, for all $t \in [0,T)$,
    \begin{equation*}
        F_t(X_t) = F_0 (X_0) + \int_0^t \cD_u F(X_u) \, du + \int_0^t \nabla_x F_u(X_u) \, dX(u)  + \frac 1 2 \int_0^t \tr(\nabla^2_x F_u(X_u)) \, d\langle X \rangle(u), \quad \text{a.s.}
    \end{equation*}
\end{theorem}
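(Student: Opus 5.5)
The plan follows the discretization argument of \cite{cont_Functional_2013}, with the quadratic-variation dependence removed. Fix $t<T$ and a sequence of partitions $0=t_0^n<t_1^n<\dots<t_{k_n}^n=t$ with mesh tending to zero. To each path of $X$ attach the piecewise constant c\`adl\`ag approximation
\begin{equation*}
  {}_nX(u)=\sum_{i} X(t_{i+1}^n)\Ind_{[t_i^n,t_{i+1}^n)}(u)+X(t)\Ind_{\{u=t\}}.
\end{equation*}
Since $X$ is continuous, ${}_nX\to X$ uniformly on $[0,t]$ almost surely. This is exactly why the functionals must be defined on $D([0,T],\R^d)$ rather than on continuous paths. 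The goal is to write $F_t({}_nX_t)-F_0({}_nX_0)$ as a telescoping sum and identify each piece.

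Write $h_i=t_{i+1}^n-t_i^n$. Each increment is split as
\begin{equation*}
  F_{t_{i+1}}({}_nX_{t_{i+1}})-F_{t_i}({}_nX_{t_i})
  =\bigl[F_{t_{i+1}}({}_nX_{t_{i+1}})-F_{t_{i+1}}({}_nX_{t_i,h_i})\bigr]
  +\bigl[F_{t_{i+1}}({}_nX_{t_i,h_i})-F_{t_i}({}_nX_{t_i})\bigr].
\end{equation*}
The second bracket is a purely \emph{horizontal} increment, because the path is frozen on $[t_i,t_{i+1}]$. Using $F\in\C^{1,2}_b$ and continuity of $\cD F$ at fixed times, it equals $\int_{t_i}^{t_{i+1}}\cD_uF({}_nX_{t_i,u-t_i})\,du$, by the fundamental theorem of calculus applied to $u\mapsto F_u({}_nX_{t_i,u-t_i})$.

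The first bracket is a purely \emph{vertical} increment: only the endpoint moves, from $X(t_i)$ to $X(t_{i+1})$. Set $\phi(e)=F_{t_{i+1}}\bigl(({}_nX_{t_i,h_i})^{e-X(t_i)}\bigr)$ for $e\in\R^d$. This is a $C^2$ function with $\nabla\phi=\nabla_xF$ and $\nabla^2\phi=\nabla_x^2F$ at the perturbed path. Apply the classical It\^o formula to $\phi(X(s))$ for $s\in[t_i,t_{i+1}]$ to get
\begin{equation*}
  \int_{t_i}^{t_{i+1}}\nabla_xF\,dX+\tfrac12\int_{t_i}^{t_{i+1}}\tr\bigl(\nabla_x^2F\,d\langle X\rangle\bigr),
\end{equation*}
with the derivatives evaluated along the vertically perturbed frozen path.

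Summing over $i$ expresses $F_t({}_nX_t)-F_0({}_nX_0)$ as integrals of $\cD F$, $\nabla_xF$ and $\nabla_x^2F$ evaluated at approximating paths that converge to $X_u$ in $d_\infty$ for a.e.\ $u$. Passing to the limit $n\to\infty$ has several parts.
\begin{itemize}
  \item[(a)] The left-hand side converges to $F_t(X_t)-F_0(X_0)$ by left-continuity of $F$ in $d_\infty$.
  \item[(b)] The $du$ and $d\langle X\rangle$ integrals converge by dominated convergence. Continuity of $\cD F$ at fixed times and of $\nabla^2_xF$ gives pointwise convergence. The boundedness-preserving property gives a uniform bound, since all paths stay in a compact $K$ (the range of $X$ on $[0,t]$, slightly enlarged).
  \item[(c)] The stochastic integrals converge in probability by the dominated convergence theorem for stochastic integrals, with the same locally bounded dominating process. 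A localization by $\tau_K=\inf\{s:X(s)\notin K\}$ handles unboundedness.
\end{itemize}

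I expect the main obstacle to be the horizontal step and the justification of the limits, not the It\^o computation. Two issues need care. First, $\cD F$ is only assumed continuous at fixed times, so applying the fundamental theorem of calculus along the frozen path needs right-differentiability plus continuity of $u\mapsto F_u({}_nX_{t_i,u-t_i})$. For this I would invoke the standard lemma that a continuous function with continuous right derivative is $C^1$. Second, the integrands in the vertical step are evaluated at paths with a jump at the endpoint, so convergence to $\nabla_xF_u(X_u)$ uses left-continuity of $\nabla_xF$ and $\nabla_x^2F$ in $d_\infty$ together with the vanishing oscillation of $X$ on partition intervals. I would handle both exactly as in \cite[Theorem 4.1]{cont_Functional_2013}; since no dependence on $\langle X\rangle$ is present, their condition (10) is not needed.
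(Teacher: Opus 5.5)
Your overall strategy is the one the paper relies on. The paper gives no proof of its own: it cites \cite[Theorem 4.1]{cont_Functional_2013} and remarks that condition (10) there is vacuous without $\langle X\rangle$-dependence. Your discretization, horizontal/vertical splitting and limit passage is that proof. However, your vertical step does not work as written, and the failure is exactly where non-anticipation matters.

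With your choice ${}_nX=X(t_{i+1})$ on $[t_i,t_{i+1})$, the frozen path ${}_nX_{t_i,h_i}$ already has endpoint value $X(t_{i+1})$ at time $t_{i+1}$. By contrast, ${}_nX_{t_{i+1}}$ has endpoint value ${}_nX(t_{i+1})=X(t_{i+2})$ (for $t_{i+1}<t$). So your first bracket is a jump $X(t_{i+1})\to X(t_{i+2})$, not $X(t_i)\to X(t_{i+1})$.
\begin{itemize}
\item With your $\phi(e)=F_{t_{i+1}}\bigl(({}_nX_{t_i,h_i})^{e-X(t_i)}\bigr)$ one gets $\phi(X(t_{i+1}))\neq F_{t_{i+1}}({}_nX_{t_{i+1}})$, so the identity for that bracket is false.
\item This $\phi$ is built from ${}_nX_{t_i,h_i}$, which contains $X(t_{i+1})$. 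It is therefore not $\ccF_{t_i}$-measurable, and It\^o's formula for $\phi(X(s))$ on $[t_i,t_{i+1}]$ would have an anticipating integrand.
\item Switching to left-endpoint sampling (${}_nX=X(t_i)$ on $[t_i,t_{i+1})$) repairs the bookkeeping and the measurability. But then $\nabla_xF$ and $\nabla_x^2F$ are evaluated at time $t_{i+1}>s$ for $s\in[t_i,t_{i+1})$. That approaches $(s,X_s)$ from the right in time, and the left-continuity you invoke in the limit no longer suffices.
\end{itemize}

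The correct arrangement, as in Cont--Fourni\'e, keeps right-endpoint sampling and telescopes $F_{t_{i+1}}({}_nX_{t_{i+1}-})-F_{t_i}({}_nX_{t_i-})$. Here ${}_nX_{t_i-}$ denotes ${}_nX$ on $[0,t_i)$ with endpoint value $X(t_i)$. The vertical step is then done at the \emph{left} endpoint:
$F_{t_i}({}_nX_{t_i})-F_{t_i}({}_nX_{t_i-})=\phi(X(t_{i+1}))-\phi(X(t_i))$, with $\phi(e)=F_{t_i}\bigl(({}_nX_{t_i-})^{e-X(t_i)}\bigr)$.
The path ${}_nX_{t_i-}$ involves only $X(t_1),\dots,X(t_i)$, so $\phi$ is $\ccF_{t_i}$-measurable and It\^o's formula on $[t_i,t_{i+1}]$ is legitimate. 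The integrands sit at time $t_i\le s$, which is exactly what left-continuity of $\nabla_xF$ and $\nabla_x^2F$ controls. The horizontal piece $F_{t_{i+1}}({}_nX_{t_{i+1}-})-F_{t_i}({}_nX_{t_i})$ is exactly your second bracket, since ${}_nX_{t_{i+1}-}={}_nX_{t_i,h_i}$. Equivalently, in your own telescoping, the jump at $t_{i+1}$ must be generated by It\^o's formula over $[t_{i+1},t_{i+2}]$ with $\phi(e)=F_{t_{i+1}}\bigl(({}_nX_{t_i,h_i})^{e-X(t_{i+1})}\bigr)$.

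A smaller point on the horizontal step. Continuity of $\cD F$ at fixed times does not make $u\mapsto \cD_{t_i+u}F({}_nX_{t_i,u})$ continuous, because the time argument moves along the horizontal extension. So the lemma ``continuous right derivative implies $C^1$'' does not apply as stated. What does work is boundedness: this right derivative is bounded by the boundedness-preserving property on a compact range. Hence $u\mapsto F_{t_i+u}({}_nX_{t_i,u})$ is continuous with a bounded right derivative, therefore Lipschitz, and the fundamental theorem of calculus holds with the right derivative.
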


\newpage
\nocite{*}
\bibliographystyle{agsm}
\bibliography{reference.bib}
\end{document}